\def\ShowComments{}
\numberwithin{equation}{section}
\pgfplotsset{compat=newest}
\title{Regularization with Metric Double Integrals of Functions with Values in a Set of Vectors}
\author{René Ciak$^1$\\{\footnotesize\href{mailto:rene.ciak@univie.ac.at}{rene.ciak@univie.ac.at}}
	\and Melanie Melching$^1$\\{\footnotesize\href{mailto:melanie.hirzmann@univie.ac.at}{melanie.melching@univie.ac.at}}
	\and Otmar Scherzer$^{1,2}$\\{\footnotesize\href{mailto:otmar.scherzer@univie.ac.at}{otmar.scherzer@univie.ac.at}}}
\date{December 21, 2018}
\ifdefined\ShowTodos{}  
\newcommand{\todolit}[1][]{\ifthenelse{\isempty{#1}}{\todo[color=blue!20]{Citation}}{\todo[color=blue!20, inline]{#1}}}
\newenvironment{thm}{\begin{theorem}}{\end{theorem}}
\pgfplotsset{compat=newest}
\newcommand{\sphere}{\mathbb{S}^1} 
\newcommand{\stripe}{\mathcal{S}}
\newcommand{\stripet}{\mathcal{S}_{\tau}}
\newcommand{\Sc}{\mathcal{S}^c}
\NewDocumentCommand \Lp  { d<> d[] d[] }     { \IfNoValueTF{#3}
	{ \IfNoValueTF{#1} {L^p(\Omega, \IfNoValueTF{#2}{\R^M}{#2})} {L^{p_{#1}}(\Omega_{#1}, K_{#1})}  }
	{ \IfNoValueTF{#1} {L^p(\Omega, #3)} {L^{p_{#1}}(\Omega_{#1}, #3)}  }
}
\NewDocumentCommand \Wsp { d<> d[] d[] }     { \IfNoValueTF{#3}    
	{\IfNoValueTF{#1} {W^{s,p}(\Omega, \IfNoValueTF{#2}{\R^M}{#2})} {W^{s,p_{#1}}(\Omega_{#1}, K_{#1})}}
	{ \IfNoValueTF{#1} {W^{s,p}(\Omega, #3)} {W^{s,p_{#1}}(\Omega_{#1}, #3)}  } 
}
\NewDocumentCommand \W   { d<> d[] }     {  \IfNoValueTF{#1} {W(\Omega, \IfNoValueTF{#2}{\R^M}{#2})} {W(\Omega_{#1}, K_{#1})}  }
\NewDocumentCommand \BV  { d<> d[] }     {  \IfNoValueTF{#1} {BV(\Omega, \IfNoValueTF{#2}{\R^M}{#2})} {BV(\Omega_{#1}, K_{#1})}  }
\newcommand{\Reg}{\mathcal{R}}
\newcommand{\normN}[1][]{\ifthenelse{\isempty{#1}}{\|\cdot\|_{\R^N}}{\|#1\|_{\R^N}}}
\newcommand{\normM}[1][]{\ifthenelse{\isempty{#1}}{\|\cdot\|_{\R^M}}{\|#1\|_{\R^M}}}
\newcommand{\normMo}[1][]{\ifthenelse{\isempty{#1}}{\|\cdot\|_{\R^{M_1}}}{\|#1\|_{\R^{M_1}}}}
\NewDocumentCommand \metric   { d[] m }   { \mathrm{d}_{#2} \IfValueT{#1}{|_{#1 \times #1}} }
\NewDocumentCommand \dist     { d[] }     {  \metric[#1]{}          }
\NewDocumentCommand \dRM      { d[] }     {  \metric[#1]{\R^M}      }
\NewDocumentCommand \dRMi     { d[] }     {  \metric[#1]{\R^{M_i}}  }
\NewDocumentCommand \dRMo     { d[] }     {  \metric[#1]{\R^{M_1}}  }
\NewDocumentCommand \dRMt     { d[] }     {  \metric[#1]{\R^{M_2}}  }
\NewDocumentCommand \dK       { d[] }     {  \metric[#1]{K}         }
\NewDocumentCommand \dKo      { d[] }     {  \metric[#1]{1}         }
\NewDocumentCommand \dKt      { d[] }     {  \metric[#1]{2}         }
\NewDocumentCommand \dS       { d[] }     {  \metric[#1]{\sphere}   }
\NewDocumentCommand \mebr     { O{\cdot} O{\cdot}   }          { \llbracket #1, #2 \rrbracket }
\NewDocumentCommand \F    { s d<> d<> d<> d<> }  
{
	\IfBooleanTF #1
	{  \mathcal{F} \IfValueTF{#3}{ [#2,#3] }{ \IfValueT{#2}{[#2{\dist}_1,#2{\dist}_2]} }  }
	{  \mathcal{F} \IfValueT{#2}{^{#2}} \IfValueT{#3}{_{#3}}  \IfValueTF{#5}{ [#4,#5] }{ \IfValueT{#4}{[#4{\dist}_1,#4{\dist}_2]} }  }  
}
\NewDocumentCommand \FT   { d<> d<> }  {  \tilde{\mathcal{F}} \IfValueT{#1}{^{#1}} \IfValueT{#2}{_{#2}} }
\DeclareMathOperator{\argmin}{argmin}
\newcommand{\op}[1][]{\ifthenelse{\isempty{#1}}{\mathrm{F}}{\mathrm{F}( #1)}}
\newcommand{\dx}{\,\mathrm{d}x}
\newcommand{\dy}{\,\mathrm{d}y}
\newcommand{\dxy}{\,\mathrm{d}(x,y)}
\NewDocumentCommand \rarr  {d<>}   { \IfNoValueTF{#1}{\rightarrow}{\overset{{#1}}{\rightarrow}} }
\newcommand{\Wto}{\rarr<W>}
\newcommand{\defeq}{\vcentcolon=}
\newcommand{\eqdef}{=\vcentcolon}
\newcommand{\pinfty}{+\infty}
\newcommand{\etau}{\eta_{\tau}}
\NewDocumentCommand \seq      { m O{n} o D<>{\mathbb{N}} }     { (#1_{#2})_{\IfNoValueTF{#3}{#2}{#3} \in #4} }
\NewDocumentCommand \subseq   { m O{n} o D<>{\mathbb{N}} }     { (#1_{#2'})_{\IfNoValueTF{#3}{#2'}{#3'} \in #4'} }
\NewDocumentCommand \lime     { O{n} }                         {    \lim_{#1 \rightarrow \infty} }
\NewDocumentCommand \limi     { O{n} }                         { \liminf_{#1 \rightarrow \infty} }
\NewDocumentCommand \lims     { O{n} }                         { \limsup_{#1 \rightarrow \infty} }
\newtheorem{lemma}{Lemma}[section]
\newaliascnt{proposition}{lemma}
\newtheorem{proposition}[proposition]{Proposition}
\newaliascnt{example}{lemma}
\newtheorem{example}[example]{Example}
\newaliascnt{remark}{lemma}
\newtheorem{remark}[remark]{Remark}
\newaliascnt{notation}{lemma}
\newaliascnt{corollary}{lemma}
\newtheorem{corollary}[corollary]{Corollary}
\newaliascnt{theorem}{lemma}
\newtheorem{theorem}[theorem]{Theorem}
\newaliascnt{definition}{lemma}
\newtheorem{definition}[definition]{Definition}
\newaliascnt{assumption}{lemma}
\newtheorem{assumption}[assumption]{Assumption}
\theoremstyle{nonumberplain}
\newtheorem{proof}{Proof}
\titleformat{\section}[block]{\large\sc\filcenter}{\thesection.}{0.5ex}{}[]
\titleformat{\subsection}[runin]{\bf}{\thesubsection.}{0.5ex}{}[.]
\newcommand{\N}{\mathds{N}}
\newcommand{\R}{\mathds{R}}
\let\RE\Re
\let\Re=\undefined
\DeclareMathOperator{\Re}{\RE e}
\let\IM\Im
\let\Im=\undefined
\DeclareMathOperator{\Im}{\IM m}
\newcommand{\abs}[1]{\left|#1\right|}
\newcommand{\norm}[1]{\left\|#1\right\|}
\newcommand{\set}[1]{\left\{#1\right\}}
\newcommand{\e}{\mathrm e}
\let\ii\i
\renewcommand{\i}{\mathrm i}
\renewcommand{\d}{\,\mathrm d}
\ifdefined\ShowComments{}
  \newcommand{\commentO}[1]{\textcolor{red}{#1}}
  \newcommand{\commentM}[1]{\textcolor{magenta}{M: #1}}
  \newcommand{\commentR}[1]{\textcolor{olive}{R: #1}}
  \newcommand{\commentO}[1]{}
  \newcommand{\commentM}[1]{}
  \newcommand{\commentR}[1]{}
\newcommand{\ve}{\varepsilon}
\begin{document}
\renewcommand{\sectionautorefname}{Section}
\renewcommand{\subsectionautorefname}{Subsection}
\maketitle
\thispagestyle{empty}
\begin{center}
\hspace*{5em}
\parbox[t]{12em}{\footnotesize
\hspace*{-1ex}$^1$Computational Science Center\\
University of Vienna\\
Oskar-Morgenstern-Platz 1\\
A-1090 Vienna, Austria}
\hfil
\parbox[t]{17em}{\footnotesize
\hspace*{-1ex}$^2$Johann Radon Institute for Computational\\
\hspace*{1em}and Applied Mathematics (RICAM)\\
Altenbergerstraße 69\\
A-4040 Linz, Austria}
\end{center}

  \ifdefined\ShowTableOfContents{}  \tableofcontents  \fi

  \begin{abstract}
	We present an approach for variational regularization of inverse and imaging problems for recovering 
	functions with values in a set of vectors.
	We introduce regularization functionals, which are derivative-free double integrals of such functions. These 
	regularization functionals are motivated from double integrals, which approximate Sobolev semi-norms of 
	intensity functions. These were introduced in  
	Bourgain, Brézis \& Mironescu, \emph{``Another Look at Sobolev Spaces''.} In: \emph{Optimal Control and
        Partial Differential Equations}-Innovations \& Applications, IOS press, Amsterdam, 2001.
	For the proposed regularization functionals we prove 
	existence of minimizers as well as a stability and convergence result for functions with values in a set of vectors.
\end{abstract}

\section{Introduction}
Functions with values in a (nonlinear) subset of a vector space appear in several applications of imaging and in inverse problems, e.g.

\begin{itemize}
 \item \emph{Interferometric Synthetic Aperture Radar (InSAR)} 
       is a technique used in remote sensing and geodesy to generate for example digital elevation 
       maps of the earth's surface. 
       InSAR images represent phase differences of waves between two or more SAR images, cf. \cite{LiuMas16,RocPraFer97}. 
       Therefore InSAR data are functions $f:\Omega \to \sphere \subseteq \R^2$.
       The pointwise function values are on the $\sphere$, which is considered embedded into $\R^2$.
 \item A \emph{color image} can be represented as a function in \emph{HSV}-space (hue, saturation, value) (see e.g. \cite{PlaVen00}).
       Color images are then described as functions $f:\Omega \to K \subseteq \R^3$. Here $\Omega$ is a plane in $\R^2$, the image domain,
       and $K$ (representing the HSV-space) is a cone in 3-dimensional space $\R^3$. 
 \item Estimation of the \emph{foliage angle distribution} has been considered for instance in
       \cite{HelAndRobFin15,PutBriManWiePfeZliPfe16}.
       Thereby the imaging function is from $\Omega \subset \R^2$, a part of the Earth's surface, 
       into $\mathbb{S}^2 \subseteq \R^3$, representing foliage angle orientation.
 \item Estimation of functions with values in $SO(3) \subseteq \R^{3 \times 3}$. Such problems appear in \emph{Cryo-Electron Microscopy} 
       (see for instance \cite{HadSin11,SinShk12,WanSinWen13}).
\end{itemize}
We emphasize that we are analyzing \emph{vector}, \emph{matrix}, \emph{tensor}-valued functions, where pointwise function 
evaluations belong to some given (sub)set, but are always \emph{elements} of the underlying vector space. This should not be confused with set-valued functions, where every function 
evaluation can be a set.

Inverse problems and imaging tasks, such as the ones mentioned above, might be unstable, or even worse, the solution could be 
ambiguous. Therefore, numerical algorithms for imaging need to be \emph{regularizing} to obtain approximations of the 
desired solution in a stable manner. 
Consider the operator equation 
\begin{equation}\label{eq:basic_problem}
\op[w] = v^0,
\end{equation} 
where we assume that only (noisy) measurement data $v^\delta$ of $v^0$ become available. 
In this paper the method of choice is \emph{variational regularization} which consists 
in calculating a minimizer of the variational regularization functional
\begin{equation}
 \label{eq:energy}
 \mathcal{F}(w) \defeq \mathcal{D}(\op[w],v^\delta) + \alpha \mathcal{R}(w).
\end{equation}
Here
\begin{description}
 \item{$w$} is an element of the \emph{set} of admissible functions.
 \item{$\op$} is an operator modeling the image formation process (except the noise).
 \item{$\mathcal{D}$} is called the \textit{data} or \textit{fidelity term}, which is used to compare a pair 
       of data in the image domain, that is to quantify the difference of the two data sets. 
 \item{$\mathcal{R}$} is called \textit{regularization functional}, which is used to impose certain 
       properties onto a minimizer of the regularization functional $\mathcal{F}$.
 \item{$\alpha > 0$} is called \textit{regularization parameter} and provides a trade off between stability 
       and approximation properties of the minimizer of the regularization functional $\mathcal{F}$.  
 \item{$v^\delta$} denotes measurement data, which we consider noisy. 
 \item{$v^0$} denotes the exact data, which we assume to be not necessarily available.
\end{description}

The main objective of this paper is to introduce a general class of regularization functionals for functions with values in a set of vectors. 
In order to motivate our proposed class of regularization functionals we review a class of regularization functionals 
appropriate for analyzing \emph{intensity data}.

\subsection*{Variational regularization for reconstruction of intensity data}
Opposite to what we consider in the present paper, most commonly, imaging data $v$ and admissible functions $w$, respectively, are 
considered to be representable as \emph{intensity functions}. That is, they are functions from some subset $\Omega$ of an Euclidean 
space with \emph{real values}.

In such a situation the most widely used regularization functionals use regularization terms 
consisting of powers of Sobolev (see \cite{BouSau93,ChaLio97,CimMel12}) or total variation semi-norms \cite{RudOshFat92}. 
It is common to speak about \emph{Tikhonov regularization} (see for instance \cite{TikArs77}) when the data term and the 
regularization functional are squared Hilbert space norms, respectively. 
For the \emph{Rudin, Osher, Fatemi (ROF)} regularization \cite{RudOshFat92}, also known as total variation 
regularization, the data term is the squared $L^2$-norm and $\mathcal{R}(w) = |w|_{TV}$ is the total variation semi-norm. 
Nonlocal regularization operators based on the generalized nonlocal gradient is used in \cite{GilOsh08}. \\  
Other widely used regularization functionals are \emph{sparsity promoting} \cite{DauDefDem04,KolLasNiiSil12},
\emph{Besov space norms} \cite{LorTre08,LasSak09} and anisotropic regularization norms \cite{OshEse04,SchWei00}.
Aside from various regularization terms there also have been proposed different fidelity terms other than quadratic norm
fidelities, like the $p$-th powers of $\ell^p$ and $L^p$-norms of the differences of 
$F(w)$ and $v$ , \cite{SchGraGroHalLen09,SchuKalHofKaz12}, Maximum Entropy \cite{Egg93,EngLan93} and Kullback-Leibler divergence 
\cite{ResAnd07} (see \cite{Poe08} for some reference work).

Our work utilizes results from the seminal paper of \citeauthor{BouBreMir01} \cite{BouBreMir01}, which 
provides an equivalent \emph{derivative-free} characterization of Sobolev spaces and the space $\BV$, 
the space of functions of bounded total variation, which consequently, in this context, was analyzed in \citeauthor{Dav02} 
and \citeauthor{Pon04b} \cite{Dav02,Pon04b}, respectively. It is shown in 
\cite[Theorems 2 \& 3']{BouBreMir01} and \cite[Theorem 1]{Dav02} that when $(\rho_\ve)_{\ve > 0}$ 
is a suitable sequence of non-negative, radially symmetric, radially decreasing mollifiers, then
\begin{equation}
\label{eq:double_integral}
\begin{aligned}
\lim_{\ve \searrow 0} \tilde{\Reg}_\ve(w) & 
\defeq \lim_{\ve \searrow 0}  \int\limits_{\Omega\times \Omega} \frac{\|w(x)- w(y)\|_{\R}^p}{\normN[x-y]^{p}} \rho_\ve(x-y) \dxy\\
&= 
\begin{cases}
C_{p,N}|w|^p_{W^{1,p}} & \mbox{if } w \in W^{1,p}(\Omega, \R), \ 1 < p < \infty, \\
C_{1,N}|w|_{TV} & \mbox{if } w \in \BV[\R], \ p = 1, \\
\infty & \mbox{otherwise},
\end{cases}
\end{aligned}
\end{equation}
Hence $\tilde{\Reg}_\ve$ approximates powers of Sobolev semi-norms and the total variation semi-norm, respectively. Variational 
imaging, consisting in minimization of $\mathcal{F}$ from \autoref{eq:energy} with $\Reg$ replaced by 
$\tilde{\Reg}_\ve$,  has been considered in \cite{AubKor09,BouElbPonSch11}.

\subsection*{Regularization of functions with values in a set of vectors}
In this paper we generalize the derivative-free characterization of Sobolev spaces and functions of bounded variation to 
functions, $u:\Omega \to K$, where $K$ is some set of vectors, and use these functionals for variational 
regularization. The applications we have in mind contain that $K$ is a closed subset of $\R^M$ (for instance HSV-data) 
with non-zero measure, or that $K$ is a sub-manifold (such as for instance InSAR-data).

The reconstruction of manifold--valued data with variational regularization methods has already been subject 
to intensive research (see for instance \cite{KimSoc02,CreStr11,CreStr13,CreKoeLelStr13,BacBerSteWei16,WeiDemSto14}).
The variational approaches mentioned above use regularization and fidelity functionals based on 
Sobolev and TV semi-norms: a total variation regularizer 
for cyclic data on $\sphere$ was introduced in \cite{CreStr13,CreStr11}, see also \cite{BerLauSteWei14,BerWei16,BerWei15}.
In \cite{BacBerSteWei16,BerFitPerSte17} combined first and second order 
differences and derivatives were used for regularization to restore manifold--valued data. The later mentioned papers, however, 
are formulated in a finite dimensional setting, opposed to ours, which is considered in an infinite dimensional setting. 
Algorithms for total variation minimization problems, including half-quadratic minimization and non-local patch based methods, 
are given for example in \cite{BacBerSteWei16,BerChaHiePerSte16,BerPerSte16} as well as in \cite{GroSpr14,LauNikPerSte17}.
On the theoretical side the total variation of functions with values in a manifold was investigated by \citeauthor{GiaMuc07} 
using the theory of Cartesian currents in \cite{GiaMuc07,GiaMuc06}, and earlier \cite{GiaModSou93} if the manifold is a 
$\sphere$.

\subsection*{The contents and the particular achievements of the paper are as follows}
The contribution of this paper is to introduce and analytically analyze double integral 
regularization functionals for reconstructing functions with values in a set of vectors, generalizing functionals 
of the form \autoref{eq:double_integral}. 
Moreover, we develop and analyze fidelity terms for comparing manifold--valued data. 
Summing these two terms provides a new class of regularization functionals of the form 
\autoref{eq:energy} for reconstructing manifold--valued data. 

When analyzing our functionals we encounter several differences to existing 
regularization theory (compare \autoref{sec: Setting}):
\begin{enumerate}
\item 
      The \emph{admissible functions}, where we minimize the regularization functional on, 
      do form only a \emph{set} but \emph{not} a \emph{linear} space.
      As a consequence, well--posedness 
      of the variational method (that is, existence of a minimizer of the energy functional) cannot directly be proven 
      by applying standard 
      direct methods in the Calculus of Variations \cite{Dac82,Dac89}. 
\item The regularization functionals are defined via metrics and not norms, see \autoref{sec: Existence}. 
\item In general, the fidelity terms are \emph{non-convex}.
      Stability and convergence results are proven in \autoref{sec: Stability_and_Convergence}. 
\end{enumerate}      

The model is validated in \autoref{sec:Numerical_results} where we present numerical results for denoising 
and inpainting of data of InSAR type.

\section{Setting} \label{sec: Setting}
In the following we introduce the basic notation and the set of admissible functions which we are regularizing 
on.

\begin{assumption}
 \label{ass:1}
 All along this paper we assume that
 \begin{itemize}
  \item $p_1, p_2 \in [1, \pinfty)$, $s \in (0,1]$,
  \item $\Omega_1, \Omega_2 \subseteq \R^N$ are nonempty, bounded, and connected open sets with Lipschitz boundary, respectively,
  \item $k \in [0,N]$,
  \item $K_1 \subseteq \R^{M_1}, K_2 \subseteq \R^{M_2}$ are nonempty and closed subsets of $\R^{M_1}$ and $\R^{M_2}$, respectively.       
 \end{itemize}
 Moreover, 
 \begin{itemize}
  \item $\normN$ and $\|\cdot\|_{\R^{M_i}}, \ i=1,2,$ are the Euclidean norms on $\R^N$ and $\R^{M_i}$, respectively.
  \item $\dRMi: \R^{M_i} \times \R^{M_i} \rarr [0, \pinfty)$ denotes the Euclidean distance on $\R^{M_i}$ for $i=1,2$ and 
  \item $\d_i \defeq \mathrm{d}_{K_i}: K_i \times K_i \rarr [0, \pinfty)$ 
          denote arbitrary metrics on $K_i$, which fulfill for $i=1$ and $i=2$
          \begin{itemize}
          	\item $\dRMi|_{K_i \times K_i} \leq d_i$,
          	\item $\d_i$ is continuous with respect to $\dRMi|_{K_i \times K_i}$, meaning that for a sequence $\seq{a}$ in $K_i \subseteq \R^{M_i}$  converging to some $a \in K_i$ we also have $\d_i(a_n,a) \rarr 0$. 
          \end{itemize} 
          In particular, this assumption is valid if the metric $d_i$ is equivalent to $\dRMi|_{K_i \times K_i}$.
          When the set $K_i, \ i=1,2$, is a suitable complete submanifold of $\R^{M_i}$,
          it seems natural to choose $d_i$ as the geodesic distance on the respective submanifolds.
  \item  $(\rho_{\ve})_{\ve > 0}$ is a Dirac family of non-negative, radially symmetric mollifiers, i.e. for every $\ve > 0$ we have
    \begin{enumerate}
      \item $\rho_\ve \in \mathcal{C}^{\infty}_{c}(\R^N, \R)$ is radially symmetric,
      \item $\rho_\ve \geq 0$,
      \item $\int \limits_{\R^N} \rho_\ve (x) \dx = 1$, and 
      \item for all $\delta > 0$, $\lim_{\ve \searrow 0}\limits \int_{\set{\normN[y] > \delta}} \rho_\ve(y) \dy = 0$.
    \end{enumerate} 
    We demand further that, for every $\ve > 0$, 
    \begin{enumerate}[resume]
      \item there exists a  $\tau > 0$ and $\etau > 0$ such that
        $\{ z \in \R^N : \rho_{\ve}(z) \geq \tau \}= \{z \in \R^N : \normN[z] \leq \etau \}$.
    \end{enumerate}
    This condition holds, e.g., if $\rho_{\ve}$ is a radially decreasing continuous function with $\rho_{\ve}(0) > 0$.
  \item When we write $p$, $\Omega$, $K$, $M$, then we mean $p_i$, $\Omega_i$, $K_i$, $M_i$, for either 
        $i=1,2$. In the following we will often omit the subscript indices whenever possible.
  \end{itemize}
\end{assumption}

\begin{example}\label{ex:mol}
Let $\hat{\rho} \in C_c^\infty(\R,\R_+)$ be symmetric at $0$, monotonically decreasing on $[0, \infty)$ and satisfy
\begin{equation*}
 \abs{\mathbb{S}^{N-1}}\int_0^\infty \hat{t}^{N-1} \hat{\rho}\left(\hat{t}\right)d \hat{t} = 1\;.
\end{equation*}
Defining mappings $\rho_\ve: \R^N \to \R$ by
\begin{equation*}
  \rho_\ve(x)\defeq \frac{1}{\ve^N} \hat{\rho}\left(\frac{\normN[x]}{\ve}\right)
\end{equation*}
constitutes then a family $(\rho_\ve)_{\ve > 0}$ which fulfills the above properties \it{(i) -- (v)}.
Note here that
\begin{itemize}
 \item by substitution $x = t \theta$ with $t > 0, \theta \in \mathbb{S}^{N-1}$ and $\hat{t}=\frac{t}{\ve}$, 
\begin{equation}
 \label{eq:molII}
 \begin{aligned}
  \int_{\R^N} \rho_\ve(x)\, \dx &= \frac{1}{\ve^N} \int_{\R^N} \hat{\rho}\left(\frac{\normN[x]}{\ve}\right) \dx \\
  &= \frac{1}{\ve^N} \int_0^\infty t^{N-1} \hat{\rho}\left(\frac{t}{\ve}\right)\d t \int_{\mathbb{S}^{N-1}} \d\theta  \\ 
  &= \abs{\mathbb{S}^{N-1}}\int_0^\infty \hat{t}^{N-1} \hat{\rho}\left(\hat{t}\right)\d \hat{t} = 1\;.
 \end{aligned}
\end{equation}
Here, $d\theta$ refers to the canonical spherical measure.
 \item Again by the same substitutions, taking into account that $\hat{\rho}$ has compact support, it follows for 
       $\ve > 0$ sufficiently small that
\begin{equation}
 \label{eq:molIIa}
 \begin{aligned}
  \int_{\set{y:\normN[y]>\delta}} \rho_\ve(x)\, \dx 
  &= \frac{1}{\ve^N} \int_{\set{y:\normN[y]> \delta}} \hat{\rho}\left(\frac{\normN[x]}{\ve}\right) \dx \\
  &= \frac{1}{\ve^N} \int_\delta^\infty t^{N-1} \hat{\rho}\left(\frac{t}{\ve}\right)\d t \int_{\mathbb{S}^{N-1}} \d\theta  \\ 
  &= \abs{\mathbb{S}^{N-1}}\int_{\delta/\ve}^\infty \hat{t}^{N-1} \hat{\rho}\left(\hat{t}\right)\d \hat{t} =0 \;.
 \end{aligned}
\end{equation}
\end{itemize}
\end{example}
In the following we write down the basic spaces and sets, which will be used in the course of the paper.
\begin{definition}
\begin{itemize}
 \item The \emph{Lebesgue--Bochner space} of $\R^M$--valued functions on $\Omega$ consists of the set 
  \begin{equation*}
  \begin{aligned}
    \Lp \defeq \{ \phi : \Omega \to \R^M : {} & \phi \text{ is Lebesgue-Borel measurable and }  \\ 
                                           &\normM[\phi(\cdot)]^p: \Omega \to \R \text{ is Lebesgue--integrable on } \Omega \},
   \end{aligned}
  \end{equation*}
  which is associated with the norm $\|\cdot\|_{\Lp}$, given by
  \begin{gather*}
    \norm{\phi}_{\Lp} \defeq \Big( \int_{\Omega}\limits \normM[\phi(x)]^p \dx \Big)^{1/p} \; .
  \end{gather*}
 \item Let $0 < s < 1$. Then the \emph{fractional Sobolev space} of order $s$ can be defined (cf. \cite{Ada75}) as the set
\begin{gather*}
  \Wsp \defeq \left\{ w \in \Lp : \frac{\normM[w(x) - w(y)]}{\normN[x-y]^{\frac{N}{p}+s}} \in 
  L^p (\Omega \times \Omega, \R)  \right\} \\
  = \{w \in \Lp : \abs{w}_{\Wsp} < \infty \},
\end{gather*}
equipped with the norm
\begin{equation}\label{eq:sobolev_norm}
 \|\cdot\|_{\Wsp} \defeq \big(\|\cdot\|_{\Lp}^{p} + \abs{\cdot}_{\Wsp}^p \big)^{1/p},
\end{equation}
where $\abs{\cdot}_{\Wsp}$ is the semi-norm for $\Wsp$, given by
\begin{equation}\label{eq:sobolev_semi_norm}
  \abs{w}_{\Wsp} \defeq  \Big(\int\limits_{\Omega\times \Omega} \frac{\normM[w(x) - w(y)]^p}{\normN[x-y]^{N+ps}} \dxy \Big)^{1/p},
  \quad w \in \Wsp\;.
\end{equation}
\item 
For $s = 1$ the Sobolev space $W^{1,p}(\Omega, \R^M)$ consists of all weakly differentiable
functions in $L^1(\Omega,\R^M)$ for which 
\begin{equation*}
\norm{w}_{W^{1,p}(\Omega, \R^M)} 
  \defeq \Big( \norm{w}_{\Lp}^p +  \int_{\Omega}\limits \|\nabla w(x)\|^p_{\mathbb{R}^{M\times N}} \dx \Big)^{1/p} < \infty\;,
\end{equation*}
where $\nabla w$ is the weak Jacobian of $w$.
\item Moreover, we recall one possible definition of the space $\BV$ from \cite{AmbFusPal00}, which consists of all 
      Lebesgue--Borel measurable functions $w:\Omega \to \R^M$ for which 
\begin{gather*}
\norm{w}_{\BV} \defeq \norm{w}_{L^1(\Omega, \R^M)} + \abs{w}_{\BV} < \infty,
\end{gather*}
where 
\begin{equation*}
\begin{aligned}
~ & \abs{w}_{\BV} \\
\defeq {}& \sup \left\{ \int \limits_\Omega w(x) \cdot \mathrm{Div} \varphi(x) \dx : \ 
\varphi \in C_c^1(\Omega, \R^{M \times N}) ~ \mathrm{ such~that }  \norm{\varphi}_\infty \defeq \mathop{\mathrm{ess~sup}}_{x \in \Omega} \norm{\varphi(x)}_F \leq 1 \right\},
\end{aligned}
\end{equation*}
where $\norm{\varphi(x)}_F$ is the Frobenius-norm of the matrix $\varphi(x)$ and 
$\mathrm{Div}\varphi = (\mathrm{div} \varphi_1, \dots, \mathrm{div} \varphi_M)^\mathrm{T}$ denotes the row--wise formed divergence of $\varphi$.
\end{itemize}
\end{definition}

\begin{lemma}
\label{le:inclusion}
 Let $0 < s \leq 1$ and $p \in [1,\infty)$, then $\Wsp \hookrightarrow \Lp$ and the embedding is compact. Moreover, the embedding
 $\BV \hookrightarrow \Lp$ is compact for all 
   \begin{gather*}
    1 \leq p < 1^* \defeq
    \begin{cases}
      \pinfty &\mbox{if } N = 1 \\
      \frac{N}{N-1} &\mbox{otherwise }
    \end{cases} \,.
  \end{gather*} 
\end{lemma}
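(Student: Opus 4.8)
The plan is to reduce both statements to the scalar-valued case and then to apply the classical compactness criteria in $L^p$ (Riesz--Fr\'echet--Kolmogorov, equivalently an approximation-by-mollification argument). Since $M$ is finite and all norms on $\R^M$ are equivalent, a function $w=(w_1,\dots,w_M)$ belongs to $\Wsp$ (resp.\ $\BV$) if and only if each component $w_j$ belongs to $W^{s,p}(\Omega,\R)$ (resp.\ $BV(\Omega,\R)$), with comparable (semi)norms; in particular $\normM[w(x)-w(y)]^p \asymp \sum_{j=1}^{M}\abs{w_j(x)-w_j(y)}^p$. Hence a bounded sequence in $\Wsp$ has, for each fixed $j\in\set{1,\dots,M}$, a bounded sequence of $j$-th components in $W^{s,p}(\Omega,\R)$; extracting an $L^p$-convergent subsequence one index at a time (only $M$ extractions are needed) yields a subsequence converging in $\Lp$. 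The identical componentwise extraction reduces the $\BV$ statement to the scalar one, so it suffices to treat real-valued functions.

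For the scalar fractional embedding with $0<s<1$ I would first use the Lipschitz boundary hypothesis of \autoref{ass:1} to fix a bounded linear extension operator $E\colon W^{s,p}(\Omega,\R)\to W^{s,p}(\R^N,\R)$ and, after multiplication by a fixed cut-off, assume all extended functions supported in one common bounded set. On $\R^N$ the substitution $y=x+h$ rewrites the Gagliardo seminorm as
\begin{equation*}
 \abs{Ew}_{W^{s,p}}^p = \int_{\R^N} \frac{\norm{Ew(\cdot+h)-Ew}_{L^p}^p}{\normN[h]^{N+sp}}\,\mathrm{d}h .
\end{equation*}
Mollifying with a standard mollifier $\phi_\delta$ and estimating $\norm{Ew*\phi_\delta-Ew}_{L^p}\le\int\phi_\delta(h)\,\norm{Ew(\cdot+h)-Ew}_{L^p}\,\mathrm{d}h$ by H\"older's inequality against the weight $\normN[h]^{N+sp}$ gives, after the scaling $h=\delta u$, the \emph{uniform} bound $\norm{Ew*\phi_\delta-Ew}_{L^p}\le C\,\delta^{s}\,\abs{Ew}_{W^{s,p}}$ (for $p=1$ one uses the $L^\infty$--$L^1$ endpoint instead). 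For each fixed $\delta$ the family $\set{Ew*\phi_\delta}$ is uniformly bounded and equi-Lipschitz, hence precompact in $C$ and so in $L^p$ on the common support by Arzel\`a--Ascoli; being uniformly $O(\delta^s)$-close to the original family, the latter is therefore totally bounded, hence precompact, in $L^p(\R^N,\R)$, and restriction to $\Omega$ gives the compact embedding. The case $s=1$ is the same but easier: the elementary estimate $\norm{Ew(\cdot+h)-Ew}_{L^p}\le\normN[h]\,\norm{\nabla Ew}_{L^p}$ directly gives the equicontinuity of translations required for precompactness.

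For the $\BV$ embedding I would extend $w$ to $BV(\R^N,\R)$, again via the Lipschitz boundary, and use the translation estimate $\norm{w(\cdot+h)-w}_{L^1}\le\normN[h]\,\abs{w}_{BV}$, obtained by approximating $w$ by smooth functions in the strict sense. A set bounded in $\BV$ is then bounded in $L^1$ with uniformly equicontinuous translations, hence precompact in $L^1(\Omega,\R)$ by Riesz--Fr\'echet--Kolmogorov. The Sobolev inequality for $BV$ furnishes a uniform bound in $L^{1^*}(\Omega,\R)$, and the interpolation inequality $\norm{\cdot}_{L^p}\le\norm{\cdot}_{L^1}^{\theta}\,\norm{\cdot}_{L^{1^*}}^{1-\theta}$ with $\tfrac1p=\theta+\tfrac{1-\theta}{1^*}$ promotes any $L^1$-convergent subsequence to an $L^p$-convergent one for every $1\le p<1^*$.

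The main obstacle is the \emph{uniformity over the entire bounded set} of the translation modulus: on the $W^{s,p}$ side only the integral characterization of the seminorm is at hand, not a pointwise-in-$h$ bound, which is why one must pass through the mollification-and-H\"older computation above to convert the integral control into a uniform $O(\delta^s)$ approximation. The second delicate point is the existence and boundedness of the extension operators on a general Lipschitz domain; this is exactly where the boundary-regularity assumption of \autoref{ass:1} enters, since without it the translation estimates fail near $\partial\Omega$. The remaining ingredients (Arzel\`a--Ascoli, the Sobolev inequality for $BV$, and the interpolation step) are routine.
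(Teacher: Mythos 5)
Your argument is correct, but it is worth noting that the paper itself does not prove this lemma at all: its ``proof'' consists of three citations (Demengel--Demengel for the compact embedding $\Wsp\hookrightarrow\Lp$ with $0<s<1$, Evans for $s=1$, and Ambrosio--Fusco--Pallara for the $BV$ statement). What you have written is essentially a self-contained reconstruction of the standard proofs behind those citations: the componentwise reduction to scalar functions is sound (equivalence of norms on $\R^M$ plus finitely many successive extractions), the mollification-and-H\"older computation correctly converts the integral Gagliardo control into the uniform bound $\norm{Ew*\phi_\delta-Ew}_{L^p}\le C\delta^s\abs{Ew}_{W^{s,p}}$ (the scaling exponent $-N+(N+sp)/p+N/p'=s$ checks out, including the $L^\infty$--$L^1$ endpoint for $p=1$), and the Riesz--Fr\'echet--Kolmogorov/Arzel\`a--Ascoli conclusion together with the $BV$ translation estimate and the $L^1$--$L^{1^*}$ interpolation for $1\le p<1^*$ is the textbook route. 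The only ingredients you still take off the shelf are the bounded extension operators $W^{s,p}(\Omega)\to W^{s,p}(\R^N)$ and $BV(\Omega)\to BV(\R^N)$ on Lipschitz domains and the Sobolev inequality for $BV$; these are exactly as deep as the embedding theorems the paper cites, so your proof does not circumvent the literature so much as unpack it. The trade-off is transparency versus length: the paper's citation is adequate for a standard fact, while your version makes explicit where the Lipschitz-boundary hypothesis of \autoref{ass:1} is actually used, which the paper's one-line proof hides.
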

\begin{proof}
 The first result can be found in \cite{DemDem07} for $0 < s < 1$ and in \cite{Eva10} for $s = 1$. The second assertion 
 is stated in \cite{AmbFusPal00}.
\end{proof}

\begin{remark} 
\label{re:notes_basic}
Let \autoref{ass:1} hold.
  We recall some basic properties of weak convergence in $\Wsp$, $W^{1,p}(\Omega, \R^M)$ and weak* convergence in $\BV$ (see for instance \cite{Ada75,AmbFusPal00}) :
  \begin{itemize}
    \item Let $p > 1$, $s\in(0,1]$ and assume that $(w_n)_{n \in \N}$ is bounded in $\Wsp$. 
          Then there exists a subsequence $(w_{n_k})_{k \in \N}$ which converges weakly in $\Wsp$. 
    \item Assume that $(w_n)_{n \in \N}$ is bounded in $\BV$. Then there exists a subsequence $(w_{n_k})_{k \in \N}$ 
          which converges weakly* in $\BV$. 
  \end{itemize}
\end{remark}

Before introducing the regularization functional, which we investigate theoretically and numerically, we give 
the definition of some sets of (equivalence classes of) admissible functions.
\begin{definition} \label{def:spaces_etc}
\label{de:basic}
    For $0 < s \leq 1$, $p \geq 1$ and a nonempty closed subset $K \subseteq \R^M$ we define 
    \begin{equation}
    \label{eq:spacess_K}
    \begin{aligned}
      \Lp[K]  \defeq {} & \{\phi \in \Lp : \phi(x) \in K \text{ for a.e. } x \in \Omega\}; \\
      \Wsp[K] \defeq {} & \{w \in \Wsp: w(x) \in K \text{ for a.e. } x \in \Omega  \}, \\
      \BV[K]  \defeq {} & \{w \in \BV: w(x) \in K \text{ for a.e. } x \in \Omega  \}.
    \end{aligned}
    \end{equation}
    and equip each of these (in general nonlinear) sets with some subspace topology:
    \begin{itemize}
    \item $\Lp[K] \subseteq \Lp$ is associated with the strong $\Lp$-topology,
    \item $\Wsp[K] \subseteq \Wsp$ is associated with the weak $\Wsp$-topology, and 
    \item $\BV[K] \subseteq \BV$ is associated with the weak* $\BV$-topology.
    \end{itemize}

    Moreover, we define 
      \begin{equation} \label{eq:ChooseW}
      W(\Omega,K) \defeq 
        \begin{cases}
          \Wsp[K] & \text{ for } p \in (1, \infty) \text { and } s \in (0,1], \\
          \BV[K]  & \text{ for } p = 1             \text { and } s = 1\;.
        \end{cases}
      \end{equation}
      Consistently, $W(\Omega,K)$ 
      \begin{itemize}
       \item is associated with the weak $\Wsp$-topology in the case $p \in (1, \infty)$ and $s \in (0,1]$ and
       \item with the weak* $\BV$-topology when $p=1$ and $s=1$.
      \end{itemize}
      When we speak about 
      \begin{equation*}
      \text{ convergence on } W(\Omega,K) \text{ we write } \overset{W(\Omega, K)}{\longrightarrow} \text{ or simply} \Wto
      \end{equation*}
      and mean weak convergence on $W^{s,p}(\Omega,K)$ and weak* convergence on $\BV[K]$, respectively.
\end{definition}
\begin{remark} 
\label{re:notes_choose_w}
~\nopagebreak
\begin{itemize}[topsep=0pt]
\item In general $\Lp[K], \Wsp[K]$ and $\BV[K]$ are sets which do not form a linear space.
\item If $K = \sphere$, then $\Wsp[K] = \Wsp[\sphere]$ as occurred in \cite{BouBreMir00b}.
\item For an embedded manifold $K$ the dimension of the manifold is not necessarily identical with the space dimension of $\R^M$. 
      For instance if $K = \sphere \subseteq \R^2$, then the dimension of $\sphere$ is $1$ and $M=2$.
\end{itemize}
\end{remark}
The following lemma shows that $W(\Omega,K)$ is a sequentially closed subset of $\W$.
\begin{lemma}[Sequential closedness of $W(\Omega,K)$ and {$\Lp[K]$}] \label{lem:Wsp_weakly_seq_closed_etc} 
  ~\nopagebreak 
  \vspace{-0.5\baselineskip} 
  \begin{enumerate}[topsep=0pt]
    \item 
      Let $w_* \in \W$ and $(w_n)_{n\in \N}$ be a sequence in $\W[K] \subseteq \W$ with 
      $w_n \overset{W(\Omega, \R^M)}{\longrightarrow} w_*$ as $n \to \infty$. 
      Then $w_* \in \W[K]$ and $w_n \rarr w_*$ in $\Lp[K]$.
    \item 
        Let $v_* \in \Lp$ and $(v_n)_{n \in \N}$ be a sequence in $\Lp[K] \subseteq \Lp$ with $v_n \to v_*$ in $\Lp$ as $n \to \infty$.
        Then $v_* \in \Lp[K]$  and there is some subsequence $(v_{n_k})_{k \in \N}$ which converges to $v_*$ pointwise almost everywhere, i.e. $v_{n_k}(x) \to v_*(x)$
        as $k \to \infty$ for almost every $x \in \Omega$.
  \end{enumerate}
\end{lemma}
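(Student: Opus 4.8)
The plan is to establish part \emph{(ii)} first, since it supplies the pointwise information needed for part \emph{(i)}, and then to deduce \emph{(i)} by upgrading the weak (resp.\ weak*) convergence to strong $L^p$ convergence via the compact embedding of \autoref{le:inclusion}. For part \emph{(ii)}, I would start from the assumed strong convergence $v_n \to v_*$ in $\Lp$. Since $L^p$ convergence entails almost-everywhere convergence along a subsequence, there is a subsequence $(v_{n_k})_{k \in \N}$ with $v_{n_k}(x) \to v_*(x)$ for almost every $x \in \Omega$, which is already the second assertion. To obtain $v_* \in \Lp[K]$, observe that each $v_{n_k}$ satisfies $v_{n_k}(x) \in K$ outside a null set $Z_k$, and $v_{n_k}(x) \to v_*(x)$ outside a null set $Z_0$; on the full-measure set $\Omega \setminus (Z_0 \cup \bigcup_k Z_k)$ one has $v_{n_k}(x) \in K$ for all $k$ together with $v_{n_k}(x) \to v_*(x)$, so closedness of $K \subseteq \R^M$ forces $v_*(x) \in K$. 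Hence $v_*(x) \in K$ for a.e.\ $x$, i.e.\ $v_* \in \Lp[K]$.

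For part \emph{(i)}, the first task is to produce strong $L^p$ convergence $w_n \to w_*$. When $\W[K] = \Wsp[K]$, the given convergence is weak convergence in $\Wsp$; such a sequence is norm-bounded by Banach--Steinhaus, so the compactness of the embedding $\Wsp \hookrightarrow \Lp$ from \autoref{le:inclusion} makes $(w_n)_{n \in \N}$ relatively compact in the strong $L^p$ topology. Any strongly $L^p$-convergent subsequence must have limit $w_*$, because its strong limit is also its weak $L^p$ limit, the weak $\Wsp$ convergence to $w_*$ forces weak $L^p$ convergence to $w_*$ by weak continuity of the (continuous, linear) embedding, and weak $L^p$ limits are unique. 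Since $(w_n)$ is relatively compact in $L^p$ while every $L^p$-convergent subsequence has the same limit $w_*$, the subsequence principle yields $w_n \to w_*$ strongly in $\Lp$. When $\W[K] = \BV[K]$ and $p = 1$, strong $L^1$ convergence is already part of the definition of weak* convergence in $\BV$, so this step is immediate.

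With strong $L^p$ convergence in hand, I would apply part \emph{(ii)} with $v_n = w_n$ and $v_* = w_*$ to conclude $w_*(x) \in K$ for a.e.\ $x$; combined with the hypothesis $w_* \in \W$ this gives $w_* \in \W[K]$, and the strong convergence $w_n \to w_*$ in $\Lp[K]$ is the remaining claim. The single non-routine step is the passage from weak (resp.\ weak*) convergence to strong $L^p$ convergence in part \emph{(i)}, which rests entirely on the compactness of the embedding in \autoref{le:inclusion}; everything else reduces to the subsequence principle, the extraction of an almost-everywhere convergent subsequence, and the closedness of $K$.
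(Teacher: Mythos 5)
Your proposal is correct and follows essentially the same route as the paper, which only sketches the argument: part \emph{(ii)} via the standard fact that $L^p$ convergence yields an almost-everywhere convergent subsequence together with the closedness of $K$, and part \emph{(i)} via the compact embedding of \autoref{le:inclusion}, a subsequence argument, and part \emph{(ii)}. You have merely filled in the details (boundedness of weakly convergent sequences, uniqueness of the weak limit, the subsequence principle) that the paper leaves implicit.
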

\begin{proof}
  For the proof of the second part, cf. \cite{Els02}, Chapter VI, Corollary 2.7
  and take into account the closedness of $K \subseteq \R^M$.
  The proof of the first part follows from standard convergence arguments in $\Wsp$, $\BV$ and $\Lp$, respectively, 
  using the embeddings from \autoref{le:inclusion}, an argument on subsequences and part two.
\end{proof}

\begin{remark}
  \autoref{le:inclusion} along with \autoref{lem:Wsp_weakly_seq_closed_etc} imply that $\W[K]$ is compactly embedded in $\Lp[K]$, 
  where these sets are equipped with the bornology inherited from $\W$ and the topology inherited from $\Lp$, respectively.
\end{remark}

In the following we postulate the assumptions on the operator $\op$ which will be used throughout the paper:
\begin{assumption}
\label{ass:2}
Let $\W<1>$ be as in \autoref{eq:ChooseW} and assume that $\op$ is an operator from $\W<1>$ to $\Lp<2>$. 
\end{assumption}

We continue with the definition of our regularization functionals:
\begin{definition} \label{def:functional}
Let \autoref{ass:1} and \autoref{ass:2} hold. Moreover, let $\ve > 0$ be fixed and let 
$\rho \defeq \rho_\ve$ be a mollifier.

The regularization functional  
    $\F<v><\alpha>[\dKt, \dKo] : \W<1> \rarr [0, \infty]$ is defined as follows
    \begin{equation}\label{eq: functional_with_some_metric}
    \boxed{
      \F<v><\alpha>[\dKt, \dKo] (w) \defeq \int\limits_{\Omega_2} \dKt^{p_2}(\op[w](x), v(x)) \dx + \alpha \int\limits_{\Omega_1\times \Omega_1} \frac{\dKo^{p_1}(w(x), w(y))}{\normN[x-y]^{k+p_1 s}} \rho^l(x-y) \dxy,}
        \end{equation}
    where 
    \begin{enumerate}
      \item  $v \in \Lp<2>$,
      \item  $s \in (0,1]$, 
      \item  $\alpha \in (0, \pinfty)$ is the regularization parameter, 
      \item  $l \in \set{0, 1}$ is an indicator and
      \item  \label{itm:k}
      $\begin{cases}
      k \leq N &\mbox{if } \W<1> = W^{s,p_1}(\Omega_1, K_1), \ 0<s<1, \\
      k=0 & \mbox{if } \W<1> = W^{1,p_1}(\Omega_1, K_1) \text{ or if } \W<1> = BV(\Omega_1, K_1), \text{ respectively.}
      \end{cases}$
    \end{enumerate}
    Setting
    \begin{equation}\label{eq:d2}
    \boxed{
     \mebr[\phi][\nu]_{[\dKt]} \defeq \left( \int\limits_{\Omega_2} \dKt^{p_2}(\phi(x),\nu(x)) \dx \right)^{\frac{1}{p_2}},}
    \end{equation}
    and 
    \begin{equation}\label{eq:d3}
    \boxed{
     \Reg_{[\dKo]}(w) \defeq \int\limits_{\Omega_1\times \Omega_1} \frac{\dKo^{p_1}(w(x), w(y))}{\normN[x-y]^{k+p_1 s}} \rho^l(x-y) \dxy,}
    \end{equation}
    \autoref{eq: functional_with_some_metric} can be expressed in compact form
    \begin{equation}
      \label{eq:functional}
      \boxed{
        \F<v><\alpha>[\dKt,\dKo](w) = \mebr[\op[w]][v]^{p_2}_{[\dKt]} + \alpha \Reg_{[\dKo]}(w).}
    \end{equation}
    For convenience we will often skip some of the super- or subscript, and use compact notations like e.g. 
    \begin{equation*}
     \F<v>, \F[\dKt, \dKo] \text{ or } \F(w) = \mebr[\op[w]][v]^{p_2} + \alpha \Reg(w).
    \end{equation*}
\end{definition}

\begin{remark}
~
  \begin{enumerate}[topsep=0pt]
    \item $l = \set{0,1}$ is an indicator which allows to consider approximations of Sobolev semi-norms and double integral 
        representations 
        of the type of \citeauthor{BouBreMir01} \cite{BouBreMir01} in a uniform manner. 
        \begin{itemize}
         \item when $k=0$, $s=1$, $l=1$ and when $d_1$ is the Euclidean distance, we get the double integrals of the 
         \citeauthor{BouBreMir01}-form \cite{BouBreMir01}. Compare with \autoref{eq:double_integral}.
         \item When $d_1$ is the Euclidean distance, $k=N$ and $l=0$, we get Sobolev semi-norms.
        \end{itemize}
        We expect a relation between the two classes of functionals for $l=0$ and $l=1$ as stated in \autoref{ss:conjecture}.
    \item 
      When $d_1$ is the Euclidean distance then the second term in \autoref{eq: functional_with_some_metric} is similar to the ones used in 
      \cite{AubKor09,BouElbPonSch11} and \cite{BouBreMir01, Pon04b, Dav02}. 

  \end{enumerate}
\end{remark}

In the following we state basic properties of $\mebr_{[\dKt]}$ and the functional 
$\F$.

\begin{proposition} \label{pr:ExprIsOp}
Let \autoref{ass:1} hold. 

\begin{enumerate}
\item Then the mapping $ \mebr_{[\dKt]} : \Lp<2> \times \Lp<2> \rarr [0, \pinfty]$ 
satisfies the metric axioms.

\item \label{itm: ExpIsOp} Let, in addition, \autoref{ass:2} hold, assume that $v \in \Lp<2>$ and that both metrics $d_i$, $i=1,2$, 
are equivalent to $\dRMi|_{K_i \times K_i}$, respectively.
Then the functional $\F<v><\alpha>[\dKt, \dKo]$ does not 
attain the value $+\infty$ on its domain $\W<1> \neq \emptyset$. 
\end{enumerate}
\end{proposition}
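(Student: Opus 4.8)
The plan is to treat the two assertions separately: the first is a transfer of the pointwise metric axioms of $\dKt$ to an integral functional, and the second is a finiteness estimate obtained by dominating both metrics with the Euclidean distance. For the first part I would fix representatives of $\phi,\nu,\psi\in\Lp<2>$ and check the four axioms for $\mebr_{[\dKt]}$, allowing the value $\pinfty$ in accordance with the stated codomain. By \autoref{ass:1} the bound $\dRMt[K_2]\le\dKt$ and the stated sequential continuity together show that $\dKt$ induces the Euclidean topology on $K_2$, hence $\dKt$ is jointly continuous and $x\mapsto\dKt(\phi(x),\nu(x))$ is measurable, so $\mebr[\phi][\nu]_{[\dKt]}$ is well defined in $[0,\pinfty]$. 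Non-negativity and symmetry are inherited pointwise from $\dKt$; identity of indiscernibles holds because $\int_{\Omega_2}\dKt^{p_2}(\phi,\nu)\dx=0$ forces $\dKt(\phi(x),\nu(x))=0$ almost everywhere, i.e. $\phi=\nu$ in $\Lp<2>$, since $\dKt$ separates points. For the triangle inequality I would combine the pointwise estimate $\dKt(\phi(x),\psi(x))\le\dKt(\phi(x),\nu(x))+\dKt(\nu(x),\psi(x))$ with monotonicity of the $L^{p_2}$ functional on non-negative functions and Minkowski's inequality in $L^{p_2}(\Omega_2,\R)$, the latter being valid in the extended sense so that a value $\pinfty$ on the right causes no difficulty.

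For the second part I would use the additional equivalence hypothesis to fix constants $C_1,C_2>0$ with $\d_i\le C_i\,\dRMi$ on $K_i\times K_i$, $i=1,2$ (only the upper bounds are needed). The fidelity term is then immediately finite, since $\int_{\Omega_2}\dKt^{p_2}(\op[w](x),v(x))\dx\le C_2^{p_2}\int_{\Omega_2}\norm{\op[w](x)-v(x)}_{\R^{M_2}}^{p_2}\dx=C_2^{p_2}\norm{\op[w]-v}_{L^{p_2}(\Omega_2,\R^{M_2})}^{p_2}<\infty$, because $\op[w]\in\Lp<2>$ by \autoref{ass:2} and $v\in\Lp<2>$ by hypothesis.

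It remains to bound $\Reg_{[\dKo]}(w)$, for which the same domination gives the upper estimate $C_1^{p_1}\int_{\Omega_1\times\Omega_1}\normMo[w(x)-w(y)]^{p_1}\normN[x-y]^{-(k+p_1 s)}\rho^l(x-y)\dxy$. Here I would distinguish according to the constraints on $k$ recorded in \autoref{def:functional}. If $0<s<1$, then $k\le N$, and I split $\Omega_1\times\Omega_1$ into $\set{\normN[x-y]\le 1}$ and its complement: on the first region $\normN[x-y]^{-(k+p_1 s)}\le\normN[x-y]^{-(N+p_1 s)}$, so that contribution is dominated by the finite semi-norm $\abs{w}_{W^{s,p_1}(\Omega_1,\R^{M_1})}^{p_1}$, while on the second region the kernel is bounded and $\normMo[w(x)-w(y)]^{p_1}$ is integrable because $w\in\Lp<1>$ by \autoref{le:inclusion}; the factor $\rho^l\le\max\set{1,\norm{\rho}_\infty}$ only contributes a constant. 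If instead $s=1$ (so $k=0$ and $\W<1>$ is either $W^{1,p_1}(\Omega_1,K_1)$ or $\BV<1>$), the denominator is exactly $\normN[x-y]^{p_1}$, respectively $\normN[x-y]$, and I would pass to a bounded extension $\bar w$ of $w$ to $\R^N$, available since $\Omega_1$ has Lipschitz boundary.

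The technical heart, and the main obstacle I anticipate, is this $s=1$ estimate. In the $W^{1,p_1}$ case I would write $\bar w(x)-\bar w(y)=\int_0^1\nabla\bar w(y+t(x-y))\cdot(x-y)\d t$, apply Jensen's inequality to pull the power $p_1$ inside the $t$-average, substitute $h=x-y$, and use translation invariance of Lebesgue measure; since the restriction to $\Omega_1\times\Omega_1$ confines $h$ to $B(0,\mathrm{diam}\,\Omega_1)$ when $l=0$ and $\rho$ has unit mass when $l=1$, the effective weight in $h$ is integrable, and the crucial point is that $\normN[x-y]^{p_1}$ in the denominator is cancelled by the factor $(x-y)$ from the segment integral, removing the diagonal singularity and leaving a bound by a constant times $\abs{\bar w}_{W^{1,p_1}(\R^N)}^{p_1}$. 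In the $\BV$ case the difference-quotient identity is replaced by the classical translation estimate $\norm{\bar w(\cdot+h)-\bar w(\cdot)}_{L^1(\R^N)}\le\normN[h]\,\abs{D\bar w}(\R^N)$, which after the same change of variables again cancels $\normN[x-y]$ and yields finiteness. Beyond assembling the extension, the change of variables, and the weight $\rho^l$ consistently, the argument is routine, the fractional case reducing almost directly to the already finite Sobolev semi-norm.
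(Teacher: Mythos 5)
Your proposal is correct, and for most of the statement it runs parallel to the paper's own proof. In part \emph{(i)} your appeal to Minkowski's inequality for extended-real-valued nonnegative functions does exactly the work of the paper's explicit computation: the paper in effect reproves Minkowski in the metric setting via H\"older, treating the cases $\mebr[\phi][\nu]_{[\dKt]}\in(0,\infty)$, $=0$ and $=\infty$ separately, which is precisely the ``extended sense'' you invoke; your remark on the measurability of $x\mapsto\dKt(\phi(x),\nu(x))$ addresses a point the paper leaves implicit. In part \emph{(ii)} your treatment of the nonemptiness of $\W<1>$, of the fidelity term, and of the fractional case $0<s<1$ (splitting $\Omega_1\times\Omega_1$ at $\normN[x-y]=1$) coincides with the paper's. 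The genuine divergence is in the two $s=1$ cases: the paper disposes of them by citing \cite[Theorem 1]{BouBreMir01} for $W^{1,p_1}(\Omega_1,K_1)$ and \cite[Theorem 1.2]{Pon04b} for $\BV<1>$, whereas you reprove the needed finiteness from scratch via a bounded extension $\bar w$ to $\R^N$, the segment representation of $\bar w(x)-\bar w(y)$ combined with Jensen's inequality (respectively the $BV$ translation estimate $\norm{\bar w(\cdot+h)-\bar w}_{L^1}\leq\normN[h]\,\abs{D\bar w}(\R^N)$), and the substitution $h=x-y$. This argument is sound --- it is essentially the elementary direction of the very results being cited --- and it buys self-containedness together with an explicit explanation of why the diagonal singularity $\normN[x-y]^{-p_1}$ is cancelled by the factor $x-y$ coming from the chord; the paper's citations buy brevity. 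The only care your route requires, which you correctly flag, is the density or absolute-continuity-on-lines justification of the segment formula for general $W^{1,p_1}$ functions and the existence of the extension operator on Lipschitz domains, both standard.
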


\begin{proof}
\begin{enumerate}
\item The axioms of non-negativity, identity of indiscernibles and symmetry are fulfilled by $\mebr_{[\dKt]}$ 
      since $\dKt$ is a metric. To prove the triangle inequality let $\phi,\xi,\nu \in L^{p_2}(\Omega_2, K_2)$. 
      In the main case $\mebr[\phi][\nu]_{[\dKt]}^{p_2} \in (0, \infty)$ Hölder's inequality yields
	\begin{align*}
	\mebr[\phi][\nu]_{[\dKt]}^{p_2} ={}&
	\int\limits_{\Omega_2} \dKt\big(\phi(x),\nu(x) \big)  \dKt^{p_2-1}\big( \phi(x),\nu(x) \big)  \dx \\ 
	\leq{}&  \int\limits_{\Omega_2} \dKt\big( \phi(x),\xi(x) \big) \dKt^{p_2-1}\big( \phi(x),\nu(x) \big) \dx  
	      + \int\limits_{\Omega_2} \dKt\big( \xi(x),\nu(x) \big) \dKt^{p_2-1} \big( \phi(x),\nu(x) \big) \dx \\
  \leq{}&  
  \left( \int\limits_{\Omega_2} \dKt^{p_2} \big(\phi(x),\xi(x) \big) \dx  \right)^{\frac{1}{p_2}} 
       \left( \int\limits_{\Omega_2} \dKt^{p_2}\big( \phi(x),\nu(x) \big) \dx  \right)^{\frac{p_2-1}{p_2}}\\
  &+ \left( \int\limits_{\Omega_2} \dKt^{p_2} \big(\xi(x),\nu(x) \big) \dx  \right)^{\frac{1}{p_2}} 
       \left( \int\limits_{\Omega_2} \dKt^{p_2}\big( \phi(x),\nu(x) \big) \dx  \right)^{\frac{p_2-1}{p_2}}\\
	={}& \left( \mebr[\phi][\xi]_{[\dKt]} + \mebr[\xi][\nu]_{[\dKt]} \right)
	\mebr[\phi][\nu]_{[\dKt]}^{p_2-1},
	\end{align*} 
  meaning 
  \begin{gather*}
    \mebr[\phi][\nu]_{[\dKt]} \leq \mebr[\phi][\xi]_{[\dKt]} + \mebr[\xi][\nu]_{[\dKt]}.
  \end{gather*}
  If $\mebr[\phi][\nu]_{[\dKt]} = 0$ the triangle inequality is trivially fulfilled. 
  
  In the remaining case $\mebr[\phi][\nu]_{[\dKt]} = \infty$ applying the estimate $(a+b)^p \leq 2^{p-1} (a^p + b^p)$,
  see e.g. \cite[Lemma 3.20]{SchGraGroHalLen09},
  to $a = \dKt(\phi(x), \xi(x)) \geq 0$ and $b = \dKt(\xi(x), \nu(x)) \geq 0$ yields 
  \begin{gather*}
    \mebr[\phi][\nu]_{[\dKt]}^{p_2} \leq 2^{p_2-1} \big( \mebr[\phi][\xi]_{[\dKt]}^{p_2} + \mebr[\xi][\nu]_{[\dKt]}^{p_2} \big),
  \end{gather*}
  implying the desired result.
\item We emphasize that $\W<1> \neq \emptyset$ because every constant function $w(\cdot) = a \in K_1$ belongs to $\Wsp<1>$ 
      for $p_1 \in (1, \infty)$ and $s \in (0,1]$ as well as to $\BV<1>$ for $p_1 = 1$ and $s = 1$.
      Assume now that the metrics $d_i$ are equivalent to $\dRMi|_{K_i \times K_i}$ for $i=1$ and $i=2$, respectively,
      so that we have an upper bound $d_i \leq C \dRMi|_{K_i \times K_i}$.
      We need to prove that $\F<v><\alpha>[\dKt, \dKo](w) < \infty$ for every $w \in \W<1>$.
      Due to $\mebr[\phi][\nu]^{p_2}_{[\dKt]} \leq C^{p_2} \norm{\phi - \nu}^{p_2}_{\Lp<2>[][\R^{M_2}]} < \infty$ 
      for all $\phi, \nu \in \Lp<2> \subseteq \Lp<2>[][\R^{M_2}]$ it is sufficient to show
      $\Reg_{[\dKo]}(w) < \pinfty$ for all $w \in \W<1>$.
\begin{itemize}
  \item For $\W<1> = \BV<1>$ this is guaranteed by \cite[Theorem 1.2]{Pon04b}.
  \item For $\W<1> = W^{1,p_1}(\Omega_1, K_1)$ by \cite[Theorem 1]{BouBreMir01}.
  \item For $\W<1> = \Wsp<1>$, $s \in (0,1)$,  we distinguish between two cases. \\
      If $\normN[x-y]< 1$ we have that $\frac{1}{\normN[x-y]^{k+p_1 s}} \leq \frac{1}{\normN[x-y]^{N+p_1 s}}$ for $k \leq N$ and hence
      \begin{gather*}\int\limits_{\begin{smallmatrix}
      	(x,y) \in \Omega_1 \times \Omega_1 \\ \normN[x-y]< 1
      	\end{smallmatrix}} \frac{\dKo^{p_1}(w(x), w(y))}{\normN[x-y]^{k+p_1 s}} \rho^l(x-y) \dxy 
        \leq C^{p_1} \norm{\rho^l}_{\infty} \abs{w}_{W^{s,p_1}(\Omega_1, \R^{M_1})}^{p_1} < \infty\;.
      \end{gather*}
      If $\normN[x-y]\geq 1$ we can estimate 
      \begin{gather*}\int\limits_{\begin{smallmatrix}
      	(x,y) \in \Omega_1 \times \Omega_1 \\ \normN[x-y]\geq 1
      	\end{smallmatrix}} \frac{\dKo^{p_1}(w(x), w(y))}{\normN[x-y]^{k+p_1 s}} \rho^l(x-y) \dxy 
        \leq C^{p_1} \norm{\rho^l}_{\infty} 2^{p_1} |\Omega_1| \norm{w}^{p_1}_{L^{p_1}(\Omega_1, \R^{M_1})} < \infty\;.
     \end{gather*}
     In summary adding yields $\Reg_{[\dKo]}(w) < \pinfty$.
     \end{itemize}
\end{enumerate}
\end{proof}

\section{Existence} \label{sec: Existence}

In order to prove existence of a minimizer of the functional $\F$ we apply the Direct Method in the Calculus of Variations
(see e.g. \cite{Dac82,Dac89}). 
To this end we verify continuity properties of $\mebr_{[\dKt]}$ and $\Reg_{[\dKo]}$, resp. $\F[\dKt, \dKo]$ and apply them along with
the sequential closedness of $\W<1>$, already proven in \autoref{lem:Wsp_weakly_seq_closed_etc}.

In this context we point out some setting assumptions and their consequences on $\F$, resp. $\mebr$ and $\Reg$ in the following remark. 
For simplicity we assume 
$p \defeq p_1 = p_2 \in (1, \infty)$, $\Omega \defeq \Omega_1 = \Omega_2$
and $(K, \dK) \defeq (K_1, \dKo) = (K_2, \dKt)$.
\begin{remark} \label{re:tricks}
~
	\begin{itemize}[topsep=0pt]
		\item
		The continuity of $\dK$ with respect to $\dRM|_{K \times K}$ guarantees lower semicontinuity of $\mebr_{[\dK]}$ and $\Reg_{[\dK]}$.
		\item
		The inequality $\dRM|_{K \times K} \leq \dK$ carries over to the inequalities 
		$\norm{\widetilde v - v}_{\Lp} \leq \mebr[\widetilde v][v]_{[\dK]}$ for all $\widetilde v, v \in \Lp[K]$,
		and $|w|_{\W} \leq \Reg_{[\dK]}(w)$ for all $w \in \W[K]$, allowing to transfer
		properties like coercivity from $\F[\dRM,\dRM]$ to $\F[\dK,\dK]$.
		Moreover, the extended real-valued metric space $(\Lp[K], \mebr_{[\dK]})$ stays 
		related to the linear space $(\Lp, \norm{\cdot}_{\Lp})$ in terms of the topology and bornology induced by $\mebr$, 
		resp. those inherited by
		$\norm{\cdot}_{\Lp}$.
		\item
		The closedness of $K \subseteq \R^M$ is crucial in showing that $\W[K]$ is a sequentially closed subset of the linear space $\W$.
		This closedness property acts as a kind of replacement for the, a priori not available, notion of completeness with respect to the 
		``space'' $(\W[K], \mebr, \Reg)$.
	\end{itemize}
	For $l=0$, $k=N$ note in the latter item that
	equipping $\W[K]$ with $\mebr_{[\dKt]}$ and $\Reg_{[\dKo]}$ does not even lead to an (extended real-valued) metric space,
	in contrast to the classical case $(K,\dK) = (\R^M, \dRM)$.
\end{remark}

We will use the following assumption:

\begin{assumption} \label{as:Setting}  
  Let \autoref{ass:1} hold, $v^0 \in \Lp<2>$ and let $\W<1>$ and the associated topology be as defined in \autoref{eq:ChooseW}.
  
  In addition we assume: 
  \begin{itemize}
   \item $\op: \W<1> \to \Lp<2>$ is well--defined and sequentially continuous with respect to the specified topology on $\W<1>$ and 
   \item For every $t > 0$ and $\alpha > 0$ the level sets 
      \begin{equation}\label{itm: A} 
      \text{level}_t(\F<v^0><\alpha>[\dKt, \dKo]) \defeq \{ w \in \W<1> \ : \  \F<v^0><\alpha>[\dKt,\dKo] \leq t \}  
      \end{equation}
    are sequentially pre-compact subsets of $W(\Omega_1, \R^{M_1})$.
   \item There exists a $\bar{t} > 0$ such that $\text{level}_{\bar{t}}(\F<v^0><\alpha>[\dKt, \dKo])$ is nonempty.
   \item Only those $v \in \Lp<2>$ are considered which additionally fulfill $\mebr[v][v^0]_{[\dKt]} < \infty$. 
  \end{itemize}
\end{assumption}

\begin{remark}
  The third condition is sufficient to guarantee $\F<v^0><\alpha>[\dKt, \dKo]) \not \equiv \infty$. In contrast the condition $v^0 \in \Lp<2>$, cf. 
  \autoref{def:functional}, might not be sufficient if $d_2$ is not equivalent to $\dRMt|_{K_2 \times K_2}$.
\end{remark}

\begin{lemma} \label{thm:F_and_its_summands_are_seq_weakly_closed} 
   Let \autoref{as:Setting} hold. 
   Then the  mappings $\mebr_{[\dKt]}$, $\Reg_{[\dKo]}$ and $\F[\dKt, \dKo]$ have the following continuity properties:
   \begin{enumerate}
     \item \label{enu:continuity_of_mebr}
       The mapping $\mebr_{[\dKt]}: \Lp<2> \times \Lp<2> \rarr [0, \pinfty]$ is sequentially lower semi-continuous,
       i.e. whenever sequences $\seq{\phi}$, $\seq{\nu}$ in $\Lp<2>$ converge to $\phi_* \in \Lp<2>$ and 
       $\nu_*\in \Lp<2>$, respectively, we have $\mebr[\phi_*][\nu_*]_{[\dKt]} \leq \limi \limits \mebr[\phi_n][\nu_n]_{[\dKt]}$.
     \item \label{enu:seq_lscty_of_R} 
       The functional $\Reg_{[\dKo]}: \W<1> \rarr [0,\infty]$ is sequentially lower semi-continuous, i.e. whenever a 
       sequence $(w_n)_{n \in \N}$ in $\W<1>$ converges to some $w_* \in \W<1>$ we have
       \begin{equation*}
          \Reg_{[\dKo]}(w_*)  \leq \limi \Reg_{[\dKo]}(w_n).
       \end{equation*}
     \item \label{enu:seq_lscty_of_F} 
       The functional $\F[\dKt,\dKo]: W(\Omega_1, K_1) \rarr [0,\infty]$ is sequentially lower semi-continuous.
   \end{enumerate}
\end{lemma}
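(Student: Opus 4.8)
The plan is to prove all three statements by one common device: pass to a subsequence realizing the relevant $\liminf$, extract from it a further subsequence along which the arguments converge pointwise almost everywhere (made available by \autoref{lem:Wsp_weakly_seq_closed_etc}), and then apply Fatou's lemma to the nonnegative integrands involved. The one preliminary I would record is that each metric $\d_i$, $i \in \{1,2\}$, is \emph{jointly} continuous with respect to the Euclidean distance: combining the one-variable continuity postulated in \autoref{ass:1} with the triangle inequality gives, for $a_n \to a$ and $b_n \to b$ in $K_i \subseteq \R^{M_i}$,
\begin{equation*}
  \abs{\d_i(a_n,b_n) - \d_i(a,b)} \leq \d_i(a_n,a) + \d_i(b_n,b) \longrightarrow 0,
\end{equation*}
so that $\d_i(a_n,b_n) \to \d_i(a,b)$.

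For part \ref{enu:continuity_of_mebr}, let $\phi_n \to \phi_*$ and $\nu_n \to \nu_*$ (strongly) in $\Lp<2>$ and put $L \defeq \limi \mebr[\phi_n][\nu_n]_{[\dKt]}$. I would first select a subsequence along which $\mebr[\phi_n][\nu_n]_{[\dKt]} \to L$, and then, invoking \autoref{lem:Wsp_weakly_seq_closed_etc}, thin it once more (without relabelling) so that both $\phi_n \to \phi_*$ and $\nu_n \to \nu_*$ pointwise almost everywhere on $\Omega_2$. By the joint continuity above, $\dKt^{p_2}(\phi_n(x),\nu_n(x)) \to \dKt^{p_2}(\phi_*(x),\nu_*(x))$ for a.e. $x$, and since these integrands are nonnegative, Fatou's lemma yields
\begin{equation*}
  \mebr[\phi_*][\nu_*]_{[\dKt]}^{p_2} \leq \limi \int\limits_{\Omega_2} \dKt^{p_2}(\phi_n(x),\nu_n(x)) \dx = L^{p_2};
\end{equation*}
taking $p_2$-th roots gives the assertion.

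Part \ref{enu:seq_lscty_of_R} is structurally identical. For $w_n \to w_*$ in $\W<1>$, \autoref{lem:Wsp_weakly_seq_closed_etc} provides $w_* \in \W<1>$ and strong $L^{p_1}$-convergence $w_n \to w_*$. After passing to a subsequence realizing $\limi \Reg_{[\dKo]}(w_n)$ and then to one with $w_n(x) \to w_*(x)$ for a.e. $x \in \Omega_1$, I would observe that, the exceptional set in $\Omega_1$ being null, $w_n(x) \to w_*(x)$ and $w_n(y) \to w_*(y)$ hold simultaneously for a.e. $(x,y) \in \Omega_1 \times \Omega_1$. Joint continuity of $\dKo$ then makes the numerator $\dKo^{p_1}(w_n(x),w_n(y))$ converge pointwise a.e., while the factor $\normN[x-y]^{-(k+p_1 s)} \rho^l(x-y) \geq 0$ is independent of $n$; Fatou's lemma gives $\Reg_{[\dKo]}(w_*) \leq \limi \Reg_{[\dKo]}(w_n)$.

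Finally, for part \ref{enu:seq_lscty_of_F}, let $w_n \to w_*$ in $W(\Omega_1,K_1)$. Sequential continuity of $\op$ from \autoref{as:Setting} gives $\op[w_n] \to \op[w_*]$ in $\Lp<2>$, so part \ref{enu:continuity_of_mebr} applied with the constant second argument $v$, together with the continuity and monotonicity of $t \mapsto t^{p_2}$, yields $\mebr[\op[w_*]][v]_{[\dKt]}^{p_2} \leq \limi \mebr[\op[w_n]][v]_{[\dKt]}^{p_2}$, while part \ref{enu:seq_lscty_of_R} gives $\Reg_{[\dKo]}(w_*) \leq \limi \Reg_{[\dKo]}(w_n)$. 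Since $\liminf$ is superadditive on $[0,\pinfty]$, adding the two estimates produces
\begin{equation*}
  \F[\dKt,\dKo](w_*) \leq \limi \mebr[\op[w_n]][v]_{[\dKt]}^{p_2} + \alpha \limi \Reg_{[\dKo]}(w_n) \leq \limi \F[\dKt,\dKo](w_n).
\end{equation*}
The chief obstacle I anticipate is not any single estimate but the bookkeeping of the nested subsequences: Fatou's lemma only produces lower semicontinuity \emph{along} a pointwise a.e. convergent subsequence, so one must first extract a subsequence that attains the $\liminf$ and only afterwards thin it to an a.e. convergent one, verifying that this second thinning leaves the $\liminf$ unchanged. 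A secondary point requiring care is the passage from a.e. convergence on $\Omega_1$ to a.e. convergence on the product $\Omega_1 \times \Omega_1$ in part \ref{enu:seq_lscty_of_R}.
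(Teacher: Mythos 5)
Your proposal is correct and follows essentially the same route as the paper: extraction of a pointwise a.e.\ convergent subsequence via \autoref{lem:Wsp_weakly_seq_closed_etc}, joint continuity of the metrics obtained from the quadrangle (triangle) inequality, Fatou's lemma for parts \textit{(i)} and \textit{(ii)}, and composition with the sequentially continuous operator $\op$ plus superadditivity of $\liminf$ for part \textit{(iii)}. The only cosmetic difference is that you organize the subsequence bookkeeping by first passing to a subsequence realizing the $\liminf$, whereas the paper phrases the same reduction via the ``every subsequence has a further subsequence'' criterion and explicitly truncates the integrand off the diagonal $\set{x=y}$ (a null set, so your omission is harmless).
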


\begin{proof}
	\begin{enumerate}
	\item \label{enu:lsc_of_mebr}
		It is sufficient to show that for \emph{every} pair of sequences $\seq{\phi}$, $\seq{\nu}$ in $\Lp<2>$ which 
		converge to previously \emph{fixed} elements  $\phi_* \in \Lp<2>$ and $\nu_*\in \Lp<2>$, respectively, we can extract subsequences 
		$(\phi_{n_j})_{j \in \N}$ and $(\nu_{n_j})_{j \in \N}$, respectively, with 
		\begin{gather*}
		\mebr[\phi_*][\nu_*]_{[\dKt]} \leq \liminf_{j \rarr \infty} \mebr[\phi_{n_j}][\nu_{n_j}]_{[\dKt]}.
		\end{gather*}
		To this end let $(\phi_n)_{n \in \N},(\nu_n)_{n \in \N}$ be some sequences in $\Lp<2>$ with $\phi_n \rarr \phi_*$ and $\nu_n \rarr \nu_*$ in $\Lp<2>$. 
		\autoref{lem:Wsp_weakly_seq_closed_etc} ensures that there exist subsequences $(\phi_{n_j})_{j \in \N}, (\nu_{n_j})_{j \in \N}$ 
		converging to $\phi_*$ and $\nu_*$ pointwise almost everywhere, 
		which in turn implies $\big(\phi_{n_j}(\cdot), \nu_{n_j}(\cdot) \big) \to \big( \phi_*(\cdot), \nu_*(\cdot) \big)$ 
		pointwise almost everywhere. Therefrom, together with the continuity  of $\dKt: K_2 \times K_2 \to [0, \infty)$ 
		with respect to $\dRMt$, cf. \autoref{sec: Setting}, we obtain		
		by using the quadrangle inequality 
		that
		\begin{gather*}
		   | \dKt(\phi_{n_j}(x), \nu_{n_j}(x)) - \dKt(\phi_*(x), \nu_*(x)) |  \leq  \dKt(\phi_{n_j}(x), \phi_*(x))  +  \dKt(\nu_{n_j}(x), \nu_*(x)) \rarr 0,
	     \end{gather*}
		and hence		
		\begin{gather*}
		\dKt^{p_2}\big( \phi_{n_j}(x), \nu_{n_j}(x) \big) \to \dKt^{p_2} \big( \phi_*(x),\nu_*(x) \big) \text{ for almost every }
		x \in \Omega_2.
		\end{gather*}
		Applying Fatou's lemma we obtain
		\begin{gather*}
		\mebr[\phi_*][\nu_*]_{[\dKt]} = \int_{\Omega_2 }\limits \dKt^{p_2}( \phi_*(x),\nu_*(x) ) \dx
		\leq \liminf_{j \rarr \infty} \int_{\Omega_2}\limits \dKt^{p_2} ( \phi_{n_j}(x), \nu_{n_j}(x) ) \dx
		= \liminf_{j \rarr \infty} \mebr[\phi_{n_j}][\nu_{n_j}]_{[\dKt]}. 
		\end{gather*}
		\item 
		Let $(w_n)_{n \in \N}$ be a sequence in $\W<1>$ with $w_n \Wto w_*$ as $n \rarr \infty$.
		By \autoref{lem:Wsp_weakly_seq_closed_etc} there is a subsequence 
		$(w_{n_j})_{j \in \N}$ which converges to $w_*$ both in $\Lp<1>$ and pointwise almost everywhere.
		This further implies that
		\begin{gather*}
		\dKo^{p_1}\big( w_{n_j}(x), w_{n_j}(y) \big) \to \dKo^{p_1} \big( w_*(x),w_*(y) \big)
		\end{gather*}
		for almost every 
		\begin{equation}
		 \label{eq:A} 
		 (x,y) \in \Omega_1 \times \Omega_1 \supseteq \{(x,y) \in \Omega_1 \times \Omega_1 : x \neq y \} \eqdef A.
		\end{equation}
		Defining
		\begin{equation*}
		f_j(x,y) \defeq \left\{ \begin{array}{rcl}
		\frac{\dKo^{p_1}(w_{n_j}(x), w_{n_j}(y)) }{\normN[x-y]^{k+ps}} \rho^l(x-y) & \text{ for } & 
		(x,y) \in A,\\
		0 & \text{ for } & 
		(x,y) \in (\Omega_1 \times \Omega_1) \setminus A,\\
		\end{array} \right. \quad \text{ for all } j \in \N.
		\end{equation*}
		and 
		\begin{equation*}
		f_*(x,y) \defeq \left\{ \begin{array}{ccl}
		\frac{\dKo^{p_1}(w_*(x), w_*(y)) }{\normN[x-y]^{k+ps}}   \rho^l(x-y) & \text{ for } & 
		(x,y) \in A,\\
		0 & \text{ for } & 
		(x,y) \in (\Omega_1 \times \Omega_1) \setminus A\\
		\end{array} \right. 
		\end{equation*}
		we thus have $f_*(x,y) = \lim_{j \rarr \infty} f_j(x,y)$
		for almost every $(x,y) \in \Omega_1 \times \Omega_1$.
		Applying Fatou's lemma to the functions $f_j$ yields the assertion, due to the same reduction as in the proof of the first part.
		
		\item 
		It is sufficient to prove that the components $\mathcal{G}(\cdot) = \mebr[F(\cdot)][v]_{[\dKt]}$ and $\Reg = \Reg_{[\dKo]}$ of 
		$\F[\dKo,\dKt] = \mathcal{G} + \alpha \Reg$ are sequentially lower semi-continuous.
		To prove that $\mathcal{G}$ is sequentially lower semi-continuous in every $w_* \in W(\Omega_1, K_1)$
		let $(w_n)_{n \in \N}$ be a sequence in $W(\Omega_1, K_1)$ with $w_n \Wto w_*$ as $n \rarr \infty$.
		\autoref{as:Setting}, ensuring the sequential continuity of $\op: \W<1> \to \Lp<2>$, implies hence $\op[w_n] \rarr \op[w_*]$ in $\Lp<2>$ 
		as $n \rarr \infty$.
		By \autoref{enu:continuity_of_mebr} we thus obtain $\mathcal{G}(w_*) = \mebr[\op[w_*]][v] \leq \limi \mebr[\op[w_n]][v] = \limi \mathcal{G}(w_n)$. \\
		$\Reg$ is sequentially lower semi-continuous by \autoref{enu:seq_lscty_of_R}.
	\end{enumerate}
\end{proof}

\subsection{Existence of minimizers}

The proof of 
  the existence of a 
  minimizer of $\F[\dKt, \dKo]$ is along the lines of the proof 
in \cite{SchGraGroHalLen09}, taking into account \autoref{re:tricks}
We will need the following useful lemma, cf. \cite{SchGraGroHalLen09}, which links $\text{level}_t(\F<v^0><\alpha>)$ and 
$\text{level}_t(\F<v><\alpha>)$ for $\mebr[v][v^0] < \infty$.

\begin{lemma}\label{lem:Ineq_F_parameterchange_to_other_v} 
	It holds 
	\begin{align*}
		\F<v_\star><>[\dKt, \dKo](w)  & \leq   2^{p_2-1} \F<v_\diamond><>[\dKt, \dKo](w) + 2^{p_2 -1} \mebr[v_\diamond][v_\star]_{[\dKt]}^{p_2}
	\end{align*}
	for every 
	$w \in W(\Omega_1, K_1)$ and $v_\star, v_\diamond \in L^{p_2}(\Omega_2, K_2)$.
\end{lemma}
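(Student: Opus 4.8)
The plan is to reduce the estimate to two ingredients that are already available: the triangle inequality for the extended-valued metric $\mebr_{[\dKt]}$ established in \autoref{pr:ExprIsOp}, and the elementary convexity bound $(a+b)^{p_2}\le 2^{p_2-1}(a^{p_2}+b^{p_2})$ used in its proof. Recalling from \autoref{eq:functional} that $\F<v><>[\dKt,\dKo](w)=\mebr[\op[w]][v]^{p_2}_{[\dKt]}+\alpha\Reg_{[\dKo]}(w)$, I note that, for fixed $\alpha$ and fixed $w$, the regularization term $\alpha\Reg_{[\dKo]}(w)$ is the very same in $\F<v_\star><>$ and in $\F<v_\diamond><>$ and does not involve the data. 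Thus the whole task boils down to comparing the two fidelity terms $\mebr[\op[w]][v_\star]^{p_2}_{[\dKt]}$ and $\mebr[\op[w]][v_\diamond]^{p_2}_{[\dKt]}$, after which the regularization term is simply reattached.

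First I would apply the triangle inequality for $\mebr_{[\dKt]}$ with intermediate point $v_\diamond$, obtaining $\mebr[\op[w]][v_\star]_{[\dKt]}\le\mebr[\op[w]][v_\diamond]_{[\dKt]}+\mebr[v_\diamond][v_\star]_{[\dKt]}$. Raising to the $p_2$-th power --- a monotone operation on $[0,\infty]$ --- and then invoking the convexity bound with $a=\mebr[\op[w]][v_\diamond]_{[\dKt]}$ and $b=\mebr[v_\diamond][v_\star]_{[\dKt]}$ gives
\[
\mebr[\op[w]][v_\star]^{p_2}_{[\dKt]}\le 2^{p_2-1}\mebr[\op[w]][v_\diamond]^{p_2}_{[\dKt]}+2^{p_2-1}\mebr[v_\diamond][v_\star]^{p_2}_{[\dKt]}.
\]
Adding $\alpha\Reg_{[\dKo]}(w)$ to both sides reconstructs $\F<v_\star><>[\dKt,\dKo](w)$ on the left. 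Since $p_2\ge1$ forces $2^{p_2-1}\ge1$, I can bound $\alpha\Reg_{[\dKo]}(w)\le 2^{p_2-1}\alpha\Reg_{[\dKo]}(w)$ and fold it into the first term, so that the two $v_\diamond$-contributions combine into $2^{p_2-1}\F<v_\diamond><>[\dKt,\dKo](w)$, which is exactly the claimed inequality.

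There is essentially no hard step here: the argument is a transcription of the convexity computation already carried out in \autoref{pr:ExprIsOp}. The only subtlety worth a sentence is that all quantities live in the extended reals $[0,\infty]$, so one should check that each manipulation --- monotonicity of $t\mapsto t^{p_2}$, the convexity bound, and the absorption of the regularization term --- remains valid when a term equals $+\infty$. This is immediate, because the triangle inequality of \autoref{pr:ExprIsOp} was proven precisely for the extended-valued metric $\mebr_{[\dKt]}$, whence the asserted inequality holds trivially whenever its right-hand side is infinite.
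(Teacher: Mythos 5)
Your proposal is correct and follows exactly the paper's argument: the triangle inequality for $\mebr_{[\dKt]}$ with intermediate point $v_\diamond$, the convexity bound $(a+b)^{p_2}\le 2^{p_2-1}(a^{p_2}+b^{p_2})$, and the absorption of $\alpha\Reg_{[\dKo]}(w)$ using $2^{p_2-1}\ge 1$. Your extra remark about checking the manipulations in the extended reals is a minor refinement, not a different route.
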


\begin{proof}
	Using the fact that for $p \geq 1$ we have that $|a+b|^p \leq 2^{p-1}(|a|^p + |b|^p), \ a,b \in \R \cup \{\infty\}$ and that $\mebr_{[\dKt]}$ fulfills the triangle inequality we obtain
	\begin{align*}
		\F<v_\star><>[\dKt, \dKo](w) 
		& = \mebr[\op[w]][v_\star]_{[\dKt]}^{p_2} + \alpha \Reg_{[\dKo]}(w) \\
		& \leq 2^{p_2-1} \big( \mebr[\op[w]][v_\diamond]_{[\dKt]}^{p_2} + \mebr[v_\diamond][v_\star]_{[\dKt]}^{p_2} \big) + \alpha \Reg_{[\dKo]}(w) \\
		& \leq 2^{p_2-1}\big( \F<v_\diamond><>[\dKt, \dKo](w) + \mebr[v_\diamond][v_\star]_{[\dKt]}^{p_2} \big).
	\end{align*}
\end{proof}

\begin{thm} \label{thm:F_dK_has_a_minimizer} 
Let \autoref{as:Setting} hold. Then the functional $\F<v><\alpha>[\dKt, \dKo]: W(\Omega_1, K_1) \rarr [0, \infty]$ 
attains a minimizer.
\end{thm}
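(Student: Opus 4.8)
The plan is to apply the Direct Method in the Calculus of Variations, combining the sequential lower semi-continuity established in \autoref{thm:F_and_its_summands_are_seq_weakly_closed} with the sequential pre-compactness of level sets postulated in \autoref{as:Setting} and the sequential closedness of $\W<1>$ from \autoref{lem:Wsp_weakly_seq_closed_etc}. The one genuine obstacle is that the pre-compactness hypothesis in \autoref{as:Setting} is formulated only for the level sets of $\F<v^0><\alpha>[\dKt,\dKo]$ associated with the \emph{exact} data $v^0$, whereas the functional we actually wish to minimize involves the \emph{noisy} data $v$. Bridging this gap is exactly the role of \autoref{lem:Ineq_F_parameterchange_to_other_v}, which I would invoke in \emph{both} directions.

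I set $m \defeq \inf_{w \in \W<1>} \F<v><\alpha>[\dKt,\dKo](w)$. First I would check that $m < \infty$: picking any $w$ in the nonempty level set $\text{level}_{\bar t}(\F<v^0><\alpha>)$ and applying \autoref{lem:Ineq_F_parameterchange_to_other_v} with $v_\star = v$, $v_\diamond = v^0$ gives
\[
\F<v><\alpha>[\dKt,\dKo](w) \leq 2^{p_2-1}\bar t + 2^{p_2-1}\mebr[v^0][v]_{[\dKt]}^{p_2} < \pinfty,
\]
where finiteness of the last term is guaranteed by the hypothesis $\mebr[v][v^0]_{[\dKt]} < \infty$ in \autoref{as:Setting}. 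Hence $m < \infty$, and I may choose a minimizing sequence $(w_n)_{n \in \N}$ in $\W<1>$ with $\F<v><\alpha>[\dKt,\dKo](w_n) \to m$; in particular $\F<v><\alpha>[\dKt,\dKo](w_n) \leq m+1$ for all sufficiently large $n$.

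The crucial step is then to place this minimizing sequence inside a pre-compact set. Applying \autoref{lem:Ineq_F_parameterchange_to_other_v} the other way around, now with $v_\star = v^0$ and $v_\diamond = v$, yields
\[
\F<v^0><\alpha>[\dKt,\dKo](w_n) \leq 2^{p_2-1}\F<v><\alpha>[\dKt,\dKo](w_n) + 2^{p_2-1}\mebr[v][v^0]_{[\dKt]}^{p_2} \leq t_0
\]
for the finite constant $t_0 \defeq 2^{p_2-1}(m+1) + 2^{p_2-1}\mebr[v][v^0]_{[\dKt]}^{p_2}$. Thus the whole sequence lies in $\text{level}_{t_0}(\F<v^0><\alpha>[\dKt,\dKo])$, which by \autoref{as:Setting} is sequentially pre-compact in $W(\Omega_1, \R^{M_1})$. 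Extracting a subsequence $(w_{n_k})_{k \in \N}$ with $w_{n_k} \Wto w_*$ for some $w_* \in W(\Omega_1, \R^{M_1})$, \autoref{lem:Wsp_weakly_seq_closed_etc} guarantees $w_* \in \W<1>$. Finally, the sequential lower semi-continuity of $\F<v><\alpha>[\dKt,\dKo]$ from \autoref{thm:F_and_its_summands_are_seq_weakly_closed} gives
\[
\F<v><\alpha>[\dKt,\dKo](w_*) \leq \limi[k] \F<v><\alpha>[\dKt,\dKo](w_{n_k}) = m,
\]
and since $w_* \in \W<1>$ forces the reverse inequality $\F<v><\alpha>[\dKt,\dKo](w_*) \geq m$, I conclude that $w_*$ is a minimizer. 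The only point requiring real care is the bookkeeping of the two complementary applications of the parameter-change lemma; the remainder is the standard compactness-plus-lower-semi-continuity argument.
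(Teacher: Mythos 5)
Your proposal is correct and follows essentially the same route as the paper: both proofs run the Direct Method on a minimizing sequence, use \autoref{lem:Ineq_F_parameterchange_to_other_v} in both directions to transfer finiteness from the nonempty $v^0$-level set to the $v$-functional and to trap the minimizing sequence in a pre-compact $v^0$-level set, and then conclude via \autoref{lem:Wsp_weakly_seq_closed_etc} and the lower semi-continuity from \autoref{thm:F_and_its_summands_are_seq_weakly_closed}. The only cosmetic difference is that you verify finiteness of the infimum directly by evaluating at a point of $\text{level}_{\bar t}(\F<v^0><\alpha>)$, whereas the paper argues by contraposition; the substance is identical.
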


\begin{proof}
  We prove the existence of a minimizer via the Direct Method.
  We shortly write $\F^v$ for $\F<v><\alpha>[\dKt, \dKo]$.
  Let $(w_n)_{n \in \N}$ be a sequence in $W(\Omega_1, K_1)$ with
  \begin{gather}\label{eq:w_n_is_minimizing_seq}
    \lim_{n \rarr \infty }\F^v(w_n) = \inf_{w \in W(\Omega_1, K_1)} \F^v(w).
  \end{gather}
  The latter infimum is not $\pinfty$, because $\F^v \equiv \pinfty$ would imply also $\F^{v^0} \equiv \pinfty$ due to 
  \autoref{lem:Ineq_F_parameterchange_to_other_v}, violating \autoref{as:Setting}.
  In particular there is some $c \in \R$ such that 
  $\F^v(w_n) \leq c$ for every $n \in \N$. 
  Applying \autoref{lem:Ineq_F_parameterchange_to_other_v} yields
  $\F^{v^0}(w_n) \leq 2^{p_2-1} \big( \F^{v}(w_n) + \mebr[v][v^0] \big)\leq 2^{p_2-1} \big( c + \mebr[v][v^0] \big) \eqdef\tilde{c} < \infty$ due to \autoref{as:Setting}.
  Since the level set $\text{level}_{\tilde{c}}(\F^{v^0})$ is sequentially pre-compact with respect to the topology given to $W(\Omega_1, \R^{M_1})$ we get the existence of a
  subsequence $(w_{n_k})_{k \in \N}$ which converges to some $w_* \in W(\Omega_1, \R^{M_1})$, 
  where actually $w_* \in W(\Omega_1, K_1)$ due to \autoref{lem:Wsp_weakly_seq_closed_etc}.
  Because $\F^v$ is sequentially lower semi-continuous, see \autoref{thm:F_and_its_summands_are_seq_weakly_closed}, we have 
  $\F^v(w_*) \leq \liminf_{k \rarr \infty} \F^v(w_{n_k})$. Combining this with 
  \autoref{eq:w_n_is_minimizing_seq} we obtain 
  \begin{gather*}
    \inf_{w \in W(\Omega_1, K_1)} \F^v(w) \leq \F^v(w_*) \leq \liminf_{k \rarr \infty} \F^v(w_{n_k}) 
    = \lim_{n\rarr \infty} \F^v(w_n) = \inf_{w \in W(\Omega_1, K_1)} \F^v(w).
  \end{gather*}
  In particular $\F^v(w_*) = \inf_{w \in W(\Omega_1, K_1)}\limits \F^v(w)$, meaning that $w_*$ is a minimizer of $\F^v$.
\end{proof}

In the following we investigate two examples, which are relevant for the numerical examples in \autoref{sec:Numerical_results}.
\begin{example}
\label{ex:coercive}
We consider that 
$W(\Omega_1,K_1) = W^{s, p_1}(\Omega_1, K_1)$  
with $p_1>1, \ 0 < s < 1$ and fix $k = N$. 

If the operator $\op$ is norm-coercive in the sense that the implication
    \begin{equation} \label{eq:coercive_F}
    \norm{w_n}_{L^{p_1}(\Omega_1, \R^{M_1})} \rarr \pinfty \Rightarrow \norm{\op[w_n]}_{L^{p_2}(\Omega_2, \R^{M_2})} \rarr \pinfty 
    \end{equation}
    holds true for every sequence $(w_n)_{n \in \N}$ in $W^{s,p_1}(\Omega_1, K_1)\subseteq W^{s,p_1}(\Omega_1, \R^{M_1})$, then the functional 
    \begin{equation*}
    \F[\dKt, \dKo] = \mebr[\op[w]][v]^{p_2}_{[\dKt]} + \alpha \Reg_{[\dKo]}(w): W^{s,p_1}(\Omega_1, K_1) \rarr [0, \infty]
    \end{equation*}
    is coercive. This can be seen as follows: 
    
    The inequality between $\dKo$ and $\dRMo|_{K_1 \times K_1}$ resp. $\dKt$ and $\dRMt|_{K_2 \times K_2}$, see \autoref{ass:1}, carries over to 
    $\F[\dKt,\dKo]$ and $\F[\dRMt|_{K_2 \times K_2},\dRMo|_{K_1 \times K_1}]$, i.e.
    \begin{gather*}
     \F[\dKt, \dKo] (w) \geq \F\big[\dRMt|_{K_2 \times K_2},\dRMo|_{K_1 \times K_1}\big](w) \text{ for all } w \in W^{s,p_1}(\Omega_1, K_1).
    \end{gather*} 
    Thus it is sufficient to show that $\F[\dRMt|_{K_2 \times K_2},\dRMo|_{K_1 \times K_1}]: W^{s,p_1}(\Omega_1, K_1) \rarr [0, \infty]$ is coercive:    
    To prove this we write shortly $\F$ instead of $\F[\dRMt|_{K_2 \times K_2},\dRMo|_{K_1 \times K_1}]$ and consider sequences
    $(w_n)_{n \in \N}$ in $W^{s,p_1}(\Omega_1, K_1)$ with $\norm{w_n}_{W^{s,p_1}(\Omega_1, \R^{M_1})} \rarr \pinfty$ as $n \rarr \infty$. We 
    show that $\F(w_n) \rarr +\infty$, as $n \rarr \infty$.
    Since  
    \begin{equation*} 
     \norm{w_n}_{W^{s,p_1}(\Omega_1, \R^{M_1})} =  \big( \norm{w_n}_{L^{p_1}(\Omega_1, \R^{M_1})}^{p_1} + \abs{w_n}_{W^{s,p_1}(\Omega_1, \R^{M_1})}^{p_1} \big)^{\frac{1}{p_1}}
    \end{equation*}
    the two main cases to be considered are $\norm{w_n}_{L^{p_1}(\Omega_1, \R^{M_1})} \rarr +\infty$ and 
    $\abs{w_n}_{W^{s,p_1}(\Omega_1, \R^{M_1})} \rarr +\infty$.
   
    \begin{enumerate}[label=\textbf{Case \arabic*}]
    \item \label{ex: coecivity_case1}
    $\norm{w_n}_{L^{p_1}(\Omega_1, \R^{M_1})} \rarr +\infty$. \\
    The inverse triangle inequality and the norm-coercivity of $\op$, \autoref{eq:coercive_F}, give
    $\norm{\op[w_n] - v}_{L^{p_2}(\Omega_2, \R^{M_2})} \geq \norm{\op[w_n]}_{L^{p_2}(\Omega_2, \R^{M_2})} - \norm{v}_{L^{p_2}(\Omega_2, \R^{M_2})} \rarr \pinfty$.
    Therefore also 
    \begin{equation*}
     \F(w_n) = \norm{\op[w_n] - v}^{p_2}_{L^{p_2}(\Omega_2, \R^{M_2})} 
      + \alpha \int\limits_{\Omega_1\times \Omega_1} \frac{\normMo[w_n(x) - w_n(y)]^{p_1}}{\normN[x-y]^{N+p_1 s}} \rho^l(x-y) \dxy
    \rarr +\infty.
    \end{equation*}

  \noindent
  \item \label{ex: coecivity_case2}
  $\abs{w_n}_{W^{s,p_1}(\Omega_1, \R^{M_1})} \rarr +\infty$. \\
  If $l=0$, then $\Reg_{[\dKo]}$ is exactly the $W^{s,p_1}(\Omega_1, \R^{M_1})$-semi-norm $|w|_{W^{s,p_1}(\Omega_1, \R^{M_1})}$ and we trivially 
  get the desired result. 
  
  Hence we assume from now on that $l = 1$. 
  The assumptions on $\rho$ ensure that there exists a  $\tau > 0$ and $\etau > 0$ such that
  \begin{align*}
    \stripet \defeq {}&  \{(x,y) \in \Omega_1 \times \Omega_1 : \rho(x-y) \geq \tau \}
  \\
    = {}& \{(x,y) \in \Omega_1 \times \Omega_1 : \normN[x-y] \leq \etau \},
  \end{align*}
  cf. \autoref{fig:stripe}.

  Splitting $\Omega_1 \times \Omega_1$ into $\stripet \eqdef \stripe$ and its complement 
  $(\Omega_1 \times \Omega_1) \setminus \stripet \eqdef \Sc$
  we accordingly split the integrals 
  $\abs{w_n}_{W^{s,p_1}(\Omega_1, \R^{M_1})} = \int\limits_{\Omega_1 \times \Omega_1} \frac{\normMo[w_n(x) - w_n(y)]^{p_1}}{\normN[x-y]^{N+p_1 s}} \dxy$ 
  and consider again two cases 
  $\int\limits_{\stripe} \frac{\normMo[w_n(x) - w_n(y)]^{p_1}}{\normN[x-y]^{N+p_1 s}} \dxy  \rarr +\infty$ and 
  $\int\limits_{\Sc} \frac{\normMo[w_n(x) - w_n(y)]^{p_1}}{\normN[x-y]^{N+p_1 s}} \dxy \rarr +\infty$, respectively.

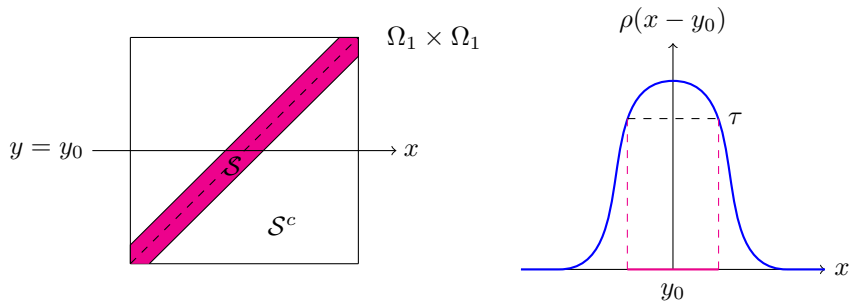
\begin{figure}[h!]
	\centering
	\begin{minipage}[t]{0.35\linewidth}
		\vspace*{-110pt}
		\begin{tikzpicture}
		\draw (0,0) -- (3,0);
		\draw (3,0) -- (3,3);
		\draw (3,3) -- (0,3);
		\draw (0,3) -- (0,0);
		\draw [ fill = magenta] (0.25,0) -- (3, 2.75) --(3,3)--(2.75, 3) -- (0,0.25) --(0,0);
		\draw [dashed] (0,0) -- (3,3);
		\node at (1.35,1.3) {$\stripe$};
		\node at (2.0,0.5) {$\Sc$};
		\node at (4,3) {$\Omega_1 \times \Omega_1$};
		\draw[->] (-0.5,1.5) -- (3.5,1.5);
		\node at (3.7,1.5){$x$};
		\node at (-1.1,1.5){$y=y_0$};
		\end{tikzpicture}
	\end{minipage}
	\hspace{1cm}
	\begin{minipage}[t]{0.35\linewidth}
		\begin{tikzpicture}
		\draw[->] (-2,0) -- (2,0);
		\node[right] at (2,0) {$x$};
		\draw[->](0,0) -- (0,3);
		\node[above] at (0,3) {$\rho(x-y_0)$};
		\draw[ thick, blue, out=0, in=180] (0,2.5) to (1.5,0);
		\draw[ thick, blue, out=180, in=0] (0,2.5) to (-1.5,0);
		\draw[thick, blue] (1.5,0) -- (2,0);
		\draw[thick, blue] (-1.5,0) -- (-2,0);	
		\draw[dashed] (-0.6,2) -- (0.6,2);		
		\node[right] at (0.6,2) {$\tau$};
		\draw[thick, magenta] (-0.6,0) -- (0.6,0);		
		\draw[dashed, magenta] (-0.6,0) to (-0.6,2);
		\draw[dashed, magenta] (0.6,0) to (0.6,2);
		\node at (0,-0.3){$y_0$};
		\end{tikzpicture}
	\end{minipage}
	\caption{The stripe $\stripe = \stripet$ if $\Omega_1$ is an open interval and its connection to the radial mollifier $\rho$ for fixed $y \in \Omega_1$.}
	\label{fig:stripe}
\end{figure}
  
  \begin{enumerate}[label=\textbf{\ref{ex: coecivity_case2}.\arabic*}]
  \item	
  $\int\limits_{\stripe} \frac{\normMo[w_n(x) - w_n(y)]^{p_1}}{\normN[x-y]^{N+p_1 s}} \dxy \rarr + \infty$. \\
  By definition of $\stripe$ we have $\rho(x-y) \geq \tau > 0$ for all $(x,y) \in \stripe$.
  Therefore
  \begin{gather*}
    \int\limits_{\stripe} \frac{\normMo[w_n(x) - w_n(y)]^{p_1}}{\normN[x-y]^{N+p_1 s}} \rho(x-y) \dxy 
    \geq \tau \int\limits_{\stripe} \frac{\normMo[w_n(x) - w_n(y)]^{p_1}}{\normN[x-y]^{N+p_1 s}} \dxy
    \rarr \pinfty.
  \end{gather*}
  Since $\alpha > 0$, it follows 
  \begin{align*}
    \F(w_n) &= \norm{\op(w_n) - v}^{p_2}_{L^{p_2}(\Omega_2, \R^{M_2})}
      + \underbrace{\alpha \int\limits_{\stripe} \frac{\normMo[w_n(x) - w_n(y)]^{p_1}}{\normN[x-y]^{N+p_1 s}} \rho(x-y) \dxy 
         }_{\rarr +\infty} \\
    & +  \underbrace{ \alpha \int\limits_{\Sc} \frac{\normMo[w_n(x) - w_n(y)]^{p_1}}{\normN[x-y]^{N+p_1 s}} \rho(x-y) \dxy 
         }_{\geq 0}
    \rarr +\infty.
  \end{align*}

  \noindent
  \item	\label{ex: coecivity_case22}
  $\int\limits_{\Sc} \frac{\normMo[w_n(x) - w_n(y)]^{p_1}}{\normN[x-y]^{N+p_1 s}} \dxy \rarr + \infty$.\\
  For $(x, y) \in \Sc$ it might happen that $\rho(x-y) = 0$, and thus instead of proving 
  $\F(w_n) \geq \int\limits_{\Sc} \frac{\normMo[w_n(x) - w_n(y)]^{p_1}}{\normN[x-y]^{N+p_1 s}} \rho(x-y) \dxy \rarr \pinfty$,
  as in Case 2.1, we rather show that $\F(w_n) \geq \norm{\op[w_n] - v}^{p_2}_{L^{p_2}(\Omega_2, \R^{M_2})} \rarr \pinfty$.
  For this it is sufficient to show that for every $c > 0$ there is some $C \in \R$ such that the implication
  \begin{gather*}
     \norm{\op[w]-v}^{p_2}_{L^{p_2}(\Omega_2, \R^{M_2})}\leq c
     \implies
      \int\limits_{\Sc} \frac{\normMo[w(x) - w(y)]^{p_1}}{\normN[x-y]^{N+p_1 s}}  \dxy \leq C,
  \end{gather*}
  holds true for all $w \in W^{s,p_1}(\Omega_1, K_1) \subseteq W^{s,p_1}(\Omega_1, \R^{M_1})$.
  To this end let $c > 0$ be given and consider an arbitrarily chosen $w \in W^{s,p_1}(\Omega_1, K_1)$ 
  fulfilling $\norm{\op[w] - v}^{p_2}_{L^{p_2}(\Omega_2, \R^{M_2})} \leq c$.
  
  Then $\norm{\op[w] - v}_{L^{p_2}(\Omega_2, \R^{M_2})} \leq \sqrt[p_2]{c}$. Using the triangle inequality and the monotonicity
  of the function $h: t \mapsto t^{p_2}$ on $[0, \pinfty)$ we get further
  \begin{align}\label{eq: Norm_estimate}
    \norm{\op[w]}^{p_2}_{L^{p_2}(\Omega_2, \R^{M_2})} 
    &= \norm{\op[w] - v + v}^{p_2}_{L^{p_2}(\Omega_2, \R^{M_2})} \nonumber\\
    &\leq \left( \norm{\op[w] - v}_{L^{p_2}(\Omega_2, \R^{M_2})} + \norm{v}_{L^{p_2}(\Omega_2, \R^{M_2})} \right)^{p_2} \nonumber\\
    & \leq \big( \sqrt[p_2]{c} + \norm{v}_{L^{p_2}(\Omega_2, \R^{M_2})} \big)^{p_2} \eqdef \tilde{c}. 
  \end{align}
  \noindent
  Due to the norm-coercivity, it thus follows 
  that $\norm{w}_{L^{p_1}(\Omega_1, \R^{M_1})} \leq \bar{c}$, $\bar{c}$ some constant. 
  Using \cite[Lemma 3.20]{SchGraGroHalLen09} it then follows that 
  \begin{gather}\label{eq: Convexity_Inequality}
    \normMo[w(x) - w(y)]^{p_1} \leq 2^{p_1-1} \normMo[w(x)]^{p_1} + 2^{p_1-1} \normMo[w(y)]^{p_1} 
  \end{gather}
  for all $(x,y) \in \Omega_1 \times \Omega_1$.
  Using \autoref{eq: Convexity_Inequality}, Fubini's Theorem and \autoref{eq: Norm_estimate} we obtain
  \begin{align*}
    \int\limits_{\Omega_1 \times \Omega_1} \normMo[w(x) - w(y)]^{p_1} \dxy
    & \leq \int\limits_{\Omega_1 \times \Omega_1}  2^{p_1-1} \normMo[w(x)]^{p_1} + 2^{p_1-1}\normMo[w(y)]^{p_1} \dxy \\
    &= \abs{\Omega_1} \int\limits_{\Omega_1}  2^{p_1-1} \normMo[w(x)]^{p_1} \dx + \abs{\Omega_1} \int\limits_{\Omega_1} 2^{p_1-1}\normMo[w(y)]^ {p_1} \dy \\
    &= 2\abs{\Omega_1} \int\limits_{\Omega_1}  2^{p_1-1} \normMo[w(x)]^{p_1}  \dx \\
    & = 2^{p_1} \abs{\Omega_1} \; \norm{w}^{p_1}_{L^{p_1}(\Omega_1, \R^{M_1})} \leq 2^{p_1} \abs{\Omega_1} \bar{c}^{p_1}.
  \end{align*}

  Combining $\normN[x-y] \geq \etau > 0$ for all $(x,y) \in \Sc$ with the previous inequality we obtain the needed estimate
  \begin{align*}
    \int\limits_{\Sc} \frac{\normMo[w(x) - w(y)]^{p_1}}{\normN[x-y]^{N+p_1 s}}  \dxy
    & \leq \frac{1}{\etau^{N+p_1 s}} \int\limits_{\Sc} \normMo[w(x) - w(y)]^{p_1}  \dxy 
  \\
    & \leq \frac{1}{\etau^{N+p_1 s}} \int\limits_{\Omega_1 \times \Omega_1} \normMo[w(x) - w(y)]^{p_1}  \dxy
  \\
    & \leq \frac{2^{p_1} \abs{\Omega_1} \bar{c}^{p_1}}{\etau^{N+p_1 s}} \eqdef C. 
  \end{align*}
  \end{enumerate}
  \end{enumerate}
  \end{example}
  
  The second example concerns the coercivity of $\F[\dKt,\dKo]$, defined in \autoref{eq:functional}, when 
  $\op$ denotes the \emph{masking operator} occurring in image inpainting. To prove this result we require the following auxiliary lemma:
  \begin{lemma}\label{lem. auxLemma}
     There exists a constant $C \in \R $ such that for all $w \in W^{s,p_1}(\Omega_1, \R^{M_1}), \ 0<s < 1, \ l \in \{0,1\}, \ 1 < p_1 < \infty$ and
     $D \subsetneq \Omega_1$ nonempty such that
  \begin{equation} \label{eq: coercivity_inpainting}
  \norm{w}_{L^{p_1}(D, \R^{M_1})}^{p_1} \leq C \left( \norm{w}_{L^{p_1}(\Omega_1 \setminus D, \R^{M_1})}^{p_1} + 
  \int\limits_{\Omega_1 \times \Omega_1} \frac{\normMo[w(x) - w(y)]^{p_1}}{\normN[x-y]^{N+p_1 s}} \rho^l(x-y) \dxy  \right).
  \end{equation}
  \end{lemma}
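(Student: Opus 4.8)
The plan is to prove the asserted Poincaré-type inequality by a compactness–contradiction argument, treating $s\in(0,1)$, $p_1\in(1,\infty)$, $l\in\{0,1\}$ and the set $D\subsetneq\Omega_1$ (with $\abs{\Omega_1\setminus D}>0$) as fixed data, so that $C$ is allowed to depend on them; the statement merely records the admissible ranges. Abbreviate by $I_l(w)$ the double integral appearing on the right-hand side. Suppose no admissible constant exists. Then for each $n\in\N$ there is a violator $w_n\in W^{s,p_1}(\Omega_1,\R^{M_1})$ for which the inequality fails with $C=n$; necessarily $\norm{w_n}_{L^{p_1}(D,\R^{M_1})}>0$, since otherwise the left-hand side vanishes. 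After the normalization $\tilde w_n\defeq w_n/\norm{w_n}_{L^{p_1}(D,\R^{M_1})}$ one obtains $\norm{\tilde w_n}_{L^{p_1}(D,\R^{M_1})}=1$ together with $\norm{\tilde w_n}_{L^{p_1}(\Omega_1\setminus D,\R^{M_1})}^{p_1}\to 0$ and $I_l(\tilde w_n)\to 0$ as $n\to\infty$.

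First I would show that $(\tilde w_n)$ is bounded in $W^{s,p_1}(\Omega_1,\R^{M_1})$. The $L^{p_1}$-part is immediate, since $\norm{\tilde w_n}_{L^{p_1}(\Omega_1,\R^{M_1})}^{p_1}=1+\norm{\tilde w_n}_{L^{p_1}(\Omega_1\setminus D,\R^{M_1})}^{p_1}$ stays bounded. For the seminorm there are two cases. If $l=0$, then $I_0=\abs{\cdot}_{W^{s,p_1}}^{p_1}$ and $I_0(\tilde w_n)\to 0$ controls it directly. If $l=1$ I would invoke property \textit{(v)} of \autoref{ass:1}: there are $\tau,\etau>0$ with $\rho\ge\tau$ on $\set{\normN[z]\le\etau}$. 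Splitting $\Omega_1\times\Omega_1$ along $\normN[x-y]=\etau$, the near part is dominated by $\tfrac1\tau I_1(\tilde w_n)$, while on the far part $\normN[x-y]^{N+p_1 s}\ge\etau^{N+p_1 s}$ and the estimate $(a+b)^{p_1}\le 2^{p_1-1}(a^{p_1}+b^{p_1})$ bounds it by a multiple of $\norm{\tilde w_n}_{L^{p_1}(\Omega_1,\R^{M_1})}^{p_1}$ — exactly the near/far splitting already used in \autoref{pr:ExprIsOp}. Hence $\abs{\tilde w_n}_{W^{s,p_1}}^{p_1}$ is bounded in either case.

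By the compact embedding $W^{s,p_1}(\Omega_1,\R^{M_1})\hookrightarrow L^{p_1}(\Omega_1,\R^{M_1})$ of \autoref{le:inclusion} (with the subsequence extraction of \autoref{re:notes_basic}), a subsequence $\tilde w_{n_k}$ converges strongly in $L^{p_1}(\Omega_1,\R^{M_1})$, and after a further subsequence pointwise a.e., to some $w_*$. Fatou's lemma applied to the integrand of $I_l$ then yields $I_l(w_*)\le\liminf_{k\to\infty}I_l(\tilde w_{n_k})=0$. Since the weight is strictly positive for $\normN[x-y]\le\etau$ (case $l=1$), resp. everywhere (case $l=0$), this forces $w_*(x)=w_*(y)$ for a.e.\ pair with $\normN[x-y]\le\etau$; for $l=0$ this is global, for $l=1$ one covers the connected open set $\Omega_1$ by a chain of balls on each of which $w_*$ is a.e.\ constant and matches the values across the positive-measure overlaps, so in both cases $w_*\equiv c$ a.e.\ for a single $c\in\R^{M_1}$. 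Strong $L^{p_1}$-convergence also gives $\norm{w_*}_{L^{p_1}(\Omega_1\setminus D,\R^{M_1})}=0$, so $c=0$ (this is where $\abs{\Omega_1\setminus D}>0$ enters). But then $\norm{w_*}_{L^{p_1}(D,\R^{M_1})}=0$, contradicting $\norm{w_*}_{L^{p_1}(D,\R^{M_1})}=\lim_k\norm{\tilde w_{n_k}}_{L^{p_1}(D,\R^{M_1})}=1$. This contradiction proves the existence of $C$.

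I expect the main obstacle to be the case $l=1$: because $\rho$ is compactly supported, $I_1$ only ``sees'' pairs $(x,y)$ that are close, so one must both (i) recover boundedness of the full $W^{s,p_1}$-seminorm from this localized quantity — handled by the near/far splitting via $\rho\ge\tau$ on a ball and a crude bound on the complement — and (ii) upgrade the purely local identity $w_*(x)=w_*(y)$ for $\normN[x-y]\le\etau$ to global constancy, which is exactly where connectedness of $\Omega_1$ and the chaining/covering argument are essential. The remaining ingredients (normalization, boundedness of the $L^{p_1}$-part, the compact embedding, and the Fatou lower-semicontinuity step) are routine.
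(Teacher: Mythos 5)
Your proposal is correct and follows essentially the same route as the paper's proof: a contradiction/normalization argument, boundedness of the normalized sequence in $W^{s,p_1}$ via the near/far splitting along the stripe where $\rho\ge\tau$, the compact embedding into $L^{p_1}$ with pointwise a.e.\ convergence, and identification of the limit as a constant (forced to be $0$ by the vanishing norm on $\Omega_1\setminus D$, contradicting the unit norm on $D$). Your explicit remarks on the role of connectedness, the chaining of balls, and the need for $\abs{\Omega_1\setminus D}>0$ only make explicit what the paper leaves implicit.
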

  \begin{proof}
   The proof is inspired by the proof of Poincaré's inequality in \cite{Eva10}. It is included here for the sake of completeness.

   Assume first that $l=1$. Let $\stripe$ be as above, 
   \begin{align*}
   \stripe \defeq {}&  \{(x,y) \in \Omega_1 \times \Omega_1 : \rho(x-y) \geq \tau \}
   \\
   = {}& \{(x,y) \in \Omega_1 \times \Omega_1 : \normN[x-y] \leq \eta \}.
   \end{align*} 
   If the stated inequality \autoref{eq: coercivity_inpainting} would be false, then for every $n \in \N$ there would exists a 
   function $w_n \in W^{s,p_1}(\Omega_1, \R^{M_1})$ satisfying
   \begin{equation}\label{eq: inpaintingContraEquation}
   \norm{w_n}_{L^{p_1}(D, \R^{M_1})}^{p_1} \geq n \big( \norm{w_n}_{L^{p_1}(\Omega_1 \setminus D, \R^{M_1})}^{p_1} + 
   \int\limits_{\Omega_1 \times \Omega_1} \frac{\normMo[w_n(x) - w_n(y)]^{p_1}}{\normN[x-y]^{N+p_1 s}} \rho(x-y) \dxy  \big).
   \end{equation}
   By normalizing we can assume without loss of generality 
   \begin{enumerate}
   	\item $\norm{w_n}_{L^{p_1}(D, \R^{M_1})}^{p_1} = 1$. \label{itm: inpainting1}
   \end{enumerate}
   Moreover, by \autoref{eq: inpaintingContraEquation}
   \begin{enumerate}
   	\setcounter{enumi}{1}
   	\item  $\norm{w_n}_{L^{p_1}(\Omega_1 \setminus D, \R^{M_1})}^{p_1} < \frac{1}{n}$, \label{itm: inpainting2}
   	\item $\int\limits_{\Omega_1 \times \Omega_1} \frac{\normMo[w_n(x) - w_n(y)]^{p_1}}{\normN[x-y]^{N+p_1 s}} \rho(x-y) \dxy < \frac{1}{n}$. \label{itm: inpainting3}
   \end{enumerate}
   
   By \autoref{itm: inpainting1} and \autoref{itm: inpainting2} we get that 
   $ \norm{w_n}_{L^{p_1}(\Omega_1, \R^{M_1})}^{p_1} = \norm{w_n}_{L^{p_1}(D, \R^{M_1})}^{p_1} + \norm{w_n}_{L^{p_1}(\Omega_1 \setminus D, \R^{M_1})}^{p_1} < 1 + \frac{1}{n} < 2 $
   is bounded. Moreover
   \begin{align*}
   \abs{w_n}^{p_1}_{W^{s,p_1}(\Omega_1, \R^{M_1})} 
   & = \int\limits_{\stripe} \frac{\normMo[w_n(x) - w_n(y)]^{p_1}}{\normN[x-y]^{N+p_1 s}} \dxy
   + \int\limits_{\stripe^c} \frac{\normMo[w_n(x) - w_n(y)]^{p_1}}{\normN[x-y]^{N+p_1 s}} \dxy \\
   & \leq \frac{1}{\tau} \int\limits_{\stripe} \frac{\normMo[w_n(x) - w_n(y)]^{p_1}}{\normN[x-y]^{N+p_1 s}} \rho(x-y) \dxy
   + \frac{2^p_1 \abs{\Omega_1}}{\eta^{N+p_1 s}}\norm{w_n}^{p_1}_{L^{p_1}(\Omega_1, \R^{M_1})} \\
   & < \frac{1}{\tau n} + \frac{2^{p_1+1} \abs{\Omega_1}}{\eta^{N+p_1 s}}
   \leq \frac{1}{\tau} + \frac{2^{p_1+1} \abs{\Omega_1}}{\eta^{N+p_1 s}} \eqdef c < \infty,
   \end{align*}
   where $c$ is independent of $n$.
   This yields that the sequence  
   $(w_n)_{n \in \N}$ is bounded in $W^{s,p_1}(\Omega_1, \R^{M_1})$ by $(2 + c)^{\frac{1}{p_1}}$. 
   By the reflexivity of $\Wsp<1>[][\R^{M_1}]$ for $p_1 \in (1, \infty)$ and \autoref{lem:Wsp_weakly_seq_closed_etc} 
   there exists a subsequence $(w_{n_k})_{k \in \N}$ of $(w_n)_{n \in \N}$  
   and $w_* \in W^{s,p_1}(\Omega_1, \R^{M_1})$
   such that $w_{n_k} \rarr w^*$ strongly in $L^{p_1}(\Omega_1, \R^{M_1})$ and 
   pointwise almost everywhere.

   Using the continuity of the norm and dominated convergence we obtain
   \begin{enumerate}
   	\item $\norm{w^*}_{L^{p_1}(D, \R^{M_1})}^{p_1} = 1$, in particular $w^*$ is not the null-function on D,
   	\item $\norm{w^*}_{L^{p_1}(\Omega_1 \setminus D, \R^{M_1})}^{p_1} = 0$ since $n \in \N$ is arbitrary and hence $w^* \equiv 0$ on $\Omega_1 \setminus D$.
   	\item \begin{equation*}
   	\liminf_{n \rarr \infty} \frac{1}{n} 
   	> \liminf_{n \rarr \infty} \int\limits_{\stripe} \frac{\normMo[w_n(x) - w_n(y)]^{p_1}}{\normN[x-y]^{N+p_1 s}} \rho(x-y) \dxy 
   	\geq \frac{\tau}{\eta^{N+p_1 s}} \int\limits_{\stripe} \normMo[w^*(x) - w^*(y)]^{p_1},
   	\end{equation*}
   	i.e. $w^*(x) = w^*(y) $ for $(x,y) \in \stripe$ yielding that $w^*$ locally constant and hence even constant since $\Omega_1$ is connected,
   \end{enumerate}
   which gives the contradiction. 
   
   In the case $l=0$ we use similar arguments, where the distance $\normN[x-y]$ in the last inequality can be estimated by $\mathrm{diam}|\Omega_1|$ (instead of $\eta$) since $\Omega_1$ is bounded.
  \end{proof}
  
  \begin{remark}
  	In case $l=1$ it follows that the sharper inequality holds true: 
  	There exists a constant $C \in \R $ such that for all $w \in W^{s,p_1}(\Omega_1, \R^{M_1}), \ 0<s < 1, \ 1 < p_1 < \infty$ and $D \subsetneq \Omega_1$ nonempty such that
  	\begin{equation}\label{eq: inpaintingIneq2}
  	\norm{w}_{L^{p_1}(D, \R^{M_1})}^{p_1} \leq C \left( \norm{w}_{L^{p_1}(\Omega_1 \setminus D, \R^{M_1})}^{p_1} + 
  	\int\limits_{\stripe} \frac{\normMo[w(x) - w(y)]^{p_1}}{\normN[x-y]^{N+p_1 s}} \rho^l(x-y) \dxy  \right).
  	\end{equation} 
  \end{remark}
  
  \begin{example} 
  \label{ex:in}
  As in \autoref{ex:coercive} we consider that 
  $W(\Omega_1,K_1) = W^{s, p_1}(\Omega_1, K_1)$  
  with $p_1>1, \ 0 < s < 1$ and fix $k = N$. 
  
  Assume that $\op$ is the inpainting operator, i.e. 
  \begin{equation*}
    \op(w) = \chi_{\Omega_1 \backslash D} (w),
  \end{equation*} 
  where $D \subseteq \Omega_1, \ w \in W^{s,p_1}(\Omega_1, K_1)$. Since the dimension of the data $w$ and the image data $\op(w)$ have the same dimension at every point $x \in \Omega_1$, we write $M \defeq M_1 = M_2$. \\
  Then the functional 
  \begin{equation*}
  \F[\dKt, \dKo] = \mebr[\op[w]][v]^{p_2}_{[\dKt]} + \alpha \Reg_{[\dKo]}(w): W^{s,p_1}(\Omega_1, K_1) \rarr [0, \infty]
  \end{equation*}
  is coercive for $p_2 \geq p_1$: \\
  The fact that $p_2 \geq p_1$ and that $\Omega_1$ is bounded ensures that
  \begin{equation}\label{eq: LpEmbedding}
  L^{p_2}(\Omega_1 \backslash D, \R^M) \subseteq L^{p_1}(\Omega_1 \backslash D, \R^M).
  \end{equation} 
  The proof is done
  using the same arguments as in the proof of \autoref{ex:coercive}, where we additionally split \ref{ex: coecivity_case1} into the two sub-cases 
  \begin{enumerate}[label=\textbf{\ref{ex: coecivity_case1}.\arabic*}]
  	\item \label{ex: coecivity_case11}
  	$\norm{w_n}_{L^{p_1}(D, \R^M)} \rarr \pinfty$
  	\item \label{ex: coecivity_case12} 
  	$\norm{w_n}_{L^{p_1}(\Omega_1 \setminus D, \R^M)} \rarr \pinfty$
  \end{enumerate}
   and using additionally \autoref{lem. auxLemma}, \autoref{eq: inpaintingIneq2} and \autoref{eq: LpEmbedding}.    
\end{example}

\section{Stability and Convergence} \label{sec: Stability_and_Convergence}
In this section we will first show a stability and afterwards a convergence result. We use the notation introduced in 
\autoref{sec:  Setting}. In particular $W(\Omega_1, K_1)$ is as defined in \autoref{eq:ChooseW}. 
We also stress that we use 
notationally simplified versions $\F<v>$ of $\F<v><\alpha>[\dKt, \dKo]$ and $\Reg$ of $\Reg_{[\dKo]}$ whenever possible.
See \autoref{eq: functional_with_some_metric}, \autoref{eq:d2} and \autoref{eq:d3}.

\begin{thm} \label{thm:Stability}
Let \autoref{as:Setting} be satisfied.
Let $v^\delta \in L^{p_2}(\Omega_2, K_2)$ and let $(v_n)_{n \in \N}$ be a sequence in $L^{p_2}(\Omega_2, K_2)$ such that
$\mebr[v_n][v^\delta]_{[\dKt]} \rarr 0$.
Then every sequence $\seq{w}$ with
  \begin{equation*}
    w_n \in \arg \min \{ \F<v_n><\alpha>[\dKt, \dKo](w) \ : \ w \in W(\Omega_1, K_1) \}
  \end{equation*}	
  has a converging subsequence w.r.t. the topology of $W(\Omega_1, K_1)$.
  The limit $\tilde{w}$ of any such converging subsequence $(w_{n_k})_{k \in \N}$ is a minimizer of 
  $\F^{v^\delta}[\dKt, \dKo]$.
  Moreover, $(\Reg(w_{n_k}))_{k \in \N}$ converges to $\Reg(\tilde{w})$.
\end{thm}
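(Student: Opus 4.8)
The plan is to run a Direct-Method stability argument. First I would record that each $w_n$ is well defined: since $\mebr[v_n][v^\delta]_{[\dKt]} \to 0$ and $\mebr[v^\delta][v^0]_{[\dKt]} < \infty$ by \autoref{as:Setting}, the triangle inequality for $\mebr_{[\dKt]}$ gives $\mebr[v_n][v^0]_{[\dKt]} < \infty$, so \autoref{thm:F_dK_has_a_minimizer} guarantees that $\F<v_n>$ attains its minimum. The whole strategy is then to trap $(w_n)$ inside a single sequentially pre-compact level set of the reference functional $\F<v^0>$, extract a subsequence convergent in $W(\Omega_1,\R^{M_1})$, and identify its limit as a minimizer of the limiting functional $\F<v^\delta>$ using the lower semi-continuity statements of \autoref{thm:F_and_its_summands_are_seq_weakly_closed}. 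Throughout I use the simplified notation $\F<v>$ and $\Reg$.

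For the uniform bound, fix a comparison element $\bar w$ in the nonempty level set from \autoref{as:Setting}. Minimality gives $\F<v_n>(w_n) \le \F<v_n>(\bar w)$, and applying $(a+b)^{p_2} \le 2^{p_2-1}(a^{p_2}+b^{p_2})$ to the triangle inequality for $\mebr_{[\dKt]}$ bounds the fidelity part of $\F<v_n>(\bar w)$ by $2^{p_2-1}\big(\mebr[\op[\bar w]][v^\delta]_{[\dKt]}^{p_2} + \mebr[v^\delta][v_n]_{[\dKt]}^{p_2}\big)$, which stays bounded because $\mebr[v_n][v^\delta]_{[\dKt]} \to 0$. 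Thus $\F<v_n>(w_n) \le c$ uniformly. Feeding this into \autoref{lem:Ineq_F_parameterchange_to_other_v} with $v_\star = v^0$, $v_\diamond = v_n$ yields $\F<v^0>(w_n) \le 2^{p_2-1}\big(\F<v_n>(w_n) + \mebr[v_n][v^0]_{[\dKt]}^{p_2}\big) \le \tilde c$, again uniformly, since $\mebr[v_n][v^0]_{[\dKt]}$ is bounded. Hence $(w_n)$ lies in $\text{level}_{\tilde c}(\F<v^0>)$, which is sequentially pre-compact in $W(\Omega_1,\R^{M_1})$; I extract $w_{n_k} \to \tilde w$ and use \autoref{lem:Wsp_weakly_seq_closed_etc} to conclude $\tilde w \in W(\Omega_1,K_1)$ with $w_{n_k} \to \tilde w$ also in $\Lp<1>$.

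To identify $\tilde w$ as a minimizer, note that $\op$ is sequentially continuous (\autoref{as:Setting}), so $\op[w_{n_k}] \to \op[\tilde w]$ in $\Lp<2>$, while $\mebr[v_n][v^\delta]_{[\dKt]} \to 0$ forces $v_{n_k} \to v^\delta$ in $\Lp<2>$ because $\dRMt[K_2] \le d_2$. Joint lower semi-continuity of $\mebr_{[\dKt]}$ in both arguments (\autoref{thm:F_and_its_summands_are_seq_weakly_closed}, part \autoref{enu:continuity_of_mebr}) together with lower semi-continuity of $\Reg$ then gives $\F<v^\delta>(\tilde w) \le \limi[k] \F<v_{n_k}>(w_{n_k})$. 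For any competitor $w$ with finite energy, the reverse triangle inequality shows $\mebr[\op[w]][v_{n_k}]_{[\dKt]} \to \mebr[\op[w]][v^\delta]_{[\dKt]}$, hence $\F<v_{n_k}>(w) \to \F<v^\delta>(w)$, and minimality $\F<v_{n_k}>(w_{n_k}) \le \F<v_{n_k}>(w)$ passes to the limit to give $\F<v^\delta>(\tilde w) \le \F<v^\delta>(w)$; competitors with infinite energy are trivial. Thus $\tilde w$ minimizes $\F<v^\delta>$.

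Finally, taking $w = \tilde w$ in the previous chain collapses every inequality to an equality, so $\F<v_{n_k}>(w_{n_k}) \to \F<v^\delta>(\tilde w)$. I then close with the standard splitting of the two nonnegative summands $a_k \defeq \mebr[\op[w_{n_k}]][v_{n_k}]_{[\dKt]}^{p_2}$ and $b_k \defeq \alpha \Reg(w_{n_k})$: their sum converges to $a_* + b_*$ where $a_* \le \liminf_k a_k$ and $b_* \le \liminf_k b_k$ by the two separate lower semi-continuity estimates, so $\lims[k] b_k = \lims[k]\big((a_k+b_k)-a_k\big) \le (a_*+b_*) - \liminf_k a_k \le b_*$, forcing $b_k \to b_* = \alpha \Reg(\tilde w)$ and hence $\Reg(w_{n_k}) \to \Reg(\tilde w)$. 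The main obstacle is exactly this last stretch: because the data $v_{n_k}$ and the argument $w_{n_k}$ move simultaneously, no one-sided estimate in a frozen argument is available, and I must rely on the \emph{joint} lower semi-continuity of $\mebr_{[\dKt]}$ plus the metric continuity $v_{n_k} \to v^\delta$; the splitting step further needs both summands to be lower semi-continuous individually, so that a convergent total energy cannot mask a trade-off between a rising regularizer and a falling fidelity.
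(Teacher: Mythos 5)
Your proposal is correct and follows essentially the same route as the paper: a uniform bound trapping $(w_n)$ in a pre-compact level set of $\F<v^0>$ via \autoref{lem:Ineq_F_parameterchange_to_other_v}, extraction of a convergent subsequence, identification of the limit through the joint lower semi-continuity of $\mebr_{[\dKt]}$ and of $\Reg$ combined with the convergence $\F<v_{n_k}>(w)\to\F<v^\delta>(w)$ for finite-energy competitors, and finally the collapse of the inequality chain at $w=\tilde w$ to force $\Reg(w_{n_k})\to\Reg(\tilde w)$. The only cosmetic differences are that you bound $\F<v_n>(\bar w)$ directly rather than chaining the comparison lemma twice, and you phrase the last step as a generic two-summand lower semi-continuity splitting, which is exactly what the paper's equality chain encodes.
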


The subsequent proof of \autoref{thm:Stability} is similar to the proof of \cite[Theorem 3.23]{SchGraGroHalLen09}.

\begin{proof}
For the ease of notation we simply write $\F<v^\delta>$ instead of $\F<v^\delta><\alpha>[\dKt, \dKo]$ and 
$\mebr[v][\tilde{v}] = \mebr[v][\tilde{v}]_{[\dKt]}$ 

By assumption the sequence $(\mebr[v_n][v^\delta])_{n\in \N}$ converges to $0$ and thus is bounded, i.e., 
there exists $B \in (0, \pinfty)$ such that 
\begin{gather} \label{eq:seq_bounded_IN_stability_proof}
  \mebr[v_n][v^\delta] \leq B \text{ for all } n \in \N.
\end{gather}
Because $w_n \in \arg \min \{ \F<v_n>(w)  :  w \in W(\Omega_1, K_1) \}$ it follows that 
\begin{equation}\label{eq: w_n Minimizer}
  \F<v_n>(w_n) \leq \F<v_n>(w) \text{ for all } w \in W(\Omega_1, K_1).
\end{equation}
By \autoref{as:Setting} there is a $\overline{w} \in W(\Omega_1, K_1)$ such that $\F^{v^0}(\overline{w}) <\infty$. Set $c \defeq 2^{p_2-1}$.  
Using \autoref{as:Setting} and applying \autoref{lem:Ineq_F_parameterchange_to_other_v}, 
\autoref{eq: w_n Minimizer} and \autoref{eq:seq_bounded_IN_stability_proof} implies that for all $n \in \N$
\begin{align*}
  \F<v^\delta> (w_n)  
  & \leq c \F<v_n>(w_n) + c \mebr[v_n][v^\delta]^{p_2}   
\\
  & \leq c \F<v_n>(\overline{w}) + c B^{p_2}
\\
  & \leq c \big[c \F<v^\delta>(\overline{w}) + c \mebr[v^\delta][v_n]^{p_2}  \big] + cB^{p_2}
\\
  & \leq c^2 \F<v^\delta>(\overline{w}) + (c^2 + c)B^{p_2}
\\
  & \leq c^3 \big( \F<v^0>(\overline{w}) + \mebr[v^0][v^\delta] \big) + (c^2 + c)B^{p_2} \eqdef m < \infty.
\end{align*}
Applying again \autoref{lem:Ineq_F_parameterchange_to_other_v} we obtain
$\F<v^0> (w_n) \leq c \F<v^\delta> (w_n) + c \mebr[v^\delta][v^0]^{p_2} \leq m + c \mebr[v^\delta][v^0]^{p_2} \eqdef \widetilde m < \infty$.
Hence, from item \eqref{itm: A} it follows that the sequence $\seq{w}$
contains a converging subsequence.

Let now $(w_{n_k})_{k \in \N}$ be an arbitrary subsequence of $\seq{w}$ which converges in $W(\Omega_1, K_1)$ to some 
$\tilde w \in W(\Omega_1, \R^{M_1})$. Then, from \autoref{lem:Wsp_weakly_seq_closed_etc} and the continuity properties of $\op$ 
it follows that $\tilde w \in W(\Omega_1, K_1)$ and $(\op[w_{n_k}], v_{n_k}) \rarr (\op[\tilde w], v^\delta)$ in 
$L^{p_2}(\Omega_2, K_2) \times L^{p_2}(\Omega_2, K_2)$. Moreover, using \autoref{thm:F_and_its_summands_are_seq_weakly_closed},
\autoref{eq: w_n Minimizer} 
and the triangle inequality 
it follows that for every $w \in W(\Omega_1, K_1)$ the following estimate holds true
\begin{align*}
  \F<v^\delta>(\tilde w) 
  & = \mebr[\op(\tilde w)][v^\delta]^{p_2} + \alpha \Reg(\tilde w)
    \leq \mebr[\op(\tilde w)][v^\delta]^{p_2} + \alpha \liminf_{k \rarr \infty} \Reg(w_{n_k})
    \leq \mebr[\op(\tilde w)][v^\delta]^{p_2} + \alpha \limsup_{k \rarr \infty} \Reg(w_{n_k})
\\& 
    \leq \liminf_{k \rarr \infty}  \mebr[\op(w_{n_k})][v_{n_k}]^{p_2} + \alpha \limsup_{k \rarr \infty} \Reg(w_{n_k})    
    \leq 
    \limsup_{k \rarr \infty} \F<v_{n_k}>(w_{n_k})
    \leq \limsup_{k \rarr \infty} \F<v_{n_k}>(w)
\\& 
= \left( \limsup_{k \to \infty} \mebr[F(w)][v_{n_k}] \right)^{p_2} + \alpha \Reg(w)
    \leq \left( \limsup_{k \to \infty} \big(\mebr[F(w)][v^\delta] + \mebr[v^\delta][v_{n_k}] \big) \right)^{p_2} + \alpha \Reg(w)
\\& = \F<v^\delta>(w).
\end{align*}

This shows that $\tilde w$ is a minimizer of $\F<v^\delta>$.
Choosing $w = \tilde w$ in the previous estimate we obtain the equality
\begin{gather*}
  \mebr[\op(\tilde w)][v^\delta]^{p_2}  + \alpha \Reg(\tilde w) 
  = \mebr[\op(\tilde w)][v^\delta]^{p_2} + \alpha \liminf_{k \rarr \infty} \Reg(w_{n_k}) 
  = \mebr[\op(\tilde w)][v^\delta]^{p_2} + \alpha \limsup_{k \rarr \infty} \Reg(w_{n_k}) \,.
\end{gather*}
Due to $\mebr[\op(\tilde w)][v^\delta]^{p_2} \leq \F<v^\delta>(\tilde w) \leq m < \infty$ this gives
\begin{gather*}
  \Reg(\tilde w) = \lim_{k \rarr \infty} \Reg(w_{n_k}).
\end{gather*}
\end{proof}

Before proving the next theorem we need the following definition, cf. \cite{SchGraGroHalLen09}.
\begin{definition}
  Let $v^0 \in \Lp<2>$.
  Every element $w^* \in W(\Omega_1, K_1)$ fulfilling
  \begin{align} \label{eq: R_minimizing_solution}
  \begin{split}	
  &\op[w^*] = v^0 \\ 
  &\Reg(w^*) = \min \{ \Reg(w) \ : \ w \in W(\Omega_1, K_1), \ \op[w] = v^0 \}.
  \end{split}
  \end{align}
  is called an 
  \emph{$\Reg$-minimizing solution} of the equation $\op[w] = v^0$ or shorter just
  \emph{$\Reg$-minimizing solution}.
\end{definition}

The following theorem and its proof are inspired by \cite[Theorem 3.26]{SchGraGroHalLen09}. 

\begin{thm} \label{thm: convergence}
  Let \autoref{as:Setting} be satisfied.	  
  Let there exist an $\Reg$-minimizing solution $w^\dagger \in W(\Omega_1, K_1)$ and 
  let $\alpha: (0, \infty) \rarr (0,\infty)$ be a function satisfying
  \begin{equation}\label{eq: assumptions_on_alpha}
    \alpha(\delta) \rarr 0 \text{ and } \frac{\delta^{p_2}}{\alpha(\delta)} \rarr 0
	\text{ for } \delta \to 0.
  \end{equation} 
  Let $(\delta_n)_{n \in \N}$ be a sequence of positive real numbers converging to $0$. Moreover, let 
  $(v_n)_{n \in \N}$ be a sequence in $L^{p_2}(\Omega_2, K_2)$ with $\mebr[v^0][v_n]_{[\dKt]} \leq \delta_n$ and 
  set $\alpha_n \defeq \alpha(\delta_n)$.
  
  Then every sequence $\seq{w}$ of minimizers 
  \begin{equation*}
   w_n \in \arg \min \{ \F^{v_n}_{\alpha_n}[\dKt, \dKo](w) \ : \ w \in W(\Omega_1, K_1) \}
  \end{equation*}
  has a converging subsequence $w_{n_k} \Wto \tilde{w}$ as $k \to \infty$, and the limit $\tilde{w}$ is always an $\Reg$-minimizing solution. 
  In addition, $\Reg(w_{n_k}) \rarr \Reg(\tilde{w})$.
	
  Moreover, if $w^\dagger$ is unique it follows that $w_n \Wto w^\dagger$ and $\Reg(w_{n}) \rarr \Reg(w^\dagger)$.
\end{thm}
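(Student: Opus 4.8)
The plan is to run the standard Direct-Method convergence argument for variational regularization, adapted to the present metric setting, in the spirit of \cite[Theorem 3.26]{SchGraGroHalLen09}, using the lower semicontinuity and precompactness tools already established above. The starting point is the minimality of each $w_n$ tested against the fixed $\Reg$-minimizing solution $w^\dagger$. Since $w_n$ minimizes $\F<v_n><\alpha_n>$ and $\op[w^\dagger] = v^0$, I would estimate $\mebr[\op[w_n]][v_n]^{p_2} + \alpha_n \Reg(w_n) \le \mebr[\op[w^\dagger]][v_n]^{p_2} + \alpha_n \Reg(w^\dagger) \le \delta_n^{p_2} + \alpha_n \Reg(w^\dagger)$, where the last step uses $\mebr[\op[w^\dagger]][v_n] = \mebr[v^0][v_n] \le \delta_n$ and the finiteness of $\Reg(w^\dagger)$. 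Reading off the two summands separately and invoking \autoref{eq: assumptions_on_alpha}, I obtain first $\mebr[\op[w_n]][v_n] \to 0$ (because $\delta_n \to 0$ and $\alpha_n \to 0$), and second $\Reg(w_n) \le \delta_n^{p_2}/\alpha_n + \Reg(w^\dagger)$, so that $\limsup_n \Reg(w_n) \le \Reg(w^\dagger)$ and $(\Reg(w_n))_n$ is bounded.

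Next I would extract a convergent subsequence. The delicate point is that the precompactness in \autoref{as:Setting} is formulated only for level sets of $\F<v^0><\alpha>$ at a single fixed parameter and fixed data $v^0$, whereas the $w_n$ minimize functionals with varying data $v_n$ and varying parameter $\alpha_n$; hence the bounds must be transferred to one fixed functional. Fixing any $\alpha_0 > 0$, I would bound $\mebr[\op[w_n]][v^0]^{p_2} \le 2^{p_2-1}\big(\mebr[\op[w_n]][v_n]^{p_2} + \mebr[v_n][v^0]^{p_2}\big)$, using the triangle inequality for $\mebr$ from \autoref{pr:ExprIsOp} together with $(a+b)^{p_2} \le 2^{p_2-1}(a^{p_2}+b^{p_2})$; since both terms on the right are bounded and $\Reg(w_n)$ is bounded, $\F<v^0><\alpha_0>(w_n) \le t$ for some $t$ and all $n$. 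Thus $(w_n)$ lies in $\text{level}_t(\F<v^0><\alpha_0>)$, which is sequentially precompact, yielding a subsequence $w_{n_k} \Wto \tilde w$ with $\tilde w \in W(\Omega_1, K_1)$ by \autoref{lem:Wsp_weakly_seq_closed_etc}.

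I would then identify $\tilde w$ as an $\Reg$-minimizing solution. For the constraint, sequential continuity of $\op$ gives $\op[w_{n_k}] \to \op[\tilde w]$ in $\Lp<2>$, while $v_{n_k} \to v^0$ in $\Lp<2>$ (from $\dRMt[K_2] \le d_2$ and $\mebr[v_{n_k}][v^0] \le \delta_{n_k} \to 0$); lower semicontinuity of $\mebr$ (\autoref{thm:F_and_its_summands_are_seq_weakly_closed}) then yields $\mebr[\op[\tilde w]][v^0] \le \liminf_k \mebr[\op[w_{n_k}]][v_{n_k}] = 0$, and since $\mebr$ is a genuine metric this forces $\op[\tilde w] = v^0$. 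For minimality of $\Reg$, lower semicontinuity of $\Reg$ combined with the limsup bound gives $\Reg(\tilde w) \le \liminf_k \Reg(w_{n_k}) \le \limsup_k \Reg(w_{n_k}) \le \Reg(w^\dagger)$; as $\tilde w$ is feasible it also satisfies $\Reg(\tilde w) \ge \Reg(w^\dagger)$ by definition of $w^\dagger$, whence equality, so $\tilde w$ is an $\Reg$-minimizing solution. The same squeezed chain, now with $\Reg(w^\dagger) = \Reg(\tilde w)$ at both ends, collapses to $\lim_k \Reg(w_{n_k}) = \Reg(\tilde w)$.

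Finally, for the uniqueness statement I would use the subsequence principle by contradiction: if $(w_n)$ does not converge to $w^\dagger$, there is a neighborhood $U$ of $w^\dagger$ and a subsequence avoiding $U$; but that subsequence is again a sequence of minimizers for data $v_{n_j}$ and parameters $\alpha_{n_j}$ with $\delta_{n_j} \to 0$, so the already-proven first part furnishes a further subsequence converging to an $\Reg$-minimizing solution, necessarily $w^\dagger$ by uniqueness — contradicting that all its members lie outside $U$. The statement $\Reg(w_n) \to \Reg(w^\dagger)$ then follows by the same subsequence principle applied to the real sequence $(\Reg(w_n))_n$. I expect the precompactness transfer in the second step to be the only place demanding genuine care, precisely because \autoref{as:Setting} fixes both the parameter and the data in the level set while the minimizers vary both; the remaining steps are routine applications of \autoref{thm:F_and_its_summands_are_seq_weakly_closed} and of the metric structure of $\mebr$ established in \autoref{pr:ExprIsOp}.
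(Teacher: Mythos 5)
Your proof is correct and follows essentially the same route as the paper's: testing minimality of $w_n$ against $w^\dagger$, deducing $\mebr[\mathrm{F}(w_n)][v_n]\to 0$ and $\limsup_n \Reg(w_n) \le \Reg(w^\dagger)$, transferring these bounds to a level set of the fixed functional $\mathcal{F}^{v^0}$ to invoke the precompactness assumption, squeezing with lower semicontinuity, and finishing the uniqueness claim with the subsequence principle. The only cosmetic differences are that the paper uses $\alpha_{\mathrm{max}} = \max_n \alpha_n$ where you fix an arbitrary $\alpha_0$ (both legitimate, since the level-set assumption holds for every $\alpha>0$), and that it identifies $\mathrm{F}(\tilde w) = v^0$ via norm convergence in $L^{p_2}(\Omega_2,\R^{M_2})$ and uniqueness of limits rather than via lower semicontinuity of $\mebr_{[\dKt]}$.
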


\begin{proof}
  We write shortly $\mebr$ for $\mebr_{[\dKt]}$.
  Taking into account that $w_n \in \argmin \{ \F^{v_n}_{\alpha_n}[\dKt, \dKo](w) \ : \ w \in W(\Omega_1, K_1) \}$ it follows that
  \begin{gather*}
    \mebr[\op[w_n]][v_n]^{p_2} \leq \F<v_n><\alpha_n>(w_n) \leq \F<v_n><\alpha_n>(w^\dagger) = 
    \mebr[v^0][v_n]^{p_2} + \alpha_n \Reg(w^\dagger) \leq \delta_n^{p_2} + \alpha_n \Reg(w^\dagger) \rarr 0,
  \end{gather*}
  yielding $\mebr[\op[w_n]][v_n] \rarr 0$ as $n \rarr \infty$.
  The triangle inequality gives $\mebr[\op[w_n]][v^0] \leq \mebr[\op[w_n]][v_n] + \mebr[v_n][v^0] \rarr 0$ as 
  $n \rarr \infty$ and
  \autoref{re:tricks} ensures 
  $\norm{F(w_n) - v^0}_{\Lp<2>[][\R^{M_2}]} \leq \mebr[\op[w_n]][v^0] \rarr 0$ as 
  $n \rarr \infty$, so that  
  \begin{gather}\label{eq:Convergence_of_operator}
    \op[w_n] \rarr v^0 \text{ in } L^{p_2}(\Omega_2, \R^{M_2}).
  \end{gather} 
  Since
  \begin{gather*}
    \Reg(w_n)  \leq \frac{1}{\alpha_n} \F<v_n><\alpha_n>(w_n) \leq \frac{1}{\alpha_n} \F<v_n><\alpha_n>(w^\dagger) 
    = \frac{1}{\alpha_n}\big( \mebr[v^0][v_n]^{p_2} + \alpha_n \Reg(w^\dagger)  \big) \leq \frac{\delta_n^{p_2}}{\alpha_n} + \Reg(w^\dagger),
  \end{gather*}
  we also get 
  \begin{gather}\label{eq:Regularizer_values_bounded}
    \limsup_{n \rarr \infty} \Reg(w_n) \leq \Reg(w^\dagger).
  \end{gather}
  Set $\alpha_{\mathrm{max}} \defeq \max\{\alpha_n : n \in \N\}$. 
  Since 
  \begin{gather*}
    \limsup_{ n \rarr \infty } \F<v^0><\alpha_n>(w_{n}) \leq 
    \limsup_{n \rarr \infty } \big( \mebr[\op[w_{n}]][v^0]^{p_2} + \alpha_{\mathrm{max}} \Reg(w_{n})  \big) \leq \alpha_{\mathrm{max}} \Reg(w^\dagger)
  \end{gather*}
 the sequence $\F<v^0><\alpha_{\mathrm{max}}>(w_{n})$ is bounded. From \autoref{as:Setting}, item \eqref{itm: A} it follows that there exists 
 a converging subsequence $(w_{n_k})_{k \in \N}$ of $\seq{w}$. The limit of $(w_{n_k})_{k \in \N}$ is denoted by 
 $\tilde{w}$. Then, from \autoref{lem:Wsp_weakly_seq_closed_etc} it follows that $\tilde{w} \in W(\Omega_1, K_1)$.
 Since the operator $\op$ is sequentially continuous it follows that $\op[w_{n_k}] \rarr \op[\tilde{w}]$ in $L^{p_2}(\Omega_2, K_2)$.	
 This shows that actually $\op[\tilde{w}] = v^0$ since \autoref{eq:Convergence_of_operator} is valid.
 Then, from \autoref{thm:F_and_its_summands_are_seq_weakly_closed} it follows that the functional 
 $\Reg: W(\Omega_1, K_1) \rarr [0, \pinfty]$ is sequentially lower semi-continuous,
	so that $\Reg(\tilde{w}) \leq \liminf_{k \rarr \infty} \Reg(w_{n_k})$.
	Combining this with \autoref{eq:Regularizer_values_bounded} we also obtain 
	$$ \Reg(\tilde{w}) \leq \liminf_{k \rarr \infty} \Reg(w_{n_k}) \leq \limsup_{k \rarr \infty} \Reg(w_{n_k}) \leq \Reg(w^\dagger) \leq \Reg(\tilde{w}),$$
	using the definition of $w^\dagger$. 
	This, together with the fact that $\op[\tilde{w}] = v^0$ we see that $\tilde{w}$ is an $\Reg$-minimizing solution and that 
	$\lim_{k \rarr \infty} \Reg(w_{n_k})= \Reg(\tilde{w})$. 
	
	Now assume that the solution fulfilling \autoref{eq: R_minimizing_solution} is unique; we call it $w^\dagger$.  
	In order to prove that $w_n \Wto w^\dagger$ it is sufficient to show that any subsequence
	has a further subsequence converging to $w^\dagger$, cf.
	\cite[Lemma 8.2]{SchGraGroHalLen09}.
	Hence, denote by $(w_{n_k})_{k \in \N}$ an arbitrary subsequence of $(w_n)$, the sequence of minimizers.
	Like before we can show that $\F<v^0><\alpha>(w_{n_k})$ is bounded and we can extract a converging subsequence 
	$(w_{n_{k_l}})_{l \in \N}$. The limit of this subsequence is $w^\dagger$ since it is the unique solution fulfilling 
	\autoref{eq: R_minimizing_solution}, showing that $w_n \Wto w^\dagger$. Moreover, $w^\dagger \in W(\Omega_1, K_1)$. 
	Following the arguments above we obtain as well $\lim_{n \rarr \infty} \Reg(w_{n})= \Reg(w^\dagger).$ 
\end{proof}
\begin{remark}
\autoref{thm:Stability} guarantees that the minimizers of $\F<v_n><\alpha>[\dKt, \dKo]$ depend continuously on $v^\delta$ while 
\autoref{thm: convergence} ensures that they converge to a solution of $\op(w) = v^0$, $v^0$ the exact data, while $\alpha$ tends to zero. 
\end{remark}

\section{Discussion of the Results and Conjectures}
In this section we summarize some open problems related to double integral 
expressions
of functions with values on manifolds.

\subsection{Relation to single integral representations} 
In the following we show for one particular case of functions that have values in a manifold, that the double 
integral formulation $\Reg_{[\dKo]}$, defined in \autoref{eq:d3}, approximates a single energy integral. The basic 
ingredient for this derivation is the exponential map related to the metric $d_1$ on the manifold.
In the following we investigate manifold--valued functions $w \in W^{1,2}(\Omega, \mathcal{M})$, where we 
consider $\mathcal{M} \subseteq \R^{M \times 1}$ to be a connected, complete Riemannian manifold. 
In this case some of the regularization functionals $\Reg_{[\dKo]}$, defined in \autoref{eq:d3}, can be 
considered as approximations of \emph{single} integrals. In particular we aim to generalize \autoref{eq:double_integral} 
in the case $p=2$. 

We have that 
\begin{equation*}
 \nabla w = \begin{bmatrix} 
              \frac{\partial w_1}{\partial x_1} & \cdots & \frac{\partial w_1}{\partial x_N} \\
              \vdots & \ddots & \vdots\\
              \frac{\partial w_M}{\partial x_1} & \cdots & \frac{\partial w_M}{\partial x_N}
\end{bmatrix} \in \R^{M \times N}.
\end{equation*}
In the following we will write $\Reg_{[\dKo],\ve}$ instead of $\tfrac12 \Reg_{\dKo}$ to stress the dependence on $\ve$ in contrast to above;
the factor $\frac{1}{2}$ was added due to reasons of calculation.
Moreover, let 
$\hat{\rho} : \R_+ \to \R_+$ be in $C_c^\infty(\R_+, \R_+)$ and satisfy
\begin{equation*}
\abs{\mathbb{S}^{N-1}}\int_0^\infty \hat{t}^{N-1} \hat{\rho}\left(\hat{t}\right)d \hat{t} = 1\;.
\end{equation*}
Then for every $\ve > 0$
\begin{equation*}
x \in \R^n \mapsto \rho_\ve(x)\defeq \frac{1}{\ve^N} \hat{\rho}\left(\frac{\normN[x]}{\ve}\right)
\end{equation*}
is a mollifier, cf. \autoref{ex:mol}. \\
$\Reg_{[\dKo],\ve}$ 
(with $p_1=2$) then reads as follows: 
\begin{equation}
\label{eq:di_II}
 \Reg_{[\dKo],\ve}(w) 
 \defeq 
 \frac{1}{2}\int\limits_{\Omega\times \Omega} \frac{d_1^2(w(x),w(y))}{\normN[x-y]^2} \rho_\ve(x-y) \dxy\,.
\end{equation}
Substitution with spherical coordinates $y = x - t \theta \in \R^{N \times 1}$ with  
$\theta \in \mathbb{S}^{N-1} \subseteq \R^{N \times 1}$, $t \geq 0$ gives
\begin{equation}
\label{eq:reg2}
\begin{aligned}
\lim_{\ve \searrow 0}
\Reg_{[\dKo],\ve}(w) &= 
\lim_{\ve \searrow 0}
\frac{1}{\ve^N}
 \int\limits_{\Omega} \int\limits_{\mathbb{S}^{N-1}} 
 \int\limits_0^\infty \frac{1}{2} d_1^2(w(x),w(x-t \theta)) t^{N-3} \hat{\rho}\left(\frac{t}{\ve}\right) \mathrm{d}t \,\mathrm{d}\theta \dx\;.
\end{aligned}
\end{equation}
Now, using that for $m_1 \in \mathcal{M}$ fixed and $m_2 \in \mathcal{M}$ such that $m_1$ and $m_2$ are joined by a unique minimizing geodesic (see for instance \cite{FigVil11} where the concept of exponential mappings is explained) 
\begin{equation}\label{eq:partial_II}
 \frac{1}{2} \partial_2 d_1^2(m_1,m_2) = - (\exp_{m_2})^{-1}(m_1) \in \R^{M \times 1},
\end{equation}
where $\partial_2$ denotes the derivative of $d_1^2$ with respect to the second component. 
By application of the chain rule we get
\begin{equation*}
 \begin{aligned}
  - \frac{1}{2} \nabla_y d_1^2(w(x),w(y)) &= 
    \underbrace{(\nabla w(y))^T}_{\in \R^{N \times M}} \underbrace{(\exp_{w(y)})^{-1}(w(x))}_{\in \R^{M \times 1}}\in \R^{N \times 1}\;,
 \end{aligned}
\end{equation*}
where $w(x)$ and $w(y)$ are joined by a unique minimizing geodesic. This assumption seems reasonable due to the fact 
that we consider the case $\ve \searrow 0$. 
Let $\cdot$ denote the scalar multiplication of two vectors in $\R^{N \times 1}$, then the last equality shows that
\begin{equation*}
 \begin{aligned}
\frac{1}{2} d_1^2(w(x),w(x-t \theta)) 
&= - \frac{1}{2}  \left[  d_1^2\big(w(x),w( (x-t\theta)  + t \theta )\big) - d_1^2\big(w(x),w(x-t \theta)\big)  \right]  \\ 
&\approx \left( \left(\nabla w(x-t \theta)\right)^T (\exp_{w(x-t \theta)})^{-1}(w(x)) \right)  \cdot t\theta\;.   
\end{aligned}
\end{equation*}
Thus from \autoref{eq:reg2} it follows that 
\begin{equation}
\label{eq:reg3}
\begin{aligned}
~ & \lim_{\ve \searrow 0} 
\Reg_{[\dKo],\ve}(w) \\
\approx & 
\lim_{\ve \searrow 0}
 \frac{1}{\ve^N}
 \int\limits_{\Omega} \int\limits_{\mathbb{S}^{N-1}} 
 \int\limits_0^\infty \left( \left( \nabla w(x-t \theta)\right)^T (\exp_{w(x-t \theta)})^{-1}(w(x)) 
                              \right) \cdot  
                      \theta \left(t^{N-2} \hat{\rho}\left(\frac{t}{\ve}\right)\right) \mathrm{d}t \,\mathrm{d}\theta \dx\;.
\end{aligned}
\end{equation}
Now we will use a Taylor series of power 0 for $ t\mapsto \nabla w(x-t \theta)$ and of power 1 for $t \mapsto (\exp_{w(x-t \theta)})^{-1}(w(x))$ to rewrite \autoref{eq:reg3}.
We write 
\begin{equation}
 F(w;x,t,\theta) \defeq (\exp_{w(x-t \theta)})^{-1}(w(x)) \in \R^{M \times 1}
\end{equation}
and define 
\begin{equation}
 \dot{F}(w;x,\theta) \defeq \lim_{t \searrow 0} \frac{1}{t} \left((\exp_{w(x-t \theta)})^{-1}(w(x)) - 
 \underbrace{(\exp_{w(x)})^{-1}(w(x))}_{=0} 
 \right) \in \R^{M \times 1}.
\end{equation}
Note that because $(\exp_{w(x)})^{-1}(w(x))$ vanishes, $\dot{F}(w(x);\theta)$ is the leading order term of the expansion of 
$(\exp_{w(x-t \theta)})^{-1}(w(x))$ with respect to $t$.
Moreover, in the case that $\nabla w(x) \neq 0$ this is the leading order approximation of $\nabla w(x-t \theta)$. In summary we are calculating the leading order term of the expansion 
with respect to $t$.

Then from \autoref{eq:reg3} it follows that
\begin{equation}
\label{eq:reg3a}
\lim_{\ve \searrow 0}
\Reg_{[\dKo],\ve}(w) 
\approx
\lim_{\ve \searrow 0}
\underbrace{\frac{1}{\ve^N} \int\limits_0^\infty t^{N-1} \hat{\rho}\left(\frac{t}{\ve}\right) \mathrm{d}t}_{= \abs{\mathbb{S}^{N-1}}^{-1}}
 \int\limits_{\Omega} \int\limits_{\mathbb{S}^{N-1}} \left((\nabla w(x))^T \dot{F}(w;x,\theta) \right) \cdot \theta \;
  \mathrm{d}\theta \dx\;.
\end{equation}
The previous calculations show that the double integral simplifies to a double integral where the inner integration domain 
has one dimension less than the original integral. Under certain assumption the integration domain can be further simplified:

\begin{example}
 If $d_1(x,y)=\normM[x-y]$, $p_1=2$, then 
 \begin{equation*}
  \dot{F}(w;x,\theta) = \lim_{t \searrow 0} \frac{1}{t} \left(w(x) - w(x-t\theta)\right) = \nabla w(x)\theta \in \R^{M \times 1}.
 \end{equation*}
 Thus from \eqref{eq:reg3a} it follows that 
\begin{equation}
\label{eq:reg3b}
\lim_{\ve \searrow 0}
\Reg_{[\dKo],\ve}(w) 
\approx \int\limits_{\Omega} \underbrace{(\nabla w(x))^T \nabla w(x)}_{\norm{\nabla w(x)}^2_{\R^M}} \dx\;.
\end{equation}
This is exactly the identity derived in \citeauthor{BouBreMir01} \cite{BouBreMir01}.
\end{example}
From these considerations we can view $\lim_{\ve \searrow 0} \Reg_{[\dKo],\ve}$ as functionals, which generalize 
Sobolev and $\mathrm{BV}$ semi-norms to functions with values on manifolds.

\subsection{A conjecture on Sobolev semi-norms}
\label{ss:conjecture}
Starting point for this conjecture is \autoref{eq:d3}. We will write $\Omega,M$ and $p$ instead of $\Omega_1, M_1$ and $p_1$.
\begin{itemize}
 \item In the case $l=0$, $k=N$, $0<s<1$ and $\dKo(w(x), w(y))= \normM[w(x)-w(y)]$ 
       the functional $\Reg_{[\dKo]}$ from 
       \autoref{eq:d3} simplifies to the $p$-th 
       power of the Sobolev semi-norm and reads
       \begin{equation}\label{eq:d3a} 
        \int\limits_{\Omega\times \Omega} \frac{\normM[w(x)-w(y)]^{p}}{\normN[x-y]^{N+p s}} \dxy.
       \end{equation}
       For a recent survey on fractional Sobolev Spaces see \cite{DiNPalVal12}.
 \item On the other hand, when we choose $k=0$, $l=1$ and $\dKo(w(x), w(y))= \normM[w(x)-w(y)]$, then 
       $\Reg_{[\dKo]}$ from \autoref{eq:d3} reads
       (note $\rho=\rho_\ve$ by simplification of notation):
       \begin{equation}\label{eq:d3b} 
        \int\limits_{\Omega\times \Omega} \frac{\normM[w(x)-w(y)]^{p}}{\normN[x-y]^{p s}} 
        \rho_\ve(x-y) 
        \dxy.
    \end{equation}
 \item Therefore, in analogy to what we know for $s=1$ from \cite{BouBreMir01}, we conjecture that
        \begin{equation}\label{eq:d3c}
        \lim_{\ve \to 0} \int\limits_{\Omega\times \Omega} \frac{\normM[w(x)-w(y)]^{p}}{\normN[x-y]^{p s}} 
        \rho_\ve(x-y) 
          \dxy = C \int\limits_{\Omega\times \Omega} \frac{\normM[w(x)-w(y)]^{p}}{\normN[x-y]^{N+p s}}\dxy.
    \end{equation}
    The form \autoref{eq:d3c} is numerically preferable to the standard Sobolev semi-norm \autoref{eq:d3a}, because 
    $\rho=\rho_\ve$ and thus the integral kernel has compact support.
\end{itemize}

\section{Numerical Examples} \label{sec:Numerical_results}

In this section we present some numerical examples for denoising and inpainting of functions with values on the circle 
$\sphere$. Functions with values on a sphere have already been investigated very diligently (see for instance 
\cite{BouBreMir00b} out of series of publications of these authors). 
Therefore we review some of their results first. 

\subsection{$\sphere$-Valued Data}
\label{ss: spheredata}
Let $\emptyset \neq \Omega \subset \R$ or $\R^2$ be a bounded and simply connected open set with Lipschitz boundary. In \cite{BouBreMir00b}
the question was considered when $w \in \Wsp[\sphere]$ can be represented by some function $u \in \Wsp[\R]$ 
satisfying 
\begin{equation}\label{eq:id_sphere_w}
 \Phi(u) \defeq \e^{\i u} = w.
\end{equation}
That is, the function $u$ is a \emph{lifting} of $w$.

\begin{lemma}[\cite{BouBreMir00b}] \label{lem: lifting}
\begin{itemize}
 \item Let $\Omega \subset \R$, $0 < s < \infty$, $1 < p < \infty$. Then for all $w \in \Wsp[\sphere]$ there exists 
       $u \in \Wsp[\R]$ satisfying \autoref{eq:id_sphere_w}.
 \item Let $\Omega \subset \R^N$, $N \geq 2$, $0 < s < 1$, $1 < p < \infty$. Moreover, let 
       $sp < 1$ or $sp \geq N$, then for all $w \in \Wsp[\sphere]$ there exists 
       $u \in \Wsp[\R]$ satisfying \autoref{eq:id_sphere_w}. 
       
       If  $sp \in [1,N)$, then there exist functions $w \in \Wsp[\sphere]$ such that \autoref{eq:id_sphere_w} does not hold with any function
       $u \in \Wsp[\R]$. 
\end{itemize}

\end{lemma}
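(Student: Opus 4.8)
The plan is to treat the two positive (existence) statements and the single negative (non-existence) statement separately, organising everything by the size of $sp$ relative to the thresholds $1$ and $N$. The guiding heuristic concerns when the fractional Sobolev class can detect the winding of an $\sphere$-valued map: when $sp<1$ its elements may jump freely, when $sp\geq N$ they are $\mathrm{VMO}$ (continuous if $sp>N$) so local phases exist, and when $N=1$ the domain offers no room for a winding obstruction. Consequently a lifting satisfying \autoref{eq:id_sphere_w} always exists except possibly when $N\geq2$ and $1\leq sp<N$, which is exactly the regime in which a counterexample must be built.

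For the three existence statements I would exploit three different mechanisms. When $N=1$ there is no topological room for an obstruction: for smooth $w:\Omega\to\sphere$ the identity $w'=\i u' w$ forces $u'=-\i\,\overline{w}\,w'$, which is real because differentiating $w\overline{w}=1$ gives $w'\overline{w}+w\,\overline{w'}=0$; hence $u(x)\defeq u_0+\int_{x_0}^x\bigl(-\i\,\overline{w(t)}\,w'(t)\bigr)\,\mathrm{d}t$ is a genuine real lifting, and I would bound $\abs{u}_{\Wsp[\R]}$ by $\abs{w}_{\Wsp[\sphere]}$ (via the Gagliardo difference quotients for $0<s<1$, classically for integer $s$, and by interpolation for the remaining exponents) and pass to general $w$ by density of smooth $\sphere$-valued maps. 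When $sp<1$ the topology of $\sphere$ plays no role: I would take any measurable lifting $\varphi$ of $w$ and prove $\varphi\in\Wsp[\R]$ by first estimating the circle-distance seminorm
\[
\int_{\Omega\times\Omega}\frac{\mathrm{dist}_{\sphere}(\varphi(x),\varphi(y))^p}{\normN[x-y]^{N+sp}}\dxy\leq C\,\abs{w}_{\Wsp[\sphere]}^p,
\]
which follows from $\mathrm{dist}_{\sphere}(\varphi(x),\varphi(y))\asymp\normM[w(x)-w(y)]$, and then upgrading the circle distance to the honest real difference $\abs{\varphi(x)-\varphi(y)}$ — an upgrade that is possible precisely because $sp<1$ makes the kernel integrable enough to absorb the $2\pi$ jumps of $\varphi$. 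Finally, when $N\geq2$ and $sp\geq N$ we have $\Wsp[\sphere]\hookrightarrow\mathrm{VMO}$, so on small balls $w$ has small mean oscillation, admits a continuous local phase, and these local phases glue to a global lifting because the monodromy around every small loop vanishes.

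For the non-existence statement ($N\geq2$, $1\leq sp<N$) the plan is to exhibit an explicit $w$ carrying a topological obstruction. The model is the vortex $w(x)=(x_1+\i x_2)/\sqrt{x_1^2+x_2^2}$, singular on the codimension-two set $\{x_1=x_2=0\}$; a direct computation shows $w\in\Wsp[\sphere]$ exactly when $sp<2$, whereas any $u$ with $\e^{\i u}=w$ must coincide with the multivalued angle $\arg(x_1+\i x_2)$ and hence jump by $2\pi$ across a hypersurface emanating from the singular set, which is incompatible with $u\in\Wsp[\R]$. This settles $1\leq sp<2$; to reach the full range up to $N$ (relevant only for $N\geq3$, where a single vortex no longer lies in $\Wsp[\sphere]$) I would superpose vortices of alternating sign at dyadically shrinking scales, tuning the scales and amplitudes so that the superposition still belongs to $\Wsp[\sphere]$ while the $\Wsp[\R]$-seminorm of every admissible phase diverges.

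The main obstacle is twofold. The delicate analytic point in the existence part is the seminorm upgrade for $sp<1$: passing from the circle distance to the genuine real difference of the lifting without losing the $\Wsp[\R]$ bound, which genuinely uses $sp<1$. The harder point overall is the counterexample in the range $2\leq sp<N$, where one must engineer a multi-scale singular map that is simultaneously regular enough to lie in $\Wsp[\sphere]$ and topologically non-trivial enough to forbid any $\Wsp[\R]$-lifting; balancing these two competing requirements across all scales is where the real work lies.
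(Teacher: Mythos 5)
The paper does not prove this lemma at all: it is quoted verbatim from Bourgain, Br\'ezis and Mironescu's lifting paper \cite{BouBreMir00b}, so your sketch can only be measured against the arguments there. Your decomposition by the size of $sp$ is exactly the right one, and two of your three existence mechanisms (a measurable lifting upgraded to $\Wsp[\R]$ when $sp<1$; local continuous phases glued via the $\mathrm{VMO}$ embedding when $sp\geq N$) are the ones used in the reference. Two steps, however, contain genuine gaps. First, the one--dimensional case with $0<s<1$: integrating $u'=-\i\,\overline{w}\,w'$ for smooth $w$ and ``passing to general $w$ by density'' requires a uniform bound on $\abs{u_n}_{\Wsp[\R]}$ along an approximating sequence, and the phase estimate $\abs{u}_{\Wsp[\R]}\lesssim\abs{w}_{\Wsp[\sphere]}$ is known to fail in linear form in the fractional range (the correct control is genuinely nonlinear); this is precisely the hard part, not a routine consequence of Gagliardo seminorms and interpolation. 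The reference instead handles $N=1$ by the same two mechanisms as above ($sp<1$ versus $sp\geq 1=N$), where no monodromy obstruction can arise, reserving the integration argument for $s\geq 1$. Relatedly, for $sp<1$ the ``upgrade'' from the circle distance to $\abs{\varphi(x)-\varphi(y)}$ cannot be justified by saying the kernel is ``integrable enough'': $\int_{\Omega\times\Omega}\normN[x-y]^{-N-sp}\dxy$ diverges near the diagonal for every $sp>0$, so one must genuinely control the set of near--diagonal pairs $(x,y)$ at which the chosen branch wraps around; making that precise (e.g.\ by an appropriate choice of branch cut) is where the analytic work of this case lies.

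Second, the non--existence part. The vortex $w(x)=(x_1+\i x_2)/\abs{(x_1,x_2)}$ settles only $1\leq sp<2$, since it lies in $\Wsp[\sphere]$ exactly when $sp<2$; your multi--scale superposition of dipoles for $2\leq sp<N$ is left entirely speculative, and it is not the route taken in \cite{BouBreMir00b}. Their counterexample is non--topological and covers the whole range $1\leq sp<N$ at once: using $sp<N$ one builds an \emph{unbounded} real phase $\varphi$ such that $w\defeq\e^{\i\varphi}\in\Wsp[\sphere]$ (the point being that $\norm{\e^{\i a}-\e^{\i b}}\leq\min(\abs{a-b},2)$, so the exponential truncates large oscillations of $\varphi$) while $\varphi\notin\Wsp[\R]$; any lifting $u$ then satisfies $u-\varphi\in 2\pi\Z$ a.e., and the rigidity lemma that $2\pi\Z$--valued functions in a Sobolev class with $\sigma q\geq 1$ are constant on a connected domain forces $u=\varphi+\mathrm{const}\notin\Wsp[\R]$. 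That same rigidity lemma is also the missing justification in your vortex argument for why a $2\pi$ jump across a hypersurface is incompatible with membership in $\Wsp[\R]$ when $sp\geq 1$. As it stands, the range $2\leq sp<N$ of the negative statement is not established by your proposal.
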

For 
\begin{equation}
 \label{eq:arccos}
 \dS(a,b) \defeq \arccos(a^T b) \, , \quad a, b \in \sphere,
\end{equation}
we consider the functional (note that by simplification of notation below $\rho=\rho_\ve$ denotes a mollifier)
\begin{equation}
\label{eq:considered}
\Reg_{[\dS]}(w) = \int\limits_{\Omega\times \Omega} \frac{\dS^p(w(x), w(y))}{\normN[x-y]^{k+ps}} \rho^l(x-y) \dxy,
\end{equation}
on $w \in \Wsp[\sphere]$, 
in accordance to \autoref{eq:d3}.

Writing $w = \Phi(u)$ as in \autoref{eq:id_sphere_w}
we get the 
lifted functional
\begin{equation}
\label{eq:sobolev_alternative}
\Reg_{[\dS]}^{\Phi}(u) \defeq
 \int\limits_{\Omega\times \Omega} \frac{\dS^p(\Phi(u)(x), \Phi(u)(y))}{\normN[x-y]^{k+ps}} \rho^l(x-y) \dxy,
\end{equation}
over the space $\Wsp[\R]$.
\begin{remark}
\begin{itemize}
 \item We note that in the case $k=0$, $s=1$ and $l=1$ these integrals correspond with the ones considered 
 in \citeauthor{BouBreMir01} \cite{BouBreMir01} for functions with values on $\sphere$.
 \item 
 If we choose $k=N$, $s=1$ and $l=0$, then this corresponds with Sobolev semi-norms on manifolds. 
 \item Let $\ve > 0$ fixed (that is, we consider neither a standard Sobolev regularization nor the limiting 
       case $\ve \to 0$ as in \cite{BouBreMir01}). In this case we have proven coercivity of the functional 
       $\F: \Wsp[\sphere] \rarr [0,\infty), \ 0<s<1,$ only with the following regularization functional, 
       cf. \autoref{ex:coercive} and \autoref{ex:in}:   
       $$\int\limits_{\Omega\times \Omega} \frac{\dS^p(w(x),w(y))}{\normN[x-y]^{N + p s}} \rho_\ve(x-y) \dxy.$$
 \end{itemize}
\end{remark}

We summarize a few results: The first lemma follows from elementary calculations:
\begin{lemma}
\label{le:1}
 $\dS$ and $\d_{\R^2}\big|_{\sphere \times \sphere}$ are equivalent.
\end{lemma}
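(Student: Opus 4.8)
The plan is to reduce both distances to the single scalar $\theta \defeq \arccos(a^{T} b) \in [0,\pi]$, the angle enclosed by $a$ and $b$, and then to compare them by an elementary trigonometric inequality. By definition $\dS(a,b) = \theta$. For the chordal distance, a direct computation exploiting $\norm{a}_{\R^2} = \norm{b}_{\R^2} = 1$ gives
\[
\norm{a-b}_{\R^2}^{2} = \norm{a}_{\R^2}^{2} - 2\,a^{T}b + \norm{b}_{\R^2}^{2} = 2 - 2\cos\theta = 4\sin^{2}\!\tfrac{\theta}{2},
\]
and since $\tfrac{\theta}{2} \in [0,\tfrac{\pi}{2}]$ the sine is non-negative, so $\norm{a-b}_{\R^2} = 2\sin\tfrac{\theta}{2}$ without any absolute value.

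Next I would invoke the classical (Jordan) inequality $\tfrac{2}{\pi}\,x \le \sin x \le x$, valid for $x \in [0,\tfrac{\pi}{2}]$, applied to $x = \tfrac{\theta}{2}$. Multiplying through by $2$ yields
\[
\tfrac{2}{\pi}\,\theta \;\le\; 2\sin\tfrac{\theta}{2} \;\le\; \theta .
\]
Substituting back $\theta = \dS(a,b)$ and $2\sin\tfrac{\theta}{2} = \norm{a-b}_{\R^2}$ turns this into
\[
\tfrac{2}{\pi}\,\dS(a,b) \;\le\; \norm{a-b}_{\R^2} \;\le\; \dS(a,b) \qquad \text{for all } a,b \in \sphere ,
\]
which are exactly the two-sided bounds expressing that $\dS$ and $\d_{\R^2}\big|_{\sphere \times \sphere}$ are equivalent metrics; the right-hand inequality moreover reconfirms the hypothesis $\d_{\R^2}\big|_{\sphere\times\sphere} \le \dS$ demanded in \autoref{ass:1}.

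I do not expect any genuine obstacle here—this is precisely the ``elementary calculations'' flagged in the statement. The only points needing a moment's care are confirming that the relevant range really is $\theta \in [0,\pi]$ (so that $\tfrac{\theta}{2}$ stays in $[0,\tfrac{\pi}{2}]$, where both the sine inequality and the sign-free identity $\sqrt{4\sin^{2}(\theta/2)} = 2\sin(\theta/2)$ are valid), and observing that at the degenerate case $\theta = 0$, i.e. $a = b$, both sides vanish and the inequalities hold trivially.
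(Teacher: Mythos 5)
Your proof is correct and is precisely the ``elementary calculation'' the paper alludes to (the paper gives no explicit proof): the identity $\normN[a-b]|_{\R^2}=2\sin(\theta/2)$ with $\theta=\arccos(a^Tb)\in[0,\pi]$ combined with Jordan's inequality yields the two-sided bound $\tfrac{2}{\pi}\,\dS(a,b)\le \norm{a-b}_{\R^2}\le \dS(a,b)$, which is exactly the claimed equivalence. No gaps.
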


\begin{lemma} \label{lem:2}
	Let $u \in \Wsp[\R]$. Then $\Phi(u) \in \Wsp[\sphere]$.
\end{lemma}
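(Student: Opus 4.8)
The plan is to exploit that the lifting map $\Phi$, introduced in \autoref{eq:id_sphere_w} and identified with $t \mapsto (\cos t, \sin t) \in \R^2$, is globally Lipschitz. First I would record the elementary identity
\[
\norm{\Phi(a) - \Phi(b)}_{\R^2} = 2\,\Bigl|\sin\tfrac{a-b}{2}\Bigr| \leq \abs{a-b}, \qquad a,b \in \R,
\]
which follows from $\norm{\Phi(a)-\Phi(b)}_{\R^2}^2 = 2 - 2\cos(a-b) = 4\sin^2\tfrac{a-b}{2}$ together with $\abs{\sin\theta}\leq\abs{\theta}$. Thus $\Phi$ is $1$-Lipschitz from $(\R,\abs{\cdot})$ into $(\sphere, \norm{\cdot}_{\R^2})$.

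Next I would verify the defining requirements for $w \defeq \Phi(u)$ to lie in $\Wsp[\sphere]$. Measurability of $w$ is clear, since it is the composition of the measurable function $u$ with the continuous map $\Phi$, and by construction $w(x) \in \sphere$ for a.e. $x \in \Omega$. Membership in $L^{p}$ is immediate as well: because $\norm{w(x)}_{\R^2} = 1$ for a.e. $x$ and $\Omega$ is bounded, one has $\norm{w}_{L^{p}(\Omega,\R^2)}^{p} = \abs{\Omega} < \infty$.

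The essential step is the control of the Gagliardo seminorm \autoref{eq:sobolev_semi_norm} in the case $0 < s < 1$. Applying the Lipschitz bound pointwise gives, for a.e. $(x,y) \in \Omega\times\Omega$,
\[
\frac{\norm{w(x) - w(y)}_{\R^2}^{p}}{\normN[x-y]^{N+ps}} \leq \frac{\abs{u(x) - u(y)}^{p}}{\normN[x-y]^{N+ps}},
\]
and integrating over $\Omega\times\Omega$ yields $\abs{w}_{\Wsp[\sphere]} \leq \abs{u}_{\Wsp[\R]} < \infty$. Combined with the $L^p$ bound this proves $w \in \Wsp[\sphere]$. For the borderline case $s = 1$ I would instead invoke the chain rule for compositions of a Sobolev function with a Lipschitz (here even smooth) map: $w = \Phi(u) \in W^{1,p}$ with weak Jacobian whose rows are $-\sin(u)\,\nabla u^{\mathrm{T}}$ and $\cos(u)\,\nabla u^{\mathrm{T}}$, so that pointwise $\norm{\nabla w(x)}_{\R^{2\times N}} = \normN[\nabla u(x)]$ and hence $\norm{w}_{W^{1,p}(\Omega,\R^2)} < \infty$.

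The argument is essentially routine, the single useful idea being the $1$-Lipschitz property of $\Phi$. The only point requiring a little care is the case $s=1$, where the weak differentiability of $\Phi\circ u$ and the chain rule must be justified; this is standard for compositions with maps that are Lipschitz together with their derivatives, which $\Phi$ certainly is.
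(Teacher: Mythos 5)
Your proof is correct and rests on exactly the same idea as the paper's one-line argument, namely the $1$-Lipschitz estimate $\norm{\e^{\i a}-\e^{\i b}}\leq\abs{a-b}$, which you then integrate against the Gagliardo kernel. The additional details you supply (measurability, the $L^p$ bound via $\norm{w(x)}_{\R^2}=1$, and the chain-rule treatment of the case $s=1$) are all accurate elaborations of what the paper leaves implicit.
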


\begin{proof}
	This follows directly from the inequality $\|\e^{ia}-\e^{ib}\| \leq \|a-b\|$ for all $a,b \in \R$.
\end{proof}
Below we show that $\Reg_{[\dS]}^{\Phi}$ is finite on $\Wsp[\R]$.
\begin{lemma} \label{lem: liftedRegularizer}
	$\Reg_{[\dS]}^{\Phi}$ maps $\Wsp[\R]$ into $[0,\infty)$ (i.e. does not attain the value $+\infty$).
\end{lemma}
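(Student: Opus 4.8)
The plan is to bound the integrand of $\Reg_{[\dS]}^{\Phi}$ pointwise by the Euclidean double-integral kernel of the lifting $u$, and then to invoke the finiteness estimate already carried out in the proof of \autoref{pr:ExprIsOp}. Thus the lemma reduces to the scalar statement that the Euclidean double integral of $u$ is finite on $\Wsp[\R]$, which is precisely what the second part of \autoref{pr:ExprIsOp} delivers (with $K_1 = \R$, $M_1 = 1$ and $d_1$ the Euclidean distance, which is trivially equivalent to itself).

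First I would establish the key pointwise estimate. By \autoref{le:1} the geodesic distance $\dS$ is equivalent to $\d_{\R^2}|_{\sphere \times \sphere}$, so there is a constant $C > 0$ with $\dS(a,b) \leq C \norm{a-b}_{\R^2}$ for all $a,b \in \sphere$. Combining this with the elementary inequality $\norm{\e^{\i a} - \e^{\i b}}_{\R^2} \leq \abs{a-b}$ for $a,b \in \R$ used in the proof of \autoref{lem:2}, and writing $\Phi(u) = \e^{\i u}$, I obtain for almost every $(x,y) \in \Omega \times \Omega$
\begin{equation*}
\dS\big(\Phi(u)(x), \Phi(u)(y)\big) \leq C \norm{\e^{\i u(x)} - \e^{\i u(y)}}_{\R^2} \leq C \abs{u(x) - u(y)}.
\end{equation*}
Raising to the $p$-th power and inserting into \autoref{eq:sobolev_alternative} gives
\begin{equation*}
\Reg_{[\dS]}^{\Phi}(u) \leq C^p \int\limits_{\Omega\times \Omega} \frac{\abs{u(x)-u(y)}^p}{\normN[x-y]^{k+ps}} \rho^l(x-y) \dxy,
\end{equation*}
so it only remains to show that this scalar double integral is finite for $u \in \Wsp[\R]$.

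For this I would repeat verbatim the computation in the proof of \autoref{pr:ExprIsOp}. For $0 < s < 1$ one splits $\Omega \times \Omega$ into $\set{\normN[x-y] < 1}$ and $\set{\normN[x-y] \geq 1}$. On the first region the hypothesis $k \leq N$ yields $\normN[x-y]^{-(k+ps)} \leq \normN[x-y]^{-(N+ps)}$, so this part is controlled by $\norm{\rho^l}_{\infty} \abs{u}_{\Wsp[\R]}^p < \infty$; on the second region the kernel is bounded, $\abs{u(x)-u(y)}^p \leq 2^{p-1}(\abs{u(x)}^p + \abs{u(y)}^p)$, and one is left with a multiple of $\norm{u}_{L^p(\Omega,\R)}^p < \infty$. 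Adding the two contributions proves $\Reg_{[\dS]}^{\Phi}(u) < \infty$. The only genuinely delicate point is the integrability near the diagonal $x = y$, where the kernel $\normN[x-y]^{-(k+ps)}$ is singular; this is exactly where $k \leq N$ is used, dominating the singular part by the finite fractional seminorm $\abs{u}_{\Wsp[\R]}^p$. (In the borderline case $s = 1$ one argues analogously, using that $u \in W^{1,p}(\Omega,\R)$, resp. $\BV[\R]$, already makes the corresponding Euclidean double integral finite via \cite{BouBreMir01,Pon04b}, as recorded in \autoref{pr:ExprIsOp}.)
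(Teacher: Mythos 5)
Your proposal is correct and uses exactly the same ingredients as the paper's proof: the equivalence of $\dS$ with the Euclidean distance (\autoref{le:1}), the Lipschitz bound $\norm{\e^{\i a}-\e^{\i b}}\leq\abs{a-b}$ underlying \autoref{lem:2}, and the finiteness computation of \autoref{pr:ExprIsOp}. The paper merely packages these differently --- it first concludes $\Phi(u)\in\Wsp[\sphere]$ and then applies \autoref{pr:ExprIsOp} to $\Phi(u)$ with the metric $\dS$, whereas you inline the two inequalities and run the same splitting argument on the scalar lifting $u$ directly; this is a presentational difference only.
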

\begin{proof}
	Let $u \in \Wsp[\R]$. Then by Lemma \ref{lem:2} we have that $\Phi(u) \in \Wsp[\sphere]$. Therefore, from Lemma \ref{le:1} and \autoref{pr:ExprIsOp} \autoref{itm: ExpIsOp} it follws that $\Reg_{[\dS]}(\Phi(u))< \infty$. Hence, by definition, $\Reg_{[\dS]}^{\Phi}(u) < \infty$.  
\end{proof}

\subsection{Setting of numerical examples}
In all numerical examples presented we use a simplified setting with
\begin{equation*}
 M_1 = M_2 \eqdef M,\;K_1 = K_2 \eqdef \sphere,\;p_1 = p_2 \eqdef p,\;k = N,\;l = 1,
\end{equation*}
$\Omega_1 = \Omega_2 \eqdef \Omega$ when considering image denoising,
$\Omega_1 = \Omega$, $\Omega_2 = \Omega \setminus D$ when considering image inpainting,
and
\begin{equation*}
W(\Omega,\sphere) = \Wsp[\sphere].
\end{equation*}
As particular mollifier we use $\rho_\ve$ (see \autoref{ex:mol}), which is defined via the one-dimensional 
normal-distribution $ \hat{\rho}(x) = \frac{1}{\sqrt{\pi}} \e^{-x^2}.$

\subsection*{Regularization functionals}
Let $\Reg_{[\dS]}$ and $\Reg_{[\dS]}^{\Phi}$ be as defined in \autoref{eq:considered} and \autoref{eq:sobolev_alternative}, respectively.
In what follows we consider the following regularization functional 
\begin{equation}
\label{eq:reg_numerics}
\F<v^\delta><\alpha>[\dS](w) \defeq \int\limits_\Omega \dS^p(\op[w](x), v^\delta(x)) \dx + \alpha \Reg_{[\dS]}(w),
\end{equation}
on $\Wsp[\sphere]$ and the lifted variant
\begin{equation} \label{eq: functionalAlternative}
\FT<v^\delta><\alpha>[\dS](u) \defeq 
\int\limits_\Omega \dS^p(\op[\Phi(u)](x), v^\delta(x)) \dx + \alpha \Reg_{[\dS]}^{\Phi}(u) 
\end{equation}
over the space $\Wsp[\R]$ (as in \autoref{ss: spheredata}), where $\Phi$ is defined as in \eqref{eq:id_sphere_w}.
Note that $\FT = \F \circ \Phi$.

\begin{lemma}\label{lem: liftedFunctional}
Let $\emptyset \neq \Omega \subset \R$ or $\R^2$ be a bounded and simply connected open set with Lipschitz boundary. 
Let $1 < p < \infty$ and $s \in (0,1)$. If $N=2$ assume that $sp < 1$ or $sp \geq 2$. Moreover, let \autoref{as:Setting} and \autoref{ass:2} be satisfied. Then the mapping 
$\FT<v^\delta><\alpha>[\dS]: W^{s,p}(\Omega, \R) \rarr [0,\infty)$ attains a minimizer.
\end{lemma}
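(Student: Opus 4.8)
The plan is to avoid attacking $\FT<v^\delta><\alpha>[\dS]$ directly on the linear space $\Wsp[\R]$ by the Direct Method. This would be delicate, because $\FT = \F \circ \Phi$ and $\Phi(u + 2\pi) = \Phi(u)$ make $\FT$ invariant under adding the constant $2\pi$; hence $\FT$ is not coercive (the sequence $u + 2\pi n$ leaves every ball of $\Wsp[\R]$ while keeping $\FT$ constant) and minimizing sequences need not be bounded. Instead I would produce a minimizer of the $\sphere$-valued functional $\F<v^\delta><\alpha>[\dS]$ and pull it back through the lifting $\Phi$.

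First I would apply the existence theorem on the sphere. In the present setting $K_1 = K_2 = \sphere$ and $d_1 = d_2 = \dS$; by \autoref{le:1} the metric $\dS$ is equivalent to the Euclidean distance restricted to $\sphere$, so the metric requirements of \autoref{ass:1} are met and (together with the assumed \autoref{as:Setting} and \autoref{ass:2}) the hypotheses of \autoref{thm:F_dK_has_a_minimizer} hold. That theorem then yields a minimizer $w_* \in \Wsp[\sphere]$ of $\F<v^\delta><\alpha>[\dS] : \Wsp[\sphere] \rarr [0,\infty]$.

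Next I would lift $w_*$ and verify it transfers. Since either $\Omega \subset \R$, or $\Omega \subset \R^2$ with the standing restriction $sp < 1$ or $sp \geq 2$, \autoref{lem: lifting} applies and furnishes $u_* \in \Wsp[\R]$ with $\Phi(u_*) = w_*$. For an arbitrary $u \in \Wsp[\R]$, \autoref{lem:2} gives $\Phi(u) \in \Wsp[\sphere]$, whence, using $\FT = \F \circ \Phi$ and minimality of $w_*$,
\begin{equation*}
  \FT<v^\delta><\alpha>[\dS](u) = \F<v^\delta><\alpha>[\dS](\Phi(u)) \geq \F<v^\delta><\alpha>[\dS](w_*) = \F<v^\delta><\alpha>[\dS](\Phi(u_*)) = \FT<v^\delta><\alpha>[\dS](u_*),
\end{equation*}
so $u_*$ is a global minimizer. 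That the functional maps into $[0,\infty)$ and not merely $[0,\infty]$ follows since the fidelity term is bounded ($\dS \leq \pi$, hence $\int_\Omega \dS^p(\cdot,\cdot)\dx \leq \pi^p \abs{\Omega} < \infty$) and $\Reg_{[\dS]}^{\Phi}(u) < \infty$ by \autoref{lem: liftedRegularizer}.

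I expect the transfer argument itself to be essentially two lines; the genuine content sits in the availability of the lifting $w_* = \Phi(u_*)$, and this is exactly where the exponent restriction is used. For $N = 2$ and $sp \in [1,2)$ the lifting may fail (see \autoref{lem: lifting}) and the whole reduction breaks down, which is why those exponents are excluded. The remaining thing to double-check is that the ingredients demanded by \autoref{thm:F_dK_has_a_minimizer} really survive the specialization to $\sphere$ — in particular the sequential continuity of $\op$ and the sequential pre-compactness of the level sets of $\F<v^0><\alpha>[\dS]$, both guaranteed by \autoref{as:Setting}, as well as $\mebr[v^\delta][v^0]_{[\dS]} < \infty$, which the equivalence in \autoref{le:1} secures.
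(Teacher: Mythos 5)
Your proposal is correct and follows essentially the same route as the paper: obtain a minimizer $w_*$ of $\F<v^\delta><\alpha>[\dS]$ on $\Wsp[\sphere]$ from the existence theorem under \autoref{as:Setting}, lift it to $u_*\in\Wsp[\R]$ via \autoref{lem: lifting}, and transfer minimality through $\FT=\F\circ\Phi$ using \autoref{lem:2}, with finiteness handled as in \autoref{lem: liftedRegularizer}. Your added observations (the $2\pi$-invariance obstructing a direct coercivity argument on $\Wsp[\R]$, and the explicit bound $\dS\leq\pi$ for the fidelity term) are sound refinements of the same argument.
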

\begin{proof}
	Let $u \in \Wsp[\R]$. Then by Lemma \ref{lem:2} we have that $w \defeq \Phi(u) \in \Wsp[\sphere]$.
	As arguing as in the proof of Lemma \ref{lem: liftedRegularizer} we see that $\FT<v^\delta><\alpha>[\dS](u) < \infty$. \\
	Since we assume that \autoref{as:Setting} is satisfied we get that $\F<v^\delta><\alpha>[\dS](w)$ attains a minimizer $w^* \in \Wsp[\sphere]$.
	It follows from \autoref{lem: lifting} that there exists a function $u^* \in W^{s,p}(\Omega, \R)$ that can be lifted to $w^*$, i.e. $w^* = \Phi(u^*)$.
	Then $u^*$ is a minimizer of \eqref{eq: functionalAlternative} 
	by definition of $\FT$ and $\Phi$.
\end{proof}

\subsection{Numerical minimization}
In our concrete examples we will consider two different operators $\op$. 
For numerical minimization we consider the functional from \autoref{eq: functionalAlternative} 
in a discretized setting. For this purpose we approximate the functions $u \in W^{s, p}(\Omega,\R)$, $0<s<1,1<p<\infty$ by quadratic B-Spline 
functions and optimize with respect to the coefficients. 
We remark that this approximation 
is continuous and thus that sharp edges correspond to very steep slopes. \\ 
The noisy data $u^\delta$ is obtained by adding Gaussian white noise with variance $\sigma^2$ to the approximation 
or the discretized approximation of $u$. 

We apply a simple Gradient Descent scheme with fixed step length implemented in $\mathrm{MATLAB}$.

\subsection{Denoising of $\sphere$-valued functions - The InSAR problem} \label{ss: denoising}
In this case the operator $\op: \Wsp[\sphere] \rarr \Lp[\sphere]$ is the inclusion operator. It is norm-coercive in the sense of \autoref{eq:coercive_F} and hence \autoref{as:Setting} is fulfilled. 
For 
$\emptyset \neq \Omega \subset \R$ or $\R^2$ a bounded and simply connected open set, $1 < p < \infty$ and $s \in (0,1)$ such that additionally $sp < 1$ or $sp \geq 2$ if $N=2$ we can apply \autoref{lem: liftedFunctional} which ensures that the lifted functional 
$\FT<u^\delta><\alpha>[\dS]: \Wsp[\R] \rarr [0,\infty)$ 
attains a minimizer $u \in W^{s, p}(\Omega,\R)$. 

In the examples we will just consider the continuous approximation again denoted by $u$.

\subsection*{One dimensional test case}
Let $\Omega = (0,1)$ and consider the signal 
$u:\Omega \rarr [0,2\pi)$ representing the angle of a cyclic signal. \\
For the discrete approximation shown in \autoref{sfig:signal1-a} the domain $\Omega$ is sampled equally at 100 points.    
$u$ is affected by an additive white Gaussian noise with $\sigma = 0.1$ to obtain the noisy signal which is colored in blue in \autoref{sfig:signal1-a}.
 
In this experiment we show the influence of the parameters $s$ and $p$.
In all cases the choice of the regularization parameter $\alpha$ is 0.19 and $\varepsilon = 0.01$.\\
The red signal in \autoref{sfig:signal1-b} is obtained by choosing 
$s = 0.1$ and $p = 1.1$. 
We see that the periodicity of the signal is handled correctly and that there is nearly no staircasing.
In \autoref{sfig:signal1-c} the parameter $s$ is changed from $0.1$ to $0.6$. The value of the parameter $p$ stays fixed. 
Increasing of $s$ leads the signal to be more smooth. We can observe an even stronger similar effect when increasing $p$ 
(here from $1.1$ to $2$) and letting $s$ fixed, see \autoref{sfig:signal1-d}. This fits the expectation since $s$ only 
appears once in the denominator of the regularizer. At a jump increasing of $s$ leads thus to an increasing of the 
regularization term. The parameter $p$ appears twice in the regularizer. Huge jumps are hence weighted even more.

\begin{figure}[!h]
	\centering
	\begin{subfigure}[h]{0.35\linewidth}
		\includegraphics[width=1\linewidth]{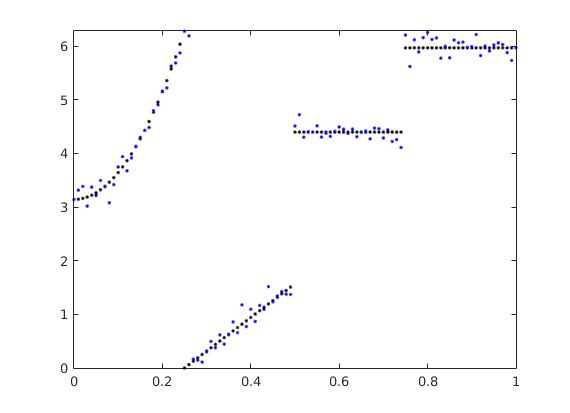}
		\caption{Original and noisy data}
		\label{sfig:signal1-a}
	\end{subfigure}
    \begin{subfigure}[h]{0.35\linewidth}
    	\includegraphics[width=1\linewidth]{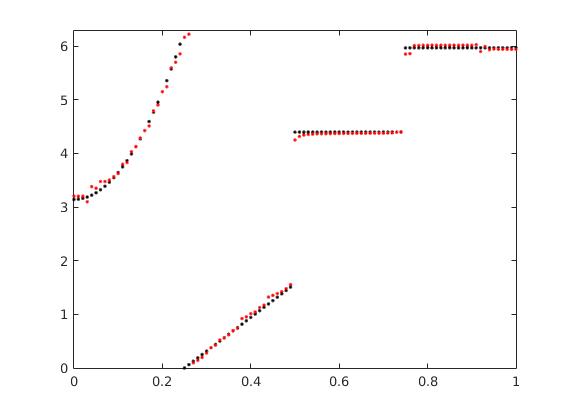}
    	\caption{Denoised data}
    	\label{sfig:signal1-b}
    \end{subfigure}	

	\begin{subfigure}[h]{0.35\linewidth}
		\includegraphics[width=1\linewidth]{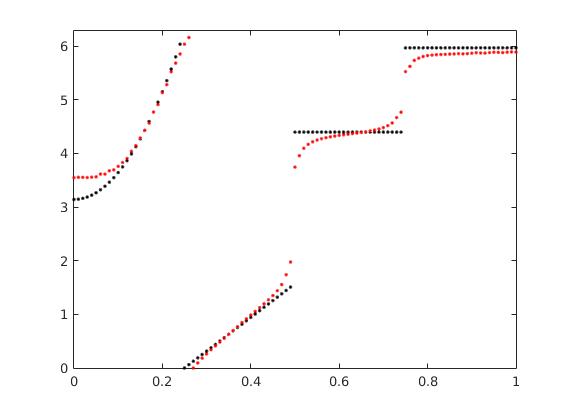}
		\caption{Increasing of $s$}
		\label{sfig:signal1-c}
	\end{subfigure}
    \begin{subfigure}[h]{0.35\linewidth}
    	\includegraphics[width=1\linewidth]{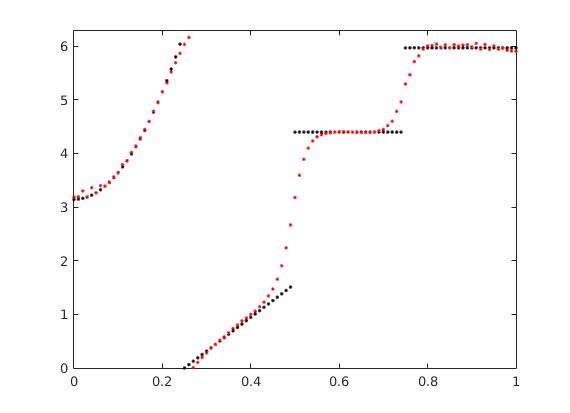}
    	\caption{Increasing of $p$}
    	\label{sfig:signal1-d}
    \end{subfigure}
	
\caption{Function on $\sphere$ represented in $[0,2\pi)$: Left to right, top to bottom: Original data (black) and noisy data (blue) 
with 100 data points. Denoised data (red) where we chose $s=0.1, p=1.1, \alpha = 0.19$. Denoised data with 
$s=0.6, p=1.1, \alpha = 0.19$ resp. $s=0.1, p=2, \alpha=0.19$. }
\label{fig:signal1}
\end{figure}

In \autoref{sfig:signal2-a} we considered a simple signal with a single huge jump. Again it is described by the angular value. 
We proceeded as above to obtain the approximated discrete original data (black) and noisy signal with $\sigma = 0.1$ (blue). We 
chose again $\varepsilon = 0.01$. \\ 
As we have seen above increasing of $s$ leads to a more smooth signal. 
This effect can be compensated by choosing a rather small value of $p$, i.e. $p \approx 1$. In \autoref{sfig:signal2-b} the value of $s$ is $0.9$. We see that it is still possible to reconstruct jumps by choosing e.g. $p=1.01$. \\
Moreover, we have seen that increasing of $p$ leads to an even more smooth signal. In \autoref{sfig:signal2-c} we choose a quite large value of $p$,  
$p=2$ and a 
rather small value of $s$, $s = 0.001$. Even for this very simple signal is was not possible to get sharp edges. This is due to the fact that the parameter $p$ (but not $s$) additionally weights the height of jumps in the regularizing term.

\begin{figure}[!h]
	\centering
	\begin{subfigure}[h]{0.3\linewidth}
	\includegraphics[width=1\linewidth]{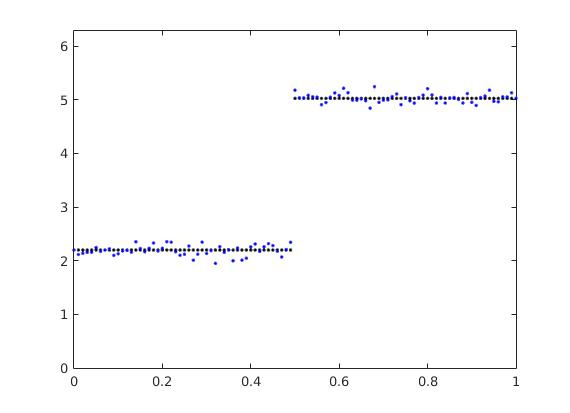}
	\caption{Original and noisy data}
	\label{sfig:signal2-a}
	\end{subfigure}
	\begin{subfigure}[h]{0.3\linewidth}
		\includegraphics[width=1\linewidth]{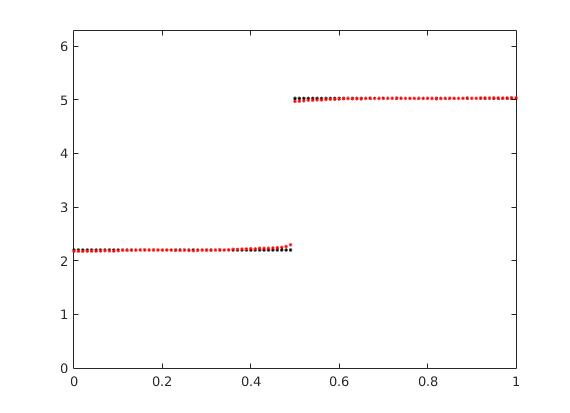}
		\caption{$s = 0.9, \ p = 1.01$}
		\label{sfig:signal2-b}
	\end{subfigure}
    \begin{subfigure}[h]{0.3\linewidth}
    	\includegraphics[width=1\linewidth]{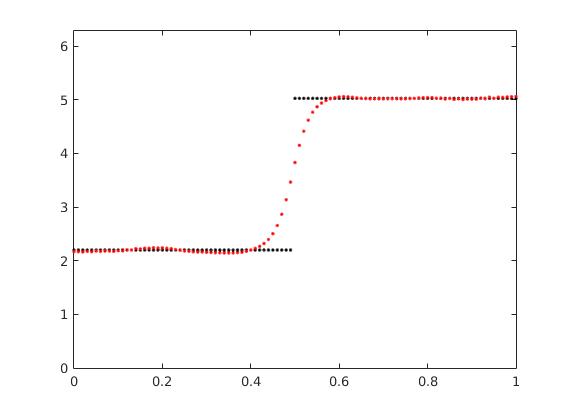}
    	\caption{$s = 0.001, \ p = 2$}
    	\label{sfig:signal2-c}
    \end{subfigure}
    \caption{Left to right: Original data (black) and noisy data (blue) sampled at 100 data points. Denoised data (red) where we chose $s=0.9, p=1.01, \alpha = 0.03$. Denoised data with $s=0.001, p=2, \alpha = 0.9$.}
	\label{fig:signal2}
\end{figure}

\subsection*{Denoising of a $\sphere$-Valued Image}
Our next example concerned a two-dimensional $\sphere$-valued image represented by the corresponding angular values.
We remark that in this case where $N=2$ the existence of such a representation is always guaranteed in the cases where 
$sp < 1$ or $sp \geq 2$, see \autoref{lem: lifting}. 

The domain $\Omega$ is sampled into $60 \times 60$ data points and can be considered as discrete grid, 
$\{1, \dots,60\} \times \{1, \dots,60\} $. 
The B-Spline approximation evaluated at that grid is given by
\begin{equation*}
u(i,j) = u(i,0) \defeq 4\pi \frac{i}{60} \bmod 2\pi, \quad i,j \in \{1, \dots,60\}.
\end{equation*}

\begin{figure} 
	\centering
	{\includegraphics[width=6cm]{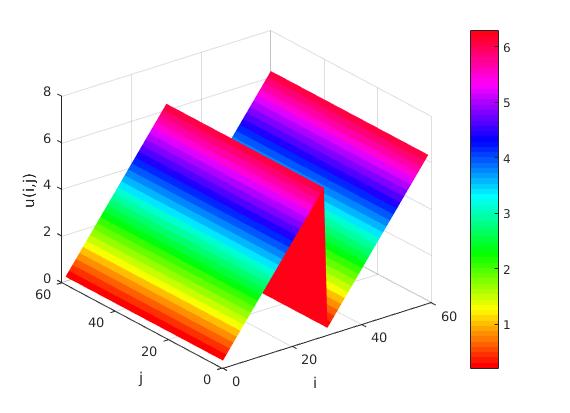}}
	\caption{The function $u$ evaluated on the discrete grid.}
	\label{fig: Rainbow_function}
\end{figure}
The function $u$ is shown in \autoref{fig: Rainbow_function}. We used the $\mathrm{hsv}$ colormap 
provided in $\mathrm{MATLAB}$ transferred to the interval $[0, 2\pi]$. 

This experiment shows the difference of our regularizer respecting the periodicity of the data in contrast to the classical 
Total Variation regularizer. The classical TV-minimization is solved using a fixed point iteration (\cite{LoeMag}); for the method see also \cite{VogOma96}.

In \autoref{sfig:rainbow-a} the function $u$ can be seen from the top, i.e. the axes correspond to the 
$i$ resp. $j$ axis in \autoref{fig: Rainbow_function}.
The noisy data is obtained by adding white Gaussian noise with $\sigma = \sqrt{0.001}$ using the built-in function 
$\mathtt{imnoise}$ in $\mathrm{MATLAB}$. It is shown in \autoref{sfig:rainbow-b}.
We choose as parameters $s=0.9, \ p=1.1, \ \alpha = 1,$ and $\varepsilon = 0.01$.  
We observe significant noise reduction in both cases. However, only in \autoref{sfig:rainbow-d} the color transitions are handled 
correctly. This is due to the fact, that our regularizer respects the periodicity, i.e. for the functional there is no jump in 
\autoref{fig: Rainbow_function} since 0 and $2\pi$ are identified. Using the classical TV regularizer the values 0 and $2\pi$ are not 
identified and have a distance of $2\pi$. Hence, in the TV-denoised image there is a sharp edge in the middle of the image, see 
\autoref{sfig:rainbow-c}. \\

\begin{figure}[!h]
	\centering
	\begin{subfigure}[h]{0.35\linewidth}
		\includegraphics[width=1\linewidth]{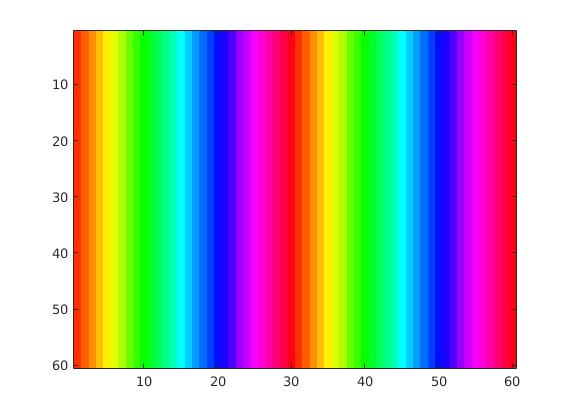}
		\caption{Original data}
		\label{sfig:rainbow-a}
	\end{subfigure}
	\begin{subfigure}[h]{0.35\linewidth}
		\includegraphics[width=1\linewidth]{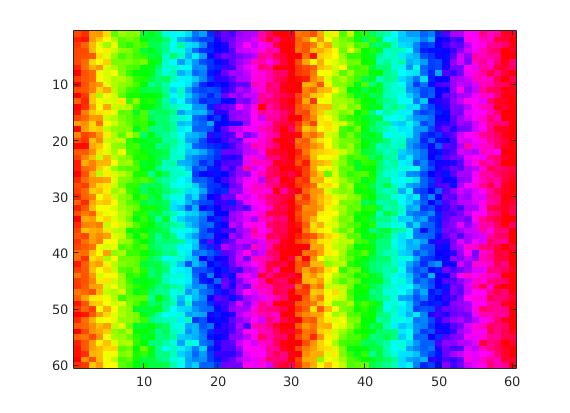}
		\caption{Noisy data}
		\label{sfig:rainbow-b}
	\end{subfigure}	
	
	\begin{subfigure}[h]{0.35\linewidth}
		\includegraphics[width=1\linewidth]{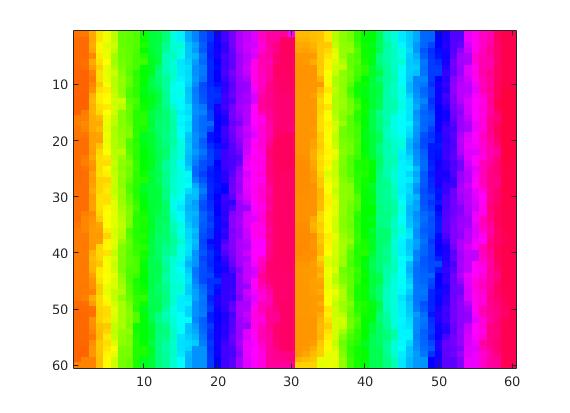}
		\caption{TV-denoised data}
		\label{sfig:rainbow-c}
	\end{subfigure}
	\begin{subfigure}[h]{0.35\linewidth}
		\includegraphics[width=1\linewidth]{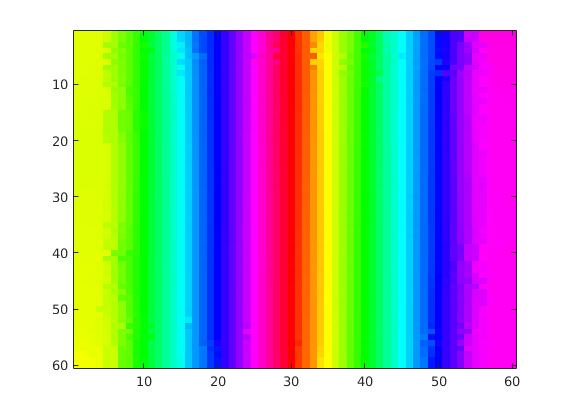}
		\caption{Denoised data}
		\label{sfig:rainbow-d}
	\end{subfigure}
	
	\caption{Left to right, top to bottom: Original and noisy data of an $60 \times 60$ image. TV-denoised data using a fixed point iteration method. Denoised data where we chose $s=0.9, p=1.1, \alpha = 1$, 400 steps. }
	\label{fig:rainbow}
\end{figure}

\subsection*{Hue Denoising}

The $\mathrm{HSV}$ color space is shorthand for Hue, Saturation, Value (of brightness). 
The hue value of a color image is $\sphere$-valued, while saturation and value of brightness are real-valued. 
Representing colors in this space better match the human perception than representing colors in 
the RGB space. 

In \autoref{sfig:fruits-a} we see a part of size $70 \times 70$ of the RGB image ``fruits''  
(\url{https://homepages.cae.wisc.edu/~ece533/images/}).
     
The corresponding hue data is shown in \autoref{sfig:fruits-b}, where we used again the colormap hsv, cf. \autoref{fig: Rainbow_function}. Each pixel-value lies, after transformation, in the interval $[0, 2\pi)$ and represents the angular value. Gaussian white noise with $\sigma = \sqrt{0.001}$ is added to obtain a noisy image, see \autoref{sfig:fruits-c}.\\
To obtain the denoised image \autoref{sfig:fruits-d} we again used the same fixed point iteration, cf. \cite{LoeMag}, as before. 

We see that the denoised image suffers from artifacts due to the non-consideration of periodicity. The pixel-values in the middle of the apple (the red object in the original image) are close to $2\pi$ while those close to the border are nearly 0, meaning they have a distance of around $2\pi$. \\
We use this TV-denoised image as starting image to perform the minimization of our energy functional. As parameters we choose 
$s = 0.49, \ p = 2, \ \alpha = 2, \ \varepsilon = 0.006$. 

Since the cyclic structure is respected the disturbing artifacts in image \autoref{sfig:fruits-d} are removed correctly. The edges are smoothed due to the high value of $p$, see \autoref{sfig:fruits-e}.

\begin{figure}[!h]
	\centering
	\begin{subfigure}[h]{0.3\linewidth}
		\includegraphics[width=1\linewidth]{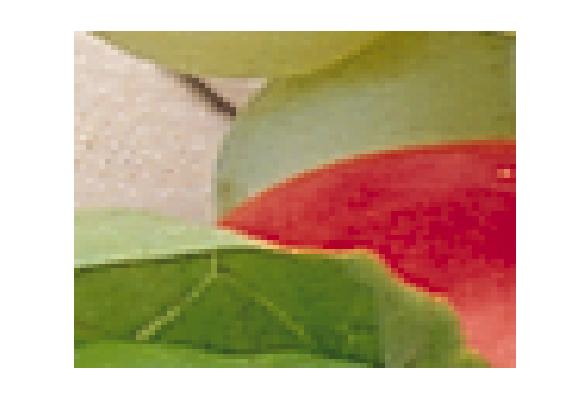}
		\caption{Original RGB image \newline \qquad \newline}
		\label{sfig:fruits-a}
	\end{subfigure}
    \hspace{1em}
	\begin{subfigure}[h]{0.3\linewidth}
		\includegraphics[width=1\linewidth]{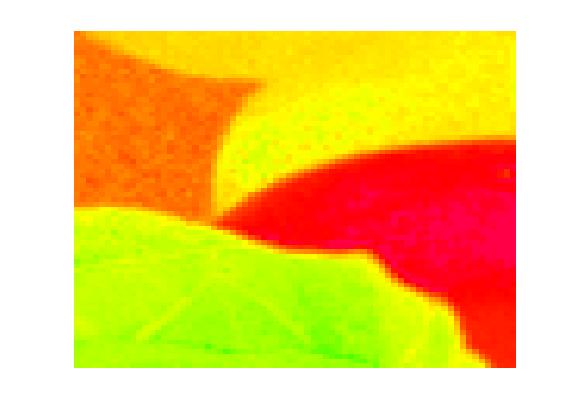}
		\caption{Hue component represented in color, which represent function values on $\sphere$}
		\label{sfig:fruits-b}
	\end{subfigure}	
    \hspace{1em}	
	\begin{subfigure}[h]{0.3\linewidth}
		\includegraphics[width=1\linewidth]{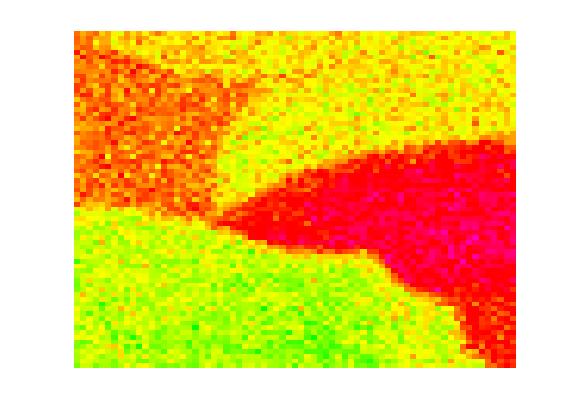}
		\caption{Noisy hue value - again representing function values on $\sphere$ \newline}
		\label{sfig:fruits-c}
	\end{subfigure}

	\begin{subfigure}[h]{0.3\linewidth}
		\includegraphics[width=1\linewidth]{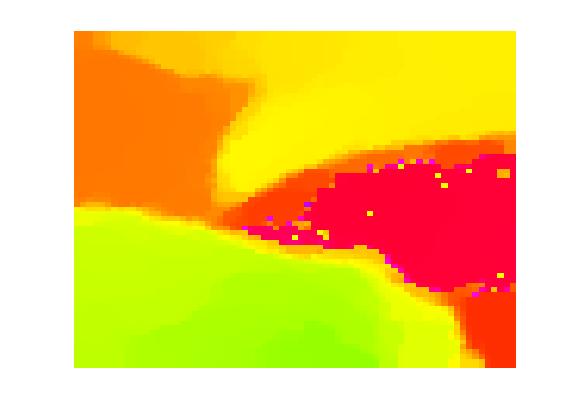}
		\caption{TV-denoised data}
		\label{sfig:fruits-d}
	\end{subfigure}
    \hspace{1em}
    \begin{subfigure}[h]{0.3\linewidth}
    	\includegraphics[width=1\linewidth]{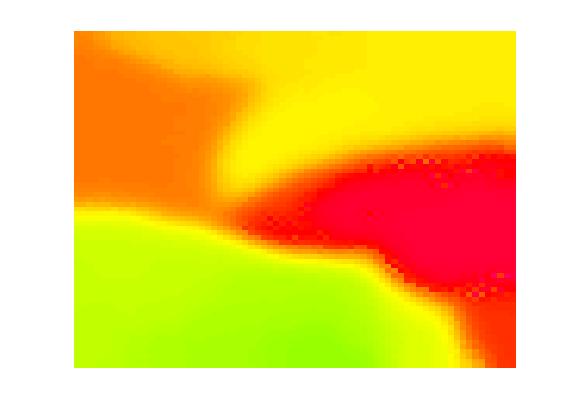}
    	\caption{Denoised data}
    	\label{sfig:fruits-e}
    \end{subfigure}
	
	\caption{Left to right, top to bottom: Original RGB image and its Hue component. Noisy Hue data with $\sigma^2 = 0.001$. TV minimization is done using an iterative approach. It is serving as starting point for the GD minimization. Denoised data with $s=0.49, p=2, \alpha = 2$, 500 steps. }
	\label{fig:fruits}
\end{figure}

\subsection{$\sphere$-Valued Image Inpainting}
In this case the operator $\op: \Wsp[\sphere] \rarr \Lp[\sphere]$ is the inpainting operator, i.e. 
  \begin{equation*}
    \op(w) = \chi_{\Omega \backslash D} (w),
  \end{equation*}  
where $D \subseteq \Omega$ is the area to be inpainted. 

We consider the functional 
\begin{equation*}
\F<v^\delta><\alpha>[\dS](w) \defeq \int\limits_{\Omega \setminus D} \dS^p(w(x), v^\delta(x)) \dx + 
\alpha \int\limits_{\Omega\times \Omega} \frac{\dS^p(w(x), w(y))}{\norm{x-y}_{\R^2}^{2+ps}} \rho_{\ve}(x-y) \dxy,
\end{equation*}
on $\Wsp[\sphere]$.

According to \autoref{ex:in} the functional $\F$ is coercive and 
\autoref{as:Setting} is satisfied.
For $\emptyset \neq \Omega \subset \R$ or $\R^2$ a bounded and simply connected open set, $1 < p < \infty$ and $s \in (0,1)$ such that additionally $sp < 1$ or $sp \geq 2$ if $N=2$ \autoref{lem: liftedFunctional} applies which ensures that there exists a minimizer $u \in W^{s, p}(\Omega,\R)$ of the lifted functional 
$\FT<u^\delta><\alpha>[\dS]: \Wsp[\R] \rarr [0,\infty)$ 
$u \in W^{s, p}(\Omega,\R)$

\subsection*{Inpainting of a $\sphere$-Valued Image}

As a first inpainting test-example we consider two $\sphere$-valued images of size $28 \times 28$, 
see \autoref{fig:blocks}, represented by its angular values. 
In both cases the ground truth can be seen in \autoref{sfig:blocks-a} and \autoref{sfig:blocks2-a}. 
We added Gaussian white noise with $\sigma = \sqrt{0.001}$ using the $\mathrm{MATLAB}$ build-in function 
$\mathtt{imnoise}$. The noisy images can be seen in \autoref{sfig:blocks-b} and \autoref{sfig:blocks2-b}. 
The region $D$ consists of the nine red squares in \autoref{sfig:blocks-c} and \autoref{sfig:blocks2-c}.

The reconstructed data are shown in \autoref{sfig:blocks-d} and \autoref{sfig:blocks2-d}. \\
For the two-colored image we used as parameters $\alpha = s = 0.3$, $p = 1.01$ and $\varepsilon = 0.05$.
We see that the reconstructed edge appears sharp. 
The unknown squares, which are completely surrounded by one color are inpainted perfectly.
The blue and green color changed slightly.

As parameters for the three-colored image we used $\alpha = s = 0.4$, $p=1.01$ and $\varepsilon = 0.05$. 
Here again the unknown regions lying entirely in one color are inpainted perfectly. The edges are preserved. 
Just the corner in the middle of the image is slightly smoothed. \\
In \autoref{sfig:blocks-e} and \autoref{sfig:blocks2-e} the TV-reconstructed data  
is shown. The underlying algorithm (\cite{Get}) uses the split Bregman method (see \cite{GolSta09}).

In \autoref{sfig:blocks-e} the edge is not completely sharp. There are some lighter parts on the blue side. 
This can be caused by the fact that the unknown domain in this area is not exactly symmetric with respect to the edge. 
This is also the case in \autoref{sfig:blocks2-e} where we observe the same effect. Unknown squares lying entirely 
in one color are perfectly inpainted.

\begin{figure}[!h]
	\centering
	\begin{subfigure}[h]{0.3\linewidth}
		\includegraphics[width=1\linewidth]{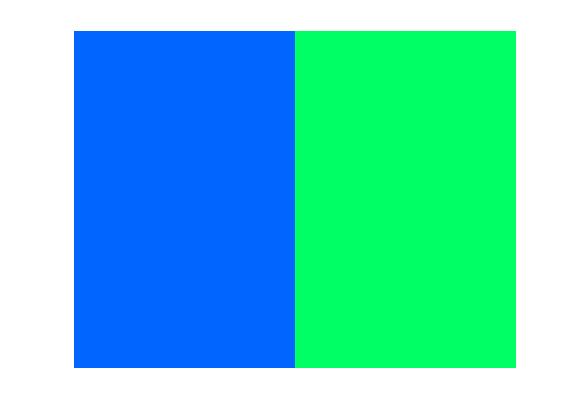}
		\caption{Original image}
		\label{sfig:blocks-a}
	\end{subfigure}
	\begin{subfigure}[h]{0.3\linewidth}
		\includegraphics[width=1\linewidth]{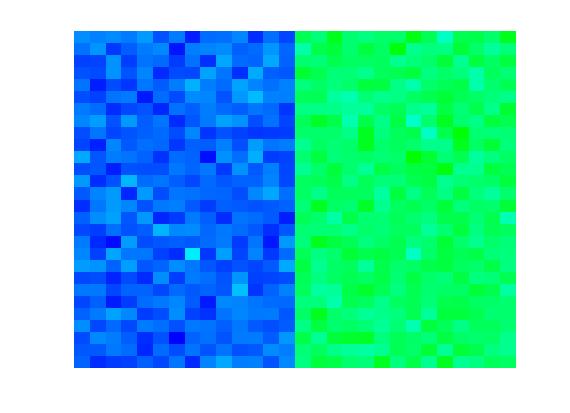}
		\caption{Noisy image}
		\label{sfig:blocks-b}
	\end{subfigure}		
	\begin{subfigure}[h]{0.3\linewidth}
		\includegraphics[width=1\linewidth]{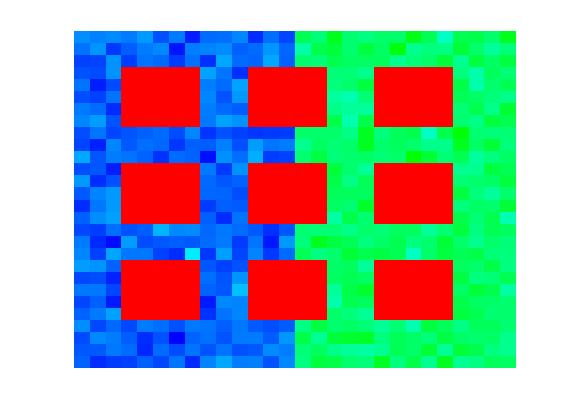}
		\caption{Noisy masked image}
		\label{sfig:blocks-c}
	\end{subfigure}
	
	\begin{subfigure}[h]{0.3\linewidth}
		\includegraphics[width=1\linewidth]{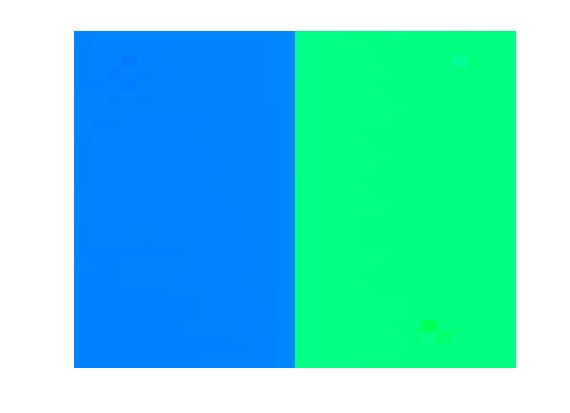}
		\caption{Reconstructed image}
		\label{sfig:blocks-d}
	\end{subfigure}
	\begin{subfigure}[h]{0.3\linewidth}
		\includegraphics[width=1\linewidth]{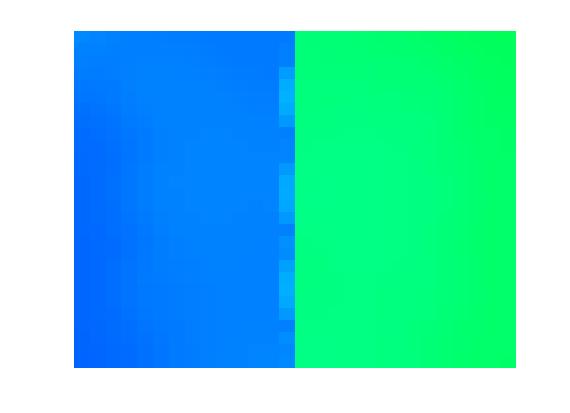}
		\caption{TV-reconstructed image}
		\label{sfig:blocks-e}
	\end{subfigure}

    \begin{subfigure}[h]{0.3\linewidth}
    	\includegraphics[width=1\linewidth]{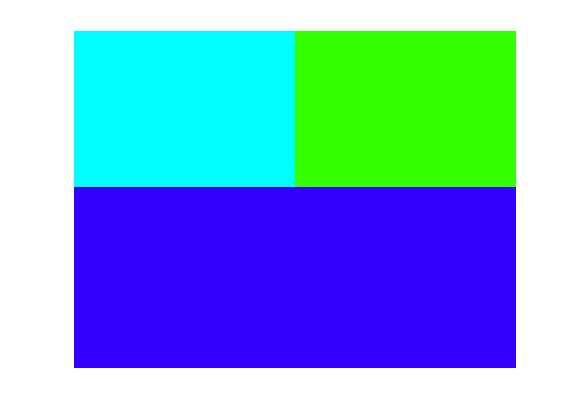}
    	\caption{Original image}
    	\label{sfig:blocks2-a}
    \end{subfigure}
    \begin{subfigure}[h]{0.3\linewidth}
    	\includegraphics[width=1\linewidth]{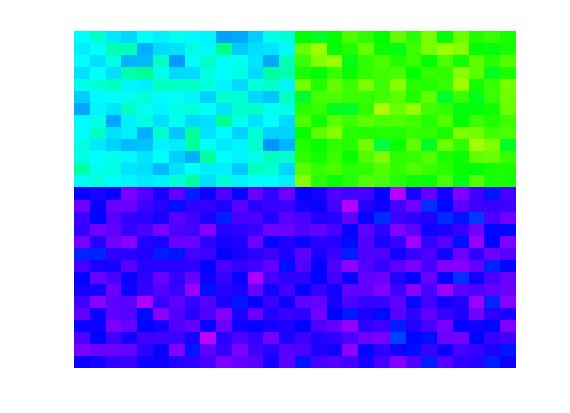}
    	\caption{Noisy image}
    	\label{sfig:blocks2-b}
    \end{subfigure}		
    \begin{subfigure}[h]{0.3\linewidth}
    	\includegraphics[width=1\linewidth]{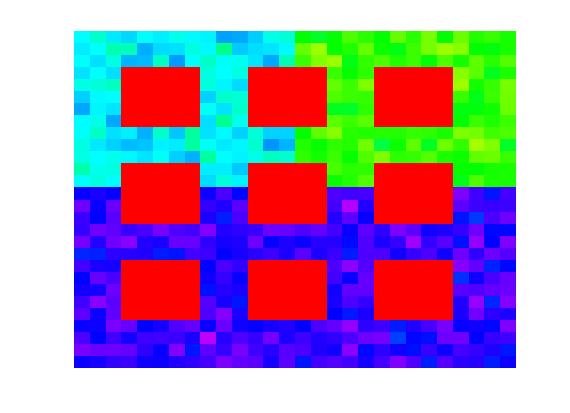}
    	\caption{Noisy masked image}
    	\label{sfig:blocks2-c}
    \end{subfigure}
    
    \begin{subfigure}[h]{0.3\linewidth}
    	\includegraphics[width=1\linewidth]{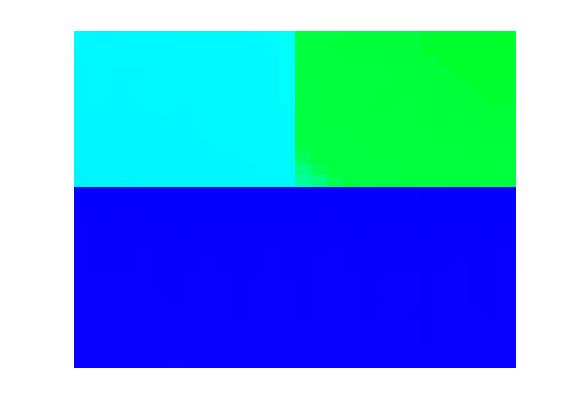}
    	\caption{Reconstructed image}
    	\label{sfig:blocks2-d}
    \end{subfigure}
    \begin{subfigure}[h]{0.3\linewidth}
    	\includegraphics[width=1\linewidth]{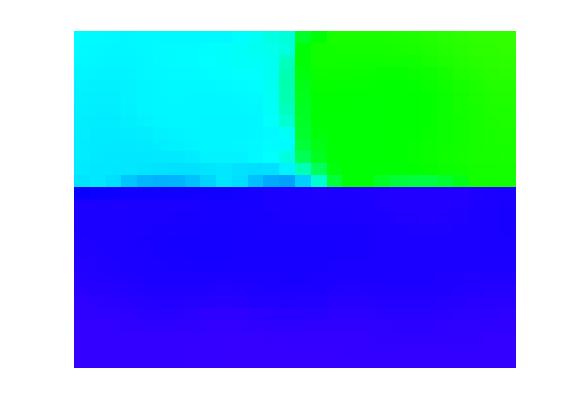}
    	\caption{TV-reconstructed image}
    	\label{sfig:blocks2-e}
    \end{subfigure}
	
	\caption{Left to right. Top to bottom: Original image and the noisy data with $\sigma^2 = 0.001$. Noisy image with masking filter and denoised data with $s=0.3, p=1.01, \alpha = 0.3$, 6000 steps. TV denoised data.\\
     Original image and the noisy data with $\sigma^2 = 0.001$. Noisy image with masking filter and denoised data with $s=0.4, p=1.01, \alpha = 0.4$, 10000 steps. TV denoised image.}
	\label{fig:blocks}
\end{figure}

\subsection*{Hue Inpainting}

As a last example we consider again the Hue-component of the image ``fruits'', see \autoref{sfig:fruits2-a}. 
The unknown region $D$ is the string $\mathit{01.01}$ which is shown in \autoref{sfig:fruits2-b}. 
As parameters we choose $p=1.1$, $s=0.1$, $\alpha= 2$ and $\ve = 0.006$. We get the reconstructed image shown in 
\autoref{sfig:fruits2-c}. The edges are preserved and the unknown area is restored quite well. This can be also observed in the 
TV reconstructed image, \autoref{sfig:fruits2-d}, using again the split Bregman method as before, cf. \cite{Get}.

\begin{figure}[!h]
	\centering
	\begin{subfigure}[h]{0.35\linewidth}
		\includegraphics[width=1\linewidth]{Images/Fruits_Orig.jpg}
		\caption{Hue component}
		\label{sfig:fruits2-a}
	\end{subfigure}		
	\begin{subfigure}[h]{0.35\linewidth}
		\includegraphics[width=1\linewidth]{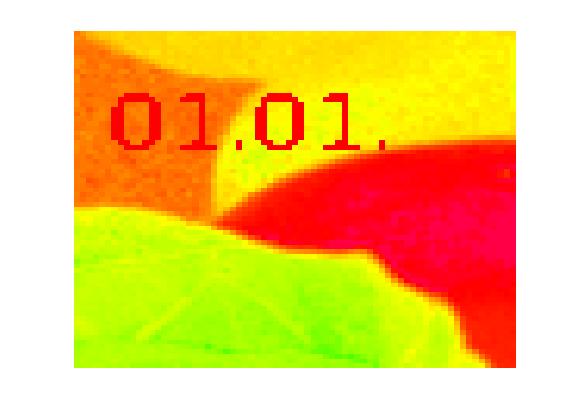}
		\caption{Image with masked region}
		\label{sfig:fruits2-b}
	\end{subfigure}
	
	\begin{subfigure}[h]{0.35\linewidth}
		\includegraphics[width=1\linewidth]{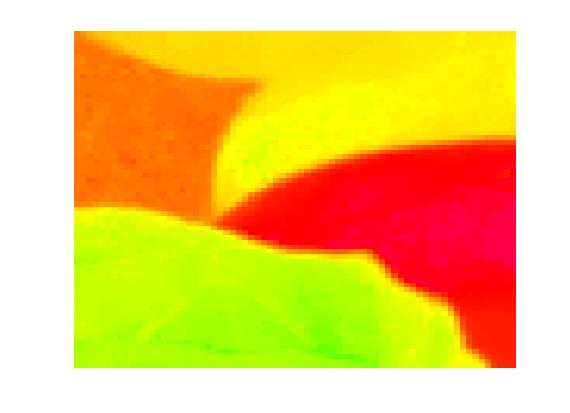}
		\caption{TV-reconstructed image}
		\label{sfig:fruits2-c}
	\end{subfigure}
	\begin{subfigure}[h]{0.35\linewidth}
		\includegraphics[width=1\linewidth]{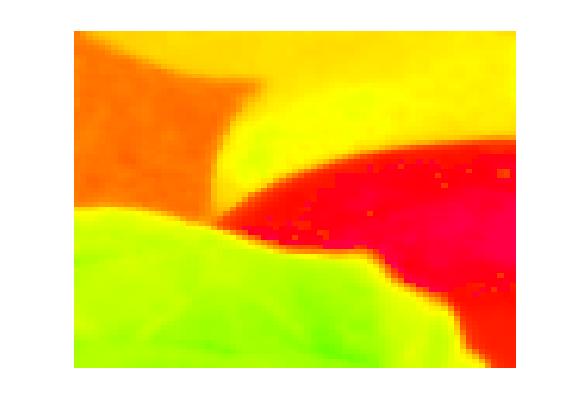}
		\caption{Reconstructed image}
		\label{sfig:fruits2-d}
	\end{subfigure}
	
	\caption{Left to right, top to bottom: Original image and image with masked region. Reconstructed image with parameters $p=1.1, \ 
	s=0.1, \ \alpha= 2$ and $\ve = 0.006$, 2000 steps. TV-reconstructed image.}
	\label{fig:fruits2}
\end{figure}

\subsection{Conclusion}
In this paper we developed a functional for regularization of functions with values in a set of vectors. The regularization functional  
is a derivative-free, nonlocal term, which is based on a characterization of Sobolev spaces of 
\emph{intensity data} derived by Bourgain, Brézis, Mironescu \& Dávila. Our objective has been 
to extend their double integral functionals in a natural way to functions with values in a set of vectors, in particular functions with 
values on an embedded manifold. These new integral representations are used for regularization on a subset 
of the (fractional) Sobolev space $W^{s,p}(\Omega, \R^M)$ and the space $BV(\Omega, \R^M)$, respectively.  
We presented numerical results for denoising of artificial InSAR data as well as an example of inpainting. 
Moreover, several conjectures are at hand on relations between double metric integral regularization 
functionals and single integral representations.

\subsection*{Acknowledgements}
We thank Peter Elbau for very helpful discussions and comments. MH and OS acknowledge support from 
the Austrian Science Fund (FWF) within the national research network Geometry and Simulation, project S11704 
(Variational Methods for Imaging on Manifolds). Moreover, OS is supported by the Austrian Science Fund (FWF), 
with SFB F68, project F6807-N36 (Tomography with Uncertainties) and I3661-N27 (Novel Error Measures and Source 
Conditions of Regularization Methods for Inverse Problems).


\section*{References}
\renewcommand{\i}{\ii}
\printbibliography[heading=none]



\end{document}